\theoremstyle{plain}
\newtheorem{thm}{Theorem}
\newtheorem{lem}[thm]{Lemma}
\newtheorem{cor}[thm]{Corollary}
\newtheorem{prop}[thm]{Proposition}
\newtheorem{defn}[thm]{Definition}
\theoremstyle{definition}
\newtheorem{ex}[thm]{Example}
\newtheorem{claim}[thm]{Claim}
\newtheorem{rmk}[thm]{Remark}
\numberwithin{thm}{section}
\newcommand{\ml}[2]{\begin{multline}\label{#1}#2 \end{multline}}
\newcommand{\ga}[2]{\begin{gather}\label{#1}#2 \end{gather}}
\newcommand{\codim}{{\rm codim}}
\newcommand{\Hom}{{\rm Hom}}
\newcommand{\im}{{\rm im}}
\newcommand{\Spec}{{\rm Spec \,}}
\newcommand{\ch}{{\rm ch}}
\newcommand{\Gal}{{\rm Gal}}
\newcommand{\sD}{{\mathcal D}}
\newcommand{\sF}{{\mathcal F}}
\newcommand{\sG}{{\mathcal G}}
\newcommand{\sH}{{\mathcal H}}
\newcommand{\sK}{{\mathcal K}}
\newcommand{\sL}{{\mathcal L}}
\newcommand{\sM}{{\mathcal M}}
\newcommand{\sO}{{\mathcal O}}
\newcommand{\sR}{{\mathcal R}}
\newcommand{\C}{{\mathbb C}}
\newcommand{\F}{{\mathbb F}}
\newcommand{\G}{{\mathbb G}}
\newcommand{\N}{{\mathbb N}}
\renewcommand{\P}{{\mathbb P}}
\newcommand{\Q}{{\mathbb Q}}
\newcommand{\R}{{\mathbb R}}
\newcommand{\Z}{{\mathbb Z}}
\newcommand{\QQl}{\overline{\mathbb Q}_\ell}
\newcommand{\ZZl}{\overline{\mathbb Z}_\ell}
\newcommand{\coh}{{\rm coh}}
\DeclareMathOperator{\FM}{FM}
\DeclareMathOperator{\Spm}{Spm}
\DeclareMathOperator{\colim}{colim}
\DeclareMathOperator{\sFM}{\mathfrak{FM}}
\DeclareMathOperator{\Frac}{Frac}
\DeclareMathOperator{\Spf}{Spf}
\DeclareMathOperator{\coker}{coker}
\begin{document}

\title[Rank one local systems]{  \'Etale cohomology of rank one $\ell$-adic local systems in positive characteristic
   }
\author{H\'el\`ene Esnault \and Moritz Kerz}
\address{Freie Universit\"at Berlin, Arnimallee 3, 14195, Berlin,  Germany}
\email{esnault@math.fu-berlin.de}
\address{   Fakult\"at f\"ur Mathematik \\
Universit\"at Regensburg \\
93040 Regensburg, Germany}
\email{moritz.kerz@mathematik.uni-regensburg.de}
\thanks{The first  author is supported by  the Berlin Einstein program,  the second author by the SFB 1085 Higher Invariants, Universit\"at Regensburg}
\subjclass{14G17,14G22}

\begin{abstract} We show that in positive characteristic special loci of deformation
  spaces of rank one $\ell$-adic local systems are quasi-linear.  From this we deduce the
  Hard Lefschetz theorem for rank one  $\ell$-adic local systems and  a
generic vanishing theorem.

\end{abstract}

\maketitle

\section{Introduction}

\noindent
In this note we study  the cohomology of  \'etale  rank one $\ell$-adic local systems on algebraic varieties in terms of $\ell$-adic
analysis applied to  the deformation space of local systems. One feature of our approach is
that we apply non-archimedean techniques (formal Lie groups, affinoid algebras etc.) that
were originally developed as tools in the study of
$p$-adic cohomology theories.
Before we explain our new non-archimedean methods in  Section~\ref{ss:MainThm} we sketch some applications in
the most simple form. The
general form  is described  in Sections~\ref{sec:HL} and~\ref{sec:jumvan}.

\subsection{Hard Lefschetz}\label{subsec:inthl}

One of the applications of our $\ell$-adic technique is a new case of the Hard Lefschetz
isomorphism in positive characteristic. Let $\ell$ be a prime number and let $X$ be a
smooth projective variety over an
algebraically closed field $F$ of characteristic different from $\ell$. Let $d$ be the dimension of $X$.
Let  $\eta\in H^2(X,\Z_\ell)$ be  a polarization in \'etale cohomology, i.e.\  the first Chern class of an ample line bundle on $X$, where we omit Tate twists as $F$ is algebraically closed.

\begin{thm}[Hard Lefschetz] \label{thm:hl1}
 Let $\sL$ be a  rank one  \'etale $\overline \Q_\ell$-local system on $X$.
    Then for any $i\in \N,$ the cup-product map
 \[
\cup \eta^i \colon H^{-i}(X,  \sL[d]) \xrightarrow{\sim}   H^{i}(X,  \sL[d])
\]
on \'etale cohomology
is an isomorphism.
\end{thm}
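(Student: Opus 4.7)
The plan is to view $\sL$ as a point of the deformation space $\sM$ of rank one $\QQl$-local systems on $X$, identify the failure of Hard Lefschetz with a closed analytic ``jumping locus'' $\sJ_i \subset \sM$, and then combine the paper's main theorem about quasi-linearity of special loci with the classical Hard Lefschetz theorem for finite \'etale covers to force $\sJ_i$ to be empty.

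First I would set up the family. The space $\sM$ parametrizes continuous characters $\rho \colon \pi_1(X)^{\mathrm{ab}} \to \QQl^\times$ and carries a natural rigid analytic (or formal) structure, with $\sL$ corresponding to a point. Over $\sM$ the \'etale cohomology of the universal rank one local system assembles into a perfect complex of coherent sheaves, so after choosing a locally free representative, cup product with $\eta^i$ becomes a morphism of locally free sheaves whose non-isomorphism locus
\[
\sJ_i \;=\; \bigl\{\rho \in \sM \,:\, \cup \eta^i \colon H^{d-i}(X, \sL_\rho) \to H^{d+i}(X, \sL_\rho) \text{ is not an isomorphism}\bigr\}
\]
is a closed analytic subspace of $\sM$. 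It then suffices to prove $\sJ_i = \emptyset$ for every $i$.

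Next I would verify the theorem for characters of finite order, which will serve as the base case. If $\rho$ has finite order $n$, then $\ker \rho$ cuts out a finite \'etale Galois cover $f \colon Y \to X$ with abelian Galois group $G$, and $f_* \QQl = \bigoplus_\chi \sL_\chi$ decomposes into $G$-isotypic summands indexed by characters of $G$, with $\sL_\rho$ appearing as one summand. Since $f$ is finite, $H^*(X, f_* \QQl) = H^*(Y, \QQl)$ compatibly with cup product by $\eta$, which corresponds on $Y$ to cup product by the ample class $f^*\eta$. Deligne's Hard Lefschetz (Weil~II) applied to $(Y, f^*\eta)$ produces a $G$-equivariant isomorphism, which therefore restricts to an isomorphism on each isotypic summand; in particular no torsion character lies in $\sJ_i$.

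Finally I would invoke the main theorem of the paper, advertised in the abstract: the special locus $\sJ_i$ is \emph{quasi-linear}, that is, a finite union of torsion translates $t_k + G_k$ of formal subgroups $G_k$ of $\sM$. Each such component contains the torsion point $t_k$, contradicting the previous step unless the component is empty; hence $\sJ_i = \emptyset$ and the cup-product map is an isomorphism at every rank one $\sL$. The step I expect to be the main obstacle is exactly this quasi-linearity statement for the jumping loci $\sJ_i$, since it is the new non-archimedean input; granting it, the reduction to Deligne's classical Hard Lefschetz via finite \'etale covers together with the semicontinuity setup above is essentially formal.
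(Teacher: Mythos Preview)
Your overall strategy is exactly the paper's: define the bad locus inside the character space, show it is quasi-linear, and kill it using Deligne's Hard Lefschetz at the torsion points. But there is a real gap in the step where you ``invoke the main theorem.''

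The Main Theorem~\ref{main.thm} is not a statement that special loci in $\sM$ are automatically quasi-linear. Its input is a Zariski closed subset $S\subset \sG(\QQl)$, where $\sG$ is the formal group of a \emph{free} $\Z_\ell$-module $\pi$, together with an automorphism $\sigma$ of $\pi$ that stabilizes $S$, acts semi-simply, and whose eigenvalues all have the same complex absolute value $\ne 1$. You never produce such a $\sigma$. The paper does so by (i) reducing to $F=\overline{\F}_p$ via spreading and specialization, (ii) taking $\sigma$ to be the geometric Frobenius of a model over a finite field, (iii) invoking Tate's theorem for semi-simplicity on $\pi\otimes\Q_\ell\cong V_\ell(\mathrm{Alb}(X))$ and Weil~I for the weight condition, and (iv) checking that Frobenius stabilizes the bad locus. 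It also first strips off the finite-order part of $\sL$ by a Teichm\"uller twist, so that one is working over the free $\Z_\ell$-quotient $\pi$ of $\pi_1^{\rm ab}(X)$ rather than over the full character space $\sM$; only then is the Main Theorem applicable. None of this is visible in your proposal.

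A smaller point: your $\sJ_i$ is only \emph{constructible}, not closed. The fiberwise cohomology groups $H^{d\pm i}(X,\sL_\rho)$ jump, so ``$\cup\eta^i$ not an isomorphism on $H^{d\pm i}$'' is not cut out by vanishing of a single determinant between locally free sheaves; this is the content of Corollary~\ref{cor:cons}. The paper passes to the Zariski closure $S$ of the bad locus, applies the Main Theorem to $S$, and then uses that a nonempty constructible subset contains a nonempty open of its closure, in which torsion points are dense, to find a torsion point actually lying in the bad locus and derive the contradiction.
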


More generally, we also obtain the isomorphism $\cup \eta^i$ for $\sL[d]$ replaced by $\sF\otimes
\sL$, where $\sF\in D^b_c(X,\QQl)$ is an arithmetic semi-simple perverse sheaf, see   Theorem~\ref{thm:hl}.
Here we call a sheaf arithmetic if it is fixed by the action of a Galois group of a
finitely generated field, see
Definition~\ref{defn:arithmperv} and Remark~\ref{rmk:arithm}. In particular, semi-simple objects $\sF\in
D^b_c(X,\QQl)$ of geometric origin~\cite[p.~163]{BBD82} are arithmetic.

\begin{rmk}
In characteristic zero, Theorem~\ref{thm:hl1} in the more general case where $\sL$  is a
semi-simple local system of arbitrary rank
was first shown by complex analytic techniques
in~\cite[Lem.~2.6]{Sim92}. In fact in characteristic zero we know the Hard Lefschetz isomorphism for
any semi-simple perverse sheaf, see  \cite[ Thm.~5.4.10]{BBD82},    \cite[Thm.~1.4]{Dri01}, \cite[MainThm.~1]{Sab05} and \cite[Thm.~19.47]{Moc07}.

In positive characteristic,  Theorem~\ref{thm:hl1} was shown for a torsion rank one local system
$\sL$ by Deligne~\cite[Thm.~4.1.1]{Del80} relying on arithmetic  weight arguments. More generally the
Hard Lefschetz isomorphism for an arithmetic semi-simple perverse sheaf was known from
combining \cite[Thm.~6.2.10]{BBD82} and the Langlands correspondence for ${\rm GL}_r$ over function
fields~\cite[Thm.~7.6]{Laf02}, see~\cite[1.8]{Dri01}.
\end{rmk}

\subsection{Jumping loci}\label{subsec:intjump}

Jumping loci in the moduli space of line bundles or rank one local systems on complex
varieties have been studied extensively, see~\cite{GL91}  for the initial approach. As a second application we show that jumping loci
in the deformation space of \'etale rank one $\QQl$-local systems satisfy a strong
linearity condition analogous to the well-understood situation in characteristic zero, see
Section~\ref{subsec:jump} for details.

Let $\pi$ be a finitely generated free $\Z_\ell$-module. Let
$$\sG(\QQl) :=\Hom_{\rm cont}(\pi, \QQl^\times)$$
be the set of continuous homomorphisms $\pi\to E^\times$, where
 $E\subset \QQl$ is any finite extension of $\Q_\ell$ endowed with the $\ell$-adic topology.
 We endow $\sG(\QQl)$ with a noetherian Zariski topology such that
 $\dim(\sG(\QQl))=\rm{rank}(\pi)$. This Zariski topology originates from the observation that
 the elements of
 $\sG(\QQl)$ are the $\QQl$-points of a multiplicative formal Lie group $\sG$, see Section~\ref{subsec:formbas}.

For a torsion free quotient $\Z_\ell$-module  $\pi/\pi'$,  we call
$$\sH(\QQl)=\Hom_{\rm cont}(\pi/\pi', \QQl^\times)$$  a {\it formal Lie subgroup} of
$\sG(\QQl)$, or more precisely the $\QQl$-points of a formal Lie subgroup $\sH$.
 Note that  $\codim_{\sG(\QQl)}(\sH(\QQl))=\rm{rank}(\pi')$.

\smallskip

Let $X$ be either a smooth proper variety or the torus $\G_m^d$ over an algebraically
closed field $F$ of characteristic different from $\ell$. In the following we let the free $\Z_\ell$-module
$\pi$ be a quotient of the abelian \'etale fundamental group $\pi_1^{\rm ab}( X)$. Then
with the group of characters $\sG(\QQl)$ as above, any
$s\in \sG(\QQl)$ gives rise to an \'etale rank one $\QQl$-local system $\sL_s$ on $X$.

 For $\sF\in D^b_c(X ,\QQl)$ and $i,j \in \Z$,  we consider the jumping locus of \'etale cohomology
 \[
\Sigma^i(\sF,j) := \{ s\in \sG(\QQl) \, |\, {\rm dim} \ H^i(X ,\sF\otimes\sL_s ) > j \}
\]
which one can show to be Zariski closed in $\sG(\QQl)$.
 Our second application is a structure theorem for those loci, see Theorem~\ref{thm.jump}.

\begin{thm}\label{thm.jumpint}
  For an arithmetic  sheaf $\sF\in  D^b_c (X,\QQl )$ we have
  \[
    \Sigma^i(\sF,j) = \bigcup_{r\in I} s_r \sH_r( \QQl ) ,
  \]
  where $I$ is finite, $s_r\in \sG(\QQl)$ are torsion points and $\sH_r$ are formal
  Lie subgroups of $\sG$.

\end{thm}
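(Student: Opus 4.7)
The plan is to deduce Theorem~\ref{thm.jumpint} from two inputs: the quasi-linearity theorem for closed jumping loci in the formal character group $\sG$ stated in Section~\ref{ss:MainThm}, which supplies the decomposition into translates of formal Lie subgroups; and an arithmetic weight argument, which upgrades the translation parameters to torsion points.

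First I would realise $\Sigma^i(\sF,j)$ as a jumping locus of the kind to which the main theorem of Section~\ref{ss:MainThm} applies. Form the tautological rank one local system $\sL_{\rm univ}$ on $X\times\sG$ whose fibre over $s\in\sG(\QQl)$ is $\sL_s$, and consider the pushforward $Rp_*(\sF\otimes\sL_{\rm univ})$ along the projection $p\colon X\times\sG\to\sG$; for $X$ proper this is a perfect complex on $\sG$ in the rigid-analytic sense, and for $X=\G_m^d$ the same holds after a suitable equivariant completion. Semi-continuity of cohomology in families then identifies $\Sigma^i(\sF,j)$ with the locus where the fibre dimension of $R^ip_*(\sF\otimes\sL_{\rm univ})$ exceeds $j$, exhibiting it as the kind of Zariski closed subset to which the quasi-linearity result applies. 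Invoking it yields
\[
\Sigma^i(\sF,j) \;=\; \bigcup_{r\in I} t_r\sH_r(\QQl)
\]
with $t_r\in\sG(\QQl)$, $\sH_r$ formal Lie subgroups and $I$ finite.

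It remains to arrange that the $t_r$ can be taken to be torsion. Since $\sF$ is arithmetic, fix a model over a finitely generated subfield $k_0\subset F$ with $F=\bar k_0$; then $\Gamma:=\Gal(\bar k_0/k_0)$ acts continuously on $\pi_1^{\rm ab}(X)$ and hence on $\sG$, fixes $\sF$, and thereby stabilises $\Sigma^i(\sF,j)$ together with its irreducible decomposition. Passing to a finite extension of $k_0$ we may assume each $t_r\sH_r$ is individually $\Gamma$-stable, so $\sH_r$ is a $\Gamma$-stable formal Lie subgroup and the class $[t_r]\in(\sG/\sH_r)(\QQl)$ is $\Gamma$-fixed. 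Spreading $k_0$ over a finitely generated base and specialising to a closed point of finite residue field, the geometric Frobenius acts on $\pi_1^{\rm ab}(X)\otimes\Q_\ell$ via Weil numbers of nonzero weight, so in particular with no eigenvalue equal to $1$; consequently the only Frobenius-fixed points of the dual formal Lie group $\sG/\sH_r$ are torsion. Lifting such a torsion class back to $\sG$ yields $s_r$ with $s_r\sH_r = t_r\sH_r$, completing the proof.

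The principal obstacle is the quasi-linearity step, which is the technical core of the paper and requires the non-archimedean deformation-theoretic machinery of Section~\ref{ss:MainThm}. The arithmetic upgrade to torsion points is a standard weight argument, but one must be attentive to the fact that the Galois action on $\sG$ is by formal-group automorphisms rather than by algebraic ones, so weight estimates on $\pi_1^{\rm ab}(X)\otimes\Q_\ell$ must be transported carefully to their effect on characters.
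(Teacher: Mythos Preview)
Your two-step plan rests on a misreading of the Main Theorem (Theorem~\ref{main.thm0} in Section~\ref{ss:MainThm}). That theorem does \emph{not} say that an arbitrary Zariski closed subset of $\sG(\QQl)$ decomposes as a finite union of translates of formal Lie subgroups; such a statement is false (a generic hypersurface is a counterexample). The hypothesis of the Main Theorem is that $S$ is closed \emph{and} stabilised by an automorphism $\sigma$ of $\pi$ satisfying the semi-simplicity and eigenvalue conditions, and its conclusion already includes that the translates are by torsion points. So your step~1, where you ``invoke it'' after only establishing closedness, cannot be carried out; and your step~2, the separate ``arithmetic upgrade to torsion'', is redundant once the Main Theorem is applied correctly.

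The paper's proof (Theorem~\ref{thm.jump}) proceeds in the opposite order from yours. One first uses arithmeticity of $\sF$ to descend to a finite field and obtain a Frobenius $\sigma$ fixing $\sF$, hence stabilising $\Sigma^i(\sF,j)$; one then checks the hypotheses on $\sigma$ --- purity of weight gives the eigenvalue condition, and semi-simplicity on $\pi\otimes\Q_\ell$ is verified via the Albanese and Tate's theorem (a point your sketch omits); and finally one applies the Main Theorem \emph{once} to obtain quasi-linearity with torsion translates directly. Your ingredients are essentially the right ones, but they are assembled in the wrong order, and the load-bearing invocation in your first paragraph has no theorem behind it. Your fixed-point computation in $\sG/\sH_r$ is correct but is a degenerate special case of what the Main Theorem already proves by much deeper means.
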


\begin{rmk}
 As far as we are aware of, Theorem~\ref{thm.jumpint} is the first result on jumping loci of
 $\ell$-adic cohomology in
 positive characteristic.

For $\ch(F)=0$ and $\sF$ of geometric origin in the sense of~\cite[p.~163]{BBD82},
Theorem~\ref{thm.jumpint} is shown in \cite[Thm.~0.1]{GL91}, \cite[Thm.~4.2]{Sim93},  \cite[Thm.~2.2]{Sch15}, \cite[Sec.~11]{KW15},
\cite[Thm.~1.1]{BW15} using complex analytic techniques. Arithmetic methods  for understanding
jumping loci in characteristic zero are
developed in \cite[Thm.~1.1]{PR04} for coherent cohomology and~\cite[Thm.~1.5]{EK20} for $\ell$-adic sheaves.
\end{rmk}
We expect Theorem~\ref{thm.jumpint} to hold  for non-arithmetic
$\sF\in  D^b_c (X,\QQl )$ without the
conclusion on the $s_r$ being torsion points. This holds for
$\ch(F)=0$.

\subsection{Generic vanishing}\label{subsec:genvanint}

The study of generic vanishing was initiated  by Green--Lazarsfeld for the
cohomology  of line bundles  \cite[Thm.~1]{GL87}. As a third application we prove a
generic vanishing result for \'etale rank one $\QQl$-local systems. For this we formulate
a new abstract  approach to generic vanishing based on the Hard Lefschetz isomorphism and  the study of jumping
loci, see Section~\ref{subsec:genvan}. For simplicity  of exposition in the introduction, we confine the presentation
to the most
important special case of abelian varieties.

Let $X$ be an abelian variety of dimension $d$ over the algebraically closed field $F$ of
characteristic different from $\ell$. Let $\pi$ be the
$\ell$-adic completion of the abelian \'etale fundamental group $\pi_1^{\rm ab}(X)$ and
let the notation be as in Section~\ref{subsec:intjump}.

\begin{thm}\label{thm.genvan0}
 Assume that $\sF \in D^b_c( X, \QQl)   $ is  arithmetic and perverse. Then
    \[
\codim_{\sG( \QQl )}(\Sigma^i(\sF,0)) \ge |2i|
\]
for all $i\in\Z$.
\end{thm}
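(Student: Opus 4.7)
My plan combines Theorem~\ref{thm.jumpint}, Theorem~\ref{thm:hl2}, a reduction to a quotient abelian variety via Tate's isogeny theorem, and induction on $\dim X$ using the BBD decomposition theorem.

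For the reductions, the long exact sequence associated to $0\to\sF'\to\sF\to\sF''\to 0$ of perverse sheaves yields $\Sigma^i(\sF,0)\subset\Sigma^i(\sF',0)\cup\Sigma^i(\sF'',0)$, so the codimension bound is stable under extensions and a Jordan--H\"older argument lets me assume $\sF$ is simple arithmetic perverse. Then apply Theorem~\ref{thm.jumpint} to write $\Sigma^i(\sF,0)=\bigcup_{r\in I}s_r\sH_r(\QQl)$, fix a component, and replace $\sF$ by $\sF\otimes\sL_{s_r}^{-1}$ (still simple arithmetic perverse, since $s_r$ is torsion) to assume $s_r=e$. By Theorem~\ref{thm:hl2} applied to the simple arithmetic perverse $\sF$, one has $\Sigma^i(\sF,0)=\Sigma^{-i}(\sF,0)$, so I may assume $i\geq 0$. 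It then suffices to show $\codim_\sG\sH_r\geq 2i$ for each $r$.

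The main obstacle is to identify $\sH_r$ with a geometric object. Via the Weil-pairing identification of $\sG$ with the formal completion of the dual abelian variety $X^\vee$, the formal Lie subgroup $\sH_r$ corresponds to a saturated sub-$\Z_\ell$-module $\pi'\subset T_\ell X^\vee$. Since $\sF$ is arithmetic, I would spread out to a model over a finitely generated subfield $F_0\subset F$ so that the component $s_r\sH_r$ is $\Gal(\bar F_0/F_0)$-stable, hence $\pi'$ is stable under the Galois action on $T_\ell X^\vee$. Tate's isogeny theorem (Faltings in general characteristic) then forces $\pi'=T_\ell B$ for some abelian subvariety $B\subset X^\vee$, dual to a surjection $p\colon X\twoheadrightarrow Y$ of abelian varieties with kernel of dimension $a=\codim_\sG\sH_r/2$, and identifies $\sH_r$ with the character group of $Y$ pulled back to $X$ along $p$.

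Granting this, I induct on $\dim X$. The projection formula gives $H^i(X,\sF\otimes p^*\sM)=H^i(Y,Rp_*\sF\otimes\sM)$ for every character $\sM$ of $Y$; since $\sF$ is simple arithmetic perverse, hence pure by~\cite{Laf02}, the BBD decomposition theorem yields
\[
Rp_*\sF\simeq\bigoplus_{k\in[-a,a]}{}^p\sH^k(Rp_*\sF)[-k]
\]
with each summand arithmetic semi-simple perverse on $Y$. The inclusion $\sH_r\subset\Sigma^i(\sF,0)$ then translates into
\[
\sH_r(\QQl)=\bigcup_{k\in[-a,a]}\Sigma^{i-k}_Y\bigl({}^p\sH^k(Rp_*\sF),0\bigr),
\]
so by irreducibility of $\sH_r$ some $\Sigma^{i-k}_Y\bigl({}^p\sH^k(Rp_*\sF),0\bigr)$ equals all of $\sH_r(\QQl)$. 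For $a\geq 1$ one has $\dim Y=d-a<d$, and the inductive hypothesis applied to the arithmetic perverse summand on $Y$ forces $2|i-k|\leq 0$, i.e.\ $k=i$; combined with $|k|\leq a$ this gives $i\leq a$, i.e.\ $\codim_\sG\sH_r=2a\geq 2i$. The base case $\dim X=0$ is trivial; the boundary case $a=0$ (i.e.\ $\sH_r=\sG$) reduces to showing that $\Sigma^i(\sF,0)=\sG$ forces $i=0$ for simple arithmetic perverse $\sF$, which I expect to follow from Franecki--Kapranov positivity of $\chi(\sF)$ for perverse sheaves on abelian varieties combined with Hard Lefschetz.
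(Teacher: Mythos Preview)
Your approach is genuinely different from the paper's and contains a serious gap at the step where you invoke Tate's theorem.

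You claim that a Frobenius-stable saturated $\Z_\ell$-submodule $\pi'\subset T_\ell X$ must equal $T_\ell B$ for some abelian subvariety $B$. This is false. Tate's theorem gives $\mathrm{End}_{\F_q}(X_0)\otimes\Z_\ell\cong\mathrm{End}_{\mathrm{Gal}}(T_\ell X)$, so the Galois-equivariant projector onto $\pi'\otimes\Q_\ell$ lies in $\mathrm{End}_{\F_q}(X_0)\otimes\Q_\ell$, but there is no reason it should come from the $\Q$-algebra $\mathrm{End}^0(X_0)$, which is what you would need to produce an abelian subvariety. Concretely, take $X=E$ an ordinary elliptic curve over $\overline\F_p$ with $\mathrm{End}^0(E)$ an imaginary quadratic field in which $\ell$ splits: then $V_\ell E$ decomposes into two Frobenius-stable lines, neither of which can be $V_\ell B$ for a subvariety $B$ (there is none of dimension $1/2$). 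In particular $\dim_{\Z_\ell}\pi'$ can be odd, so your formula $\codim_\sG\sH_r=2a$ already fails. The whole inductive reduction to a quotient abelian variety collapses.

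The paper avoids this entirely. After the same reductions (simple $\sF$, torsion translate, quasi-linearity via Theorem~\ref{thm.jump}), it does \emph{not} try to geometrize $\pi'$. Instead, for each component it chooses a maximal isotropic $V\subset V_\ell X$ for the Weil pairing with $2\dim(\pi'\cap V)\ge\dim\pi'$, and works with the half-rank quotient $\pi^{\mathrm{half}}=\pi/(V\cap\pi)$. On the corresponding tower the polarization $\eta$ becomes divisible (Example~\ref{eq.tower10}), so Proposition~\ref{prop.keyvanish} and Lemma~\ref{lem:crucial} force the Lefschetz map on Fourier--Mellin cohomology to vanish, while Hard Lefschetz (Theorem~\ref{thm:hl}) forces it to be generically an isomorphism; this contradiction yields $\codim\ge i$ for $\pi^{\mathrm{half}}$, and the isotropic choice doubles it to $2i$.

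Two smaller issues: your symmetry $\Sigma^i=\Sigma^{-i}$ needs Theorem~\ref{thm:hl} (the twisted version), not Theorem~\ref{thm:hl2}; and your $a=0$ case is not settled by Franecki--Kapranov, which concerns Euler characteristics over $\C$ and does not give generic vanishing. In positive characteristic the statement $\codim\Sigma^i(\sF,0)>0$ for $i\ne 0$ is~\cite{Wei16}.
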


This is Corollary~\ref{cor:genvan}.

\begin{rmk}
  For $\ch(F)=0$ Theorem~\ref{thm.genvan0} is equivalent to \cite[Thm.~4.1]{Sch15} and~\cite[Thm.~1.3]{BSS18}, who do not
need the arithmeticity assumption, see also~\cite[Thm.~1.1]{KW15}.
For $\ch(F)>0$,  the  inequality    $$\codim(\Sigma^i(\sF,0)) >0 \ {\rm for} \ i\ne 0$$ is shown in \cite[Intro]{Wei16}.
\end{rmk}

Our proof of Theorem~\ref{thm.genvan0} relies on a study of a Galois tower
\[
\cdots \to X_{n+1} \to X_n \to \cdots \to X_0 =X
\]
which is
``isotropic'' with respect to the Weil pairing associated to a polarization $\eta$.
Here ${\rm Gal}(X_n/X) = (\Z_\ell/\ell^n\Z_\ell)^r $, $r\le d$. This tower gives rise to a
``tautological'' rank one \'etale $\mathfrak R$-local system $\sL_{\mathfrak R}$ on $X$.
Here the noetherian Jacobson ring $\mathfrak
R=\Z_\ell \llbracket \pi \rrbracket \otimes_{\Z_\ell} \QQl$ is defined in terms of the
completed group ring $\Z_\ell \llbracket \pi \rrbracket$, see Section~\ref{subsec:formbas}.

The idea for the proof of Theorem~\ref{thm.genvan0} is to use
on the one hand an
isomorphism property of the Lefschetz operator on finitely generated $\mathfrak R$-modules
\[
  \cup \eta^i\colon H^{-i}(X,\sF\otimes\sL_{\mathfrak R}) \to H^i(X,\sF\otimes\sL_{\mathfrak R}) .
\]
 On the other hand the choice of the tower leads to a
vanishing result for the Lefschetz operator $\cup \eta^i$ for $i\ge 1+d-r $, see Proposition~\ref{prop.keyvanish} and Lemma~\ref{lem:crucial}.

\subsection{Main Theorem} \label{ss:MainThm}

The three theorems presented in this introduction all rely on  the Main Theorem which does
not refer to any variety $X$. It describes  a Zariski closed locus of the space of
$\QQl$-points  of a multiplicative formal
Lie group under the condition that the locus is invariant under a specific type of linear automorphism.

We use the notation introduced in~\ref{subsec:intjump}, i.e.\
let $\pi$ be a finitely generated free $\Z_\ell$-module  with corresponding character
group $\sG(\QQl)=\Hom_{\rm cont} (\pi ,\QQl^\times )$. As before we endow $\sG(\QQl)$ with a Zariski
topology via the canonical  bijection
\[
 \Spm (\mathfrak R) \xrightarrow{\sim} \sG(\QQl) \quad J\mapsto \left(\pi\to (\mathfrak R/J)^\times
   =\QQl^\times \right)
\]
where $\mathfrak R=\Z_\ell \llbracket \pi \rrbracket \otimes_{\Z_\ell} \QQl$ receives the
tautological character $\pi\to \mathfrak R^\times$, see Section~\ref{subsec:formbas}. The following
is Theorem~\ref{main.thm}.

\begin{thm}[Main Theorem] \label{main.thm0}
   Let $\sigma$ be a $\Z_\ell$-linear automorphism of  $\pi$ such that $\sigma $ acts
   semi-simply on   $
\pi\otimes_{\Z_\ell} \Q_\ell$  and such that for a given complex embedding $\iota\colon
\QQl \hookrightarrow \C$, its eigenvalues $\alpha_i$ verify $|\iota (\alpha_i)|=|\iota (\alpha_j)|\neq 1$ for all $i,j$.
 Let
$S\subset  \sG(\QQl)$ be a Zariski closed subset. Then,
if $\sigma(S)= S$  we have
  \[
    S = \bigcup_{r\in I} s_r \sH_r( \QQl ) ,
  \]
  where $I$ is finite, $s_r\in \sG(\QQl)$ are torsion points and $\sH_r$ are formal
  Lie subgroups  of $\sG$.
\end{thm}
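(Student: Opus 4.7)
The plan is to combine a dynamical argument that produces periodic torsion points in $S$ with a formal linearization near such a point. Write $V = \Hom_{\Z_\ell}(\pi, \QQl)$, so that $\sigma$ acts linearly and semi-simply on $V$ with eigenvalues $\alpha_i$; the hypothesis $|\iota(\alpha_i)|\ne 1$ has two consequences. First, every $\alpha_i \ne 1$, so $\sigma - \id$ is invertible on $\pi \otimes_{\Z_\ell} \Q_\ell$, the cokernel $\pi / (\sigma - \id)\pi$ is finite, and $\sG^\sigma = \ker(\sigma - \id \colon \sG \to \sG)$ is a finite group of torsion characters. Second, the eigenvalues $\prod \alpha_i^{k_i}$ of $\sigma$ on $\sym^d V^\vee$ all have complex absolute value $\lambda^d$, which is strictly monotone in $d$, providing a crucial non-resonance.

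I would first reduce to $S$ irreducible, using that $\sigma$ permutes the finitely many irreducible components and that a power of $\sigma$, which still satisfies the hypotheses, stabilises each. Next I look for a $\sigma^N$-fixed point in $S$; by the first consequence above, such a point is automatically torsion. To produce it I consider a minimal non-empty $\sigma$-stable Zariski closed subset $S_1 \subseteq S$ (existing by noetherianity of $\mathfrak R$) and argue that $\dim S_1 = 0$, so that $S_1$ is a finite $\sigma$-orbit of torsion characters. I expect this to be the principal technical obstacle: it needs a non-archimedean fixed-point argument that uses the invertibility of $d\sigma - \id$ on the Lie algebra to control the dynamics on a putative higher-dimensional $S_1$, and is likely where the full strength of the affinoid structure of $\mathfrak R$ enters.

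Once a torsion $\sigma^N$-fixed point $s_0 \in S$ is available, I translate by $s_0^{-1}$ so the identity lies in $S$ and pass to $V$ via the formal logarithm, which intertwines the $\sigma$-action on a formal neighborhood of the identity in $\sG$ with the linear action on a formal neighborhood of $0$ in $V$. Let $T \subseteq V$ be the formal germ of $S$ at $0$, with defining ideal $I \subseteq \QQl\llbracket V^\vee\rrbracket$. Because $\sigma$ is semi-simple on each finite-dimensional quotient $\QQl\llbracket V^\vee \rrbracket / \mathfrak m^{k+1}$, and its eigenvalues on $\sym^d V^\vee$ have pairwise distinct complex absolute values $\lambda^d$ across $d$, every $\sigma$-eigenvector in the completed symmetric algebra is a single homogeneous polynomial. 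By semi-simplicity $I$ is the topological closure of $\bigoplus_d (I \cap \sym^d V^\vee)$, so $T$ is an algebraic cone at the origin.

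To upgrade cone to linear subspace I combine the irreducibility of $S$ with the global Zariski-closedness in $\Spm \mathfrak R$. At a smooth point of $T$ away from the vertex I present $T$ as the graph of a $\sigma$-equivariant formal map $f \colon W \to W'$ between $\sigma$-stable complementary subspaces; the relation $f_d \circ \sym^d \sigma_W = \sigma_{W'} \circ f_d$ forces $f_d = 0$ for all $d \ge 2$ because the induced action on $\Hom(\sym^d W, W')$ has eigenvalues of complex absolute value $\lambda^{d-1} \ne 1$, with no non-trivial fixed vector. Singular irreducible $\sigma$-stable cones such as $\{xy = z^2\}$ would have to be excluded by a separate argument using that their defining equations on $\sG$, pulled back along $\exp$, involve coefficients of unbounded $\ell$-adic norm (the $1/n$ of the logarithm) and therefore cannot lie in the affinoid algebra $\mathfrak R$; the Zariski hypothesis on $S$ rules them out. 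Exponentiating the resulting $W$ back yields a formal Lie subgroup $\sH \subseteq \sG$, and the irreducible component of $S$ through $s_0$ is $s_0 \sH(\QQl)$. Carrying this out for each of the finitely many irreducible components of $S$ gives the claimed decomposition $S = \bigcup_r s_r \sH_r(\QQl)$.
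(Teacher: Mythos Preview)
Your overall architecture---find a torsion point in $S$, pass to the Lie algebra via the logarithm, show the germ is a cone, upgrade to a linear subspace---matches the paper's strategy, and your homogeneity argument is essentially the paper's Proposition~2. But two of the four steps have genuine gaps.

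\textbf{Producing a torsion point.} You acknowledge this as the principal obstacle and propose a minimal $\sigma$-stable closed $S_1\subset S$, hoping to show $\dim S_1=0$. You give no mechanism for this; a priori, $\sigma$-orbits of non-torsion points could be Zariski dense in a positive-dimensional $S_1$, and nothing you have written excludes this. The paper's route is completely different and does not use dynamics of $\sigma$ on $S$ at all: one first replaces $S$ by $[\ell^n]S$ so that it meets a small polydisc on which $\log$ is defined; the resulting radical ideal $I$ in a Tate algebra $E\langle T_1,\dots,T_b\rangle$ is $\sigma$-stable. Proposition~1 of the paper---a delicate convergence argument in the Tate algebra, iteratively killing coefficients of a chosen $g_\circ\in I$ with $g_\circ(0)=1$ by applying operators $(\sigma-\underline\alpha^m)/(1-\underline\alpha^m)$---shows that such an $I$ must lie in the maximal ideal $(T_1,\dots,T_b)$, i.e.\ $1\in[\ell^n]S$. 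This is the substantive $\ell$-adic input and there is no shortcut around it.

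\textbf{Cone to linear.} Your graph argument requires writing the cone $T$ globally as the graph of a $\sigma$-equivariant map $f\colon W\to W'$; but an irreducible singular cone such as $\{T_1T_2=T_3^2\}$ is not a graph over any hyperplane, so the argument does not apply. Your proposed exclusion of such cones---that the defining equation transported to $X$-coordinates via $\exp$ has unbounded denominators and hence does not lie in $\mathfrak R$---is not a valid argument: it only says that one particular generator fails to lie in $\mathfrak R$, not that the corresponding closed subset of $\Spm(\mathfrak R)$ cannot exist. (You are working with the \emph{formal} germ in $\QQl\llbracket T\rrbracket$, where no growth condition is imposed anyway.) The paper avoids this entirely: once the ideal is homogeneous, the zero set is stable under the homothety $\ell$ on the polydisc, hence $[\ell]$ stabilises $[\ell^n]S\cap\sG(\QQl)(\rho)$; a density lemma propagates this to $[\ell]([\ell^n]S)\subset[\ell^n]S$, and then a theorem of de~Jong \cite[Prop.~1.2(1)]{deJ00} gives quasi-linearity directly. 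The paper also mentions, without details, an alternative closer to what you attempt: use Weierstrass preparation to find a \emph{smooth} torsion point of $S$ and linearise there. Your version lacks both the smoothness input and a correct treatment of the singular case.
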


We briefly describe how we apply the Main Theorem.

The  idea of our proof of Theorem~\ref{thm:hl1} comes from~\cite{Dri01}.
We use a deformation space of rank one $\QQl$-local
systems on $X$  isomorphic to  $\sG(\QQl)$ as above. Inside of this deformation space we define the
bad locus to be the set of rank one local systems $\sL$ for which Theorem~\ref{thm:hl1}
fails to hold. The bad locus  is constructible in $\sG(\QQl)$ and we let $S$ be its Zariski
closure. Then $S$ is stabilized by a suitable Frobenius action $\sigma$ satisfying the
conditions of the Main Theorem. So it implies that $S$ is a union of torsion translated
Lie subgroups. Consequently, the torsion rank one local systems are dense in $S$. In view of Deligne's Hard
Lefschetz theorem the bad locus cannot contain a torsion local system, so it is empty.

For Theorem~\ref{thm.jumpint} we use that  the jumping loci, which  are closed subsets of $\sG(\QQl)$,
satisfy the assumption of the Main Theorem with respect to a suitable Frobenius action
$\sigma$.  The proof of Theorem~\ref{thm.genvan0} combines Theorem~\ref{thm:hl1} and
Theorem~\ref{thm.jump} as sketched at the end of Section~\ref{subsec:genvanint}.

\medskip

We now describe the idea of the proof of the Theorem~\ref{main.thm0}. We can assume that
$S$ is irreducible.       The conclusion
in particular implies that the torsion points are Zariski dense in $S$  (see
Lemma~\ref{lem:torsiondense}). So we first construct one torsion point on $S$.
To this aim, we replace  $S$ by the closed subset $\ell^n S$ of the group $\sG(\QQl)$ for some
$n\gg 0$ so as make sure that it cuts non-trivially
a small $\ell$-adic  neighborhood of $1\in \sG(\QQl)$ on which the $\ell$-adic logarithm map is an
isomorphism. This
enables one to transfer the problem  to a closed $\ell$-adic polydisc in the Lie algebra
 of $\sG$, i.e.\ to a  Tate
algebra on which $\sigma$ acts linearly.

Proposition~\ref{prop1} and~\ref{prop2}, proven using $\ell$-adic analysis,
show that in  our logarithmic coordinate chart $S$ contains the origin and moreover is
conical.
This implies that $\ell S\subset S$ by a density argument.  We can
then apply a theorem of de Jong~\cite[Prop.~1.2(1)]{deJ00} to finish the proof.
Alternatively,  what was our initial
proof, one can use the Weierstrass preparation theorem to find another torsion point on
$S$ contained in its regular locus,
in case $1\in S$ was singular. Then one  shows using simple $\ell$-adic analysis that if $S$
contains a torsion point in its regular locus  such that $S$ is conical around this
point then $S$ is linear. This is similar to the approach in~\cite[Section~4]{EK20}. As de Jong's argument   shortens our initial argument, we do
not give the details of it in this note.

\medskip

{\it Acknowledgements}:   Part of this work has been initiated while the first author was at
MSRI, then at IHES. We thank the two institutions for excellent working conditions.
We thank
Johan de Jong for kindly communicating to us his article~\cite{deJ00}, which allowed us to
abbreviate our original argument. We thank the referee for the friendly, sharp  and very helpful report.
 We deeply acknowledge the influence
of~\cite{Dri01} and of~\cite{BSS18} on our method.

\section{Tate algebras}

\noindent
Let $E$ be a finite extension of $\Q_\ell$ with residue field $k$ and uniformizer $\lambda
\in \sO_E$. We always fix an embedding $E\hookrightarrow \QQl$ into an algebraic closure
of $\Q_\ell$.
We let $|-|\colon E\to \R$ be the $\ell$-adic absolute value normalized by $|\ell|=1/\ell$.
Let $A=E\langle
T_1, \ldots , T_b \rangle$ be the Tate algebra, see~\cite[Sec.~3.1]{FvdP04}. Let $| - |$ be the Gauss norm on $A$, i.e.  $|g|={\rm sup}_n  |g^{(n)}|$ where $g^{(n)}$ is the coefficient of $\underline{T}^n, \  n\in \N^b$  in the expansion of $g$.
 We denote by $M$ the maximal ideal $(T_1, \ldots , T_b)\subset A$.

Let  $\sigma\in {\rm Mat}_b(\sO_E)$ be the diagonal matrix $\sigma= {\rm diag}(\alpha_1,
\ldots ,  \alpha_b )$. Then $\sigma$ induces an endomorphism of $A$ as an $E$-algebra by
the rule
$T_j\mapsto \sum_{i=1}^b \sigma_{i j} T_i$, which we
also denote by $\sigma$.

\begin{prop}\label{prop1}
Assume that $\underline \alpha^{ n}\ne 1$ for all
$n\in \N^b\setminus \{ 0\}$.
Let $I\subset A$ be an ideal with $\sigma(I)\subset I$ which is not
contained in the maximal ideal $ M$. Then $I=A$.
\end{prop}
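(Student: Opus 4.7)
My plan is to combine an induction on the number of variables $b$ with a Lie-algebra linearization of $\sigma$ via the $\ell$-adic logarithm, and to conclude by an $\ell$-adic analytic extraction of the constant term of an element $f \in I$.

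\emph{Inductive reduction to unit eigenvalues.} I induct on $b$; the case $b = 0$ is trivial. For each $j \in \{1,\dots,b\}$, apply the inductive hypothesis to the quotient $A/(T_j) \cong E\langle T_1, \ldots, \widehat{T_j}, \ldots, T_b\rangle$: it inherits a diagonal $\sigma$-action satisfying the corresponding hypothesis, and the image of $I$ in it remains outside the maximal ideal at $0$ since $I \not\subset M$. By induction that image is the full quotient, i.e.\ $I + (T_j) = A$, so $V(I) \cap \{T_j = 0\} = \emptyset$; equivalently $\bar T_j$ is a unit in $B := A/I$, and by compactness of the affinoid $\Spm B$ there is $c_j > 0$ with $|\bar T_j|_B \geq c_j$. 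If some $|\alpha_j| < 1$, the geometric dynamics $(t_i) \mapsto (\alpha_i t_i)$ on the unit polydisc would contract $|T_j|$ below $c_j$ in finitely many iterations, contradicting $\sigma$-invariance of $V(I)$. Therefore all $\alpha_j \in \sO_E^\times$.

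\emph{Passage to the Lie algebra via the $\ell$-adic log.} After replacing $\sigma$ by a sufficiently high power (which preserves $\sigma$-invariance of $I$ and the hypothesis $\underline{\alpha}^n \ne 1$, since $n \mapsto Nn$ is injective on $\N^b \setminus \{0\}$), I may assume $\alpha_j \in 1 + \lambda^K \sO_E$ with $K$ large enough that $\gamma_j := \log \alpha_j \in \lambda^K \sO_E$ is defined and the assignment $t \mapsto \sigma^t$, $T_j \mapsto \exp(t\gamma_j) T_j$, extends $\sigma$ to a continuous homomorphism $\Z_\ell \to \mathrm{Aut}_E(A)$. Since $I$ is closed in the Gauss norm and $\Z$ is dense in $\Z_\ell$, $\sigma^t(I) \subset I$ for every $t \in \Z_\ell$, and the infinitesimal generator $D := \sum_j \gamma_j T_j \,\partial/\partial T_j$ is a derivation with $D(I) \subset I$. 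On the monomial $T^n$, $D$ acts by the scalar $\gamma(n) := \sum_j n_j \gamma_j$, and injectivity of $\log$ on $1 + \lambda^K\sO_E$ converts the hypothesis $\underline{\alpha}^n \ne 1$ for $n \ne 0$ into $\gamma(n) \ne 0$ for all such $n$. Thus $0$ is an isolated eigenvalue of $D$ on $A$ whose eigenspace is exactly the constant subalgebra $E \subset A$.

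\emph{The main obstacle: extracting the constant term.} Choose $f \in I$ with $f(0) = 1$; it suffices to show $1 \in I$, which will give $I = A$. The delicate point here is that the nonzero eigenvalues $\gamma(n)$ are \emph{not} bounded away from $0$ in $\ell$-adic absolute value, so a direct spectral projection onto the $0$-eigenspace of $D$ does not converge in the Gauss norm. My plan is to exploit the full $\Z_\ell$-orbit $\{\sigma^t(f)\}_{t \in \Z_\ell} \subset I$ together with the explicit asymptotic
\[
\underline{\alpha}^{n\ell^k} - 1 = \ell^k \gamma(n)\bigl(1 + O(\ell^k \gamma(n))\bigr),
\]
which permits a controlled rescaling and telescoping of the sequence $\sigma^{\ell^k}(f)$: appropriately rescaled differences should form a Cauchy sequence in $I$ whose Gauss-norm limit is $f(0) = 1 \in E$, and the closedness of $I$ in $A$ then yields $1 \in I$. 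This final $\ell$-adic analytic step is the technical heart of the proposition.
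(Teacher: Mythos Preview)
Your first two steps are correct and set up an interesting alternative framework. The induction on $b$ together with the dynamical argument really does force $|\alpha_j|=1$ whenever $I\ne A$, and after passing to a high power of $\sigma$ your $\Z_\ell$-interpolation $t\mapsto\sigma^t$ is well-defined and does preserve $I$ (density of $\Z_{\ge 0}$ in $\Z_\ell$ plus closedness of $I$ is enough, so you need not worry about $\sigma^{-1}$). The derivation $D=\sum_j\gamma_jT_j\partial_{T_j}$ satisfies $D(I)\subset I$ and has monomial eigenvalues $\gamma(n)\ne 0$ for $n\ne 0$. All of this is valid.

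The gap is entirely in Step~3, and it is not a gap you can close along the lines you sketch. The sequence $\sigma^{\ell^k}(f)$ converges in Gauss norm to $f$, not to $f(0)$, because $\underline\alpha^{n\ell^k}\to 1$ for every $n$. Rescaled differences such as $\ell^{-k}\bigl(\sigma^{\ell^k}(f)-f\bigr)$ converge to $D(f)$, whose constant term is $0$; more generally any limit of $E$-linear combinations of $\{\sigma^t(f)\}$ built from your asymptotic $\underline\alpha^{n\ell^k}-1\sim\ell^k\gamma(n)$ will be a polynomial in $D$ applied to $f$, and no polynomial in $D$ can annihilate all nonzero eigenvalues $\gamma(n)$ while fixing the constant. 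Averaging over $\Z_\ell$ with Haar measure does not help either: one computes $\int_{\Z_\ell}\beta^{t}\,dt=\dfrac{\log\beta}{\beta-1}\ne 0$ for $\beta\ne 1$ in the relevant range, so the nonconstant coefficients survive. In short, the obstruction you correctly identify --- that $0$ is not isolated in the $D$-spectrum $\{\gamma(n)\}$ --- is genuine, and there is no single convergent operator in the group/Iwasawa algebra of your $\Z_\ell$-action that realises the projection to constants.

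The paper's proof avoids trying to build such a projector. It fixes $g_\circ\in I$ with $g_\circ(0)=1$, introduces the \emph{compact} subset $\tilde I=\{g\in I:g(0)=1,\ |g^{(n)}|\le|g_\circ^{(n)}|\}$, and applies the coefficient-killing operators $\Phi^m(g)=\dfrac{\sigma(g)-\underline\alpha^{m}g}{1-\underline\alpha^{m}}$ in a carefully chosen order: one kills monomials $m$ in decreasing order of $|1-\underline\alpha^{m}|$ (equivalently $|\gamma(m)|$ in your coordinates). The point of this ordering is the estimate $|\Phi^m(g)^{(n)}|\le|g^{(n)}|$ once all coefficients with strictly larger $|1-\underline\alpha^{\,\cdot\,}|$ have already been killed, so the iterates never leave $\tilde I$. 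One does \emph{not} obtain a Cauchy sequence; instead one obtains a nested family of nonempty closed subsets $\tilde I_i\subset\tilde I$, and compactness of $\tilde I$ forces $\bigcap_i\tilde I_i\ne\varnothing$, producing $1\in I$. Your framework with $D$ would support exactly the same argument (with $\Psi^m(g)=g-\gamma(m)^{-1}D(g)$ in place of $\Phi^m$), but the two ingredients you are missing are the ordering by $|\gamma(m)|$ and the replacement of ``explicit Cauchy sequence'' by ``nested closed subsets of a compact set''.
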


\begin{proof} 
As $I$ does not lie in $M$, there is
a $g_\circ\in I$ with $g_\circ(0)=1$, which we fix for the rest of the proof.
The basic idea of the proof is simple: 
successively apply linear  expressions in $\sigma$ to $g_\circ$ in order to kill the
coefficients of degree $>0$ without changing the constant coefficient $1$. The problem is
to make such a sequence of elements of $I$ converge.

As any ideal in $A$ is closed \cite[Thm.~3.2.1]{FvdP04}, so is the subset
  \[
    \tilde I= \{ g\in I\,|\, g(0)=1, |g^{(n)}|\le |g_\circ^{(n)} | \}
  \]
of $A$.

  \begin{lem}\label{lem.prop1}
$\tilde I$ is compact.
    \end{lem}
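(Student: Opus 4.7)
The plan is to exhibit $\tilde I$ as a closed subset of the larger set
\[
K = \{ g \in A \,:\, |g^{(n)}| \le |g_\circ^{(n)}| \text{ for all } n \in \N^b \},
\]
and then to prove that $K$ itself is compact. Granted that, $\tilde I$ is compact because the extra conditions defining it ($g \in I$ and $g(0) = 1$) are closed: $I$ is closed in $A$ by \cite[Thm.~3.2.1]{FvdP04} (as the excerpt recalls), and the evaluation $g \mapsto g(0)$ is continuous for the Gauss norm.

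The core step is the compactness of $K$. The map $g \mapsto (g^{(n)})_{n \in \N^b}$ identifies $K$ with the product
\[
\prod_{n \in \N^b} D_n, \qquad D_n = \{ a \in E \,:\, |a| \le |g_\circ^{(n)}| \},
\]
and each disk $D_n \subset E$ is compact because $E$ is a local field with finite residue field $k$ (so any closed disk in $E$ is, up to scaling, $\sO_E$, which is a profinite set). By Tychonoff, the product is compact in the product topology. It then remains to check that the Gauss-norm topology on $K$ agrees with the product (coefficient-wise) topology. Continuity of each coordinate map $g \mapsto g^{(n)}$ is immediate from $|g^{(n)}| \le |g|$. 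For the converse, fix $\e > 0$; since $g_\circ \in A$, there exists $N$ with $|g_\circ^{(n)}| < \e$ for $|n| \ge N$. For $g, h \in K$ the ultrametric inequality then gives
\[
|g^{(n)} - h^{(n)}| \le \max(|g^{(n)}|, |h^{(n)}|) \le |g_\circ^{(n)}| < \e \qquad \text{for } |n| \ge N,
\]
so $|g - h|$ is controlled by the finitely many coefficients indexed by $|n| < N$. This shows that product-topology convergence implies Gauss-norm convergence on $K$, as desired.

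The only potentially delicate point is the comparison of the two topologies, but the built-in coefficient decay of a Tate algebra, combined with the local compactness of $E$, makes this a short ultrametric estimate rather than a genuine obstacle. In spirit this is a non-archimedean incarnation of Arzelà–Ascoli: the envelope $(|g_\circ^{(n)}|)_n$ plays the role of a uniform equicontinuity bound, forcing coefficient-wise limits to be uniform.
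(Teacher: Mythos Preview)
Your proof is correct and follows essentially the same approach as the paper: both arguments reduce to showing that the set $K=\{g:|g^{(n)}|\le |g_\circ^{(n)}|\}$ (called $J$ in the paper) is compact by identifying it with a Tychonoff product of compact discs and checking that the product topology maps continuously to the Gauss-norm topology, using that only finitely many $|g_\circ^{(n)}|$ exceed a given $\rho>0$. The only cosmetic difference is that the paper parametrizes the product by $\prod_{g_\circ^{(n)}\ne 0}\sO_E$ via $(u^{(n)})\mapsto (u^{(n)}g_\circ^{(n)})$ rather than by the discs $D_n$ directly.
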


    \begin{proof}
    As a Banach space, $A$ is isomorphic to $c_0$, the set of  sequences $g=(g^{(n)})_{n\in \N^b} $ in $E$
    with $|g^{(n)}|\to 0$ as $|n|\to \infty$.  It is endowed with the supremum norm $|g|={\rm sup}_n |g^{(n)}|$.
    One has an injective map
    $$\varphi\colon \prod_{n\in \N^b, g_\circ^{(n)}\neq 0} \sO_E \to J\subset c_0, \ (u^{(n)}) \mapsto  (u^{(n)}g_\circ ^{(n)}),$$
    defining $J$ as its image. As $\tilde I=J\cap I\cap \{(g^{(n)})_n, \ g^{(0)}=1\}$, $I$ is closed and the map $c_0\to E, \ (g^{(n)})\mapsto g^{(0)}$ is continuous, we just have to prove that $J$ is compact. 
The map  $\varphi$ is  continuous for the product topology on the left and the
   restriction of the topology of $A$ to $J$ on  the right.  Indeed, for $\rho>0$,  the inverse image
   \ml{}{ \varphi^{-1} (\{ |g|<\rho \})=  \\  \prod_{ n\in \N^b, 0< |g_\circ^{(n)}| < \rho}\sO_E\times
   \prod_{ n\in \N^b,  |g_\circ^{(n)}| \ge  \rho} \{u^{(n)}, |u^{(n)}|<  | g_\circ^{(n)}|^{-1}\rho\} \notag}
   is open
   as the index set      $\{n\in \N^b,  |g_\circ^{(n)}| \ge  \rho\}$ on the right is
   finite.

   As $\sO_E$ is compact, so is  $\prod_{n\in \N^b, g_\circ^{(n)}\neq 0} \sO_E$ by Tychonoff's theorem.  So its image $J$ by the continuous map  $\varphi$ is compact as well. This finishes the proof.
\end{proof}

    For $i\ge -1,$ set
    $$N_i= \{ n\in \N^b\setminus\{0 \}\, |\, |1-\underline \alpha^n| \ge |\lambda^i|  \}.$$
   This is an ascending chain of subsets of  $\N^b\setminus\{0 \} $  such that
   $$N_{i+1}\setminus N_i=\{ n\in \N^b\setminus\{0 \}\, |\, |1-\underline \alpha^n| = |\lambda^{i+1}|  \}.$$
   Because  $\alpha_i\in \sO$ and $\underline{\alpha}^n\neq 1$ for all $n\in \N^b\setminus \{0\}$, 
  one has
    $$N_{-1}=\varnothing,  \ \ \cup_i N_i=\N^b\setminus\{0 \}.$$
We define closed subsets \[\tilde I_i=
      \{ g\in \tilde I\,|\, g^{(n)}=0 \text{ for all } n\in N_i \}\subset I.
    \]
    In particular, $\tilde I_{-1}= \tilde I$ contains $g_\circ$, so it is non-empty. The
    $\tilde I_i$ form a descending chain of subsets of $\tilde I$.

    \begin{claim}\label{cl.prop2}
For all $i\ge 0$ the set $\tilde I_i$ is non-empty.
\end{claim}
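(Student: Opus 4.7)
The plan is to prove the claim by induction on $i \geq 0$, starting from the trivially non-empty base case $\tilde I_{-1} = \tilde I$, which contains $g_\circ$. For the inductive step, assuming $\tilde I_{i-1}$ is non-empty, the strategy is to kill coefficients indexed by $N_i \setminus N_{i-1}$ one at a time, while keeping every intermediate element inside $\tilde I_{i-1}$, and then to invoke the compactness of $\tilde I$ established in Lemma~\ref{lem.prop1} to handle the (potentially infinite) set $N_i \setminus N_{i-1}$ all at once.

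The heart of the argument is the following construction. Given $g \in \tilde I_{i-1}$ with $g^{(n_0)} \neq 0$ for some $n_0 \in N_i \setminus N_{i-1}$, I set
\[
\tilde g \;=\; \frac{\sigma(g) - \underline{\alpha}^{n_0}\, g}{1 - \underline{\alpha}^{n_0}}.
\]
Since $\sigma(I) \subseteq I$ and $1 - \underline{\alpha}^{n_0} \in E^\times$, this element lies in $I$. Using that $\sigma$ acts on $T^n$ by multiplication by $\underline{\alpha}^n$, a direct computation yields $\tilde g(0) = 1$ and $\tilde g^{(n)} = \tfrac{\underline{\alpha}^n - \underline{\alpha}^{n_0}}{1 - \underline{\alpha}^{n_0}}\, g^{(n)}$ for $n \neq 0$. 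In particular $\tilde g^{(n_0)} = 0$, and any coefficient already vanishing (notably those indexed by $N_{i-1}$) stays zero. Writing $\underline{\alpha}^n - \underline{\alpha}^{n_0} = (1 - \underline{\alpha}^{n_0}) - (1 - \underline{\alpha}^n)$ and applying the ultrametric inequality gives $|\tilde g^{(n)}| \leq |g^{(n)}|$ in each case: for $n \in N_i \setminus N_{i-1}$ both terms have absolute value $|\lambda|^i$, while for $n \notin N_i$ the second has strictly smaller absolute value than the first, making the ratio of absolute value exactly $1$. Thus $\tilde g \in \tilde I_{i-1}$, with one additional coefficient in $N_i \setminus N_{i-1}$ killed.

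Iterating this construction, for any finite subset $F \subseteq N_i$ the closed subset $\tilde I_{i-1} \cap \bigcap_{n \in F} \{ g : g^{(n)} = 0 \}$ of $\tilde I$ is non-empty. Hence the family of closed subsets $\{ g \in \tilde I_{i-1} : g^{(n)} = 0 \}$ of $\tilde I$, indexed by $n \in N_i$, has the finite intersection property. By compactness of $\tilde I$ their total intersection, which is precisely $\tilde I_i$, is non-empty, completing the induction.

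The main technical point to verify is the norm estimate $|\tilde g^{(n)}| \leq |g^{(n)}|$, which is what allows one to stay inside $\tilde I$ under iteration. It relies crucially on the calibration $|1 - \underline{\alpha}^{n_0}| = |\lambda|^i$ valid for $n_0 \in N_i \setminus N_{i-1}$, so that the denominator of $\tilde g$ exactly cancels the largest possible size of the numerator in the non-archimedean triangle inequality; without this precise scaling the inductive norm control would fail.
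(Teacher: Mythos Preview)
Your proof is correct and follows essentially the same approach as the paper: the operator $\tilde g = (\sigma(g) - \underline{\alpha}^{n_0} g)/(1 - \underline{\alpha}^{n_0})$ is exactly the paper's $\Phi^{n_0}$, and your norm estimate $|\tilde g^{(n)}| \le |g^{(n)}|$ is the content of the paper's Claim~\ref{cl.prop4}. The only cosmetic difference is that the paper fixes a linear order on $N_i\setminus N_{i-1}$ and builds a descending chain of non-empty closed subsets, whereas you phrase the same idea via the finite intersection property; both invoke the compactness of $\tilde I$ from Lemma~\ref{lem.prop1} to conclude.
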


\begin{proof}
It suffices to show that if $\tilde I_i$ is non-empty, so is $\tilde I_{i+1}$. In the
following we fix
$i\ge -1$.
We set $\sM=N_{i+1}\setminus N_i \subset N_{i+1}$, which we assume to be non-empty else the problem is solved.
We choose a linear order $m_1\prec m_2 \prec \ldots $  of $\sM=\{m_1, m_2,\ldots \}$. For
${\rm card}(\sM)\ge j\ge 1$ we define a closed subset
 \[
 \tilde I^{j}_i=\{ g\in \tilde I_i\,|\,
 g^{(m)}=0 \text{ for } m\prec  m_j, m\in \sM \} \subset \tilde I.
\]
Set $\tilde I^j_i=\tilde I_{i+1}$ for $j> {\rm card}(\sM)$.
 This is a  decreasing sequence of subsets of $\tilde I_i$ such that
$$\cap_{ j\ge 1 } \tilde I^{j}_i =\tilde I_{i+1}.$$
Thus we reduce the problem to showing that if $\tilde I^{j}_i \neq \varnothing$ for some $j\ge 1$,  then  $\tilde I^{j+1}_i \neq \varnothing$ for
 by Lemma~\ref{lem.prop1}   we then conclude
 $\tilde I_{i+1} \neq \varnothing.$

 We define a map $\Phi^m\colon  I\to  I$ by
\ga{}{  \Phi^m(g)= \frac{\sigma (g)- \underline{\alpha}^{ m }g}{1- \underline{\alpha}^{
      m}} \notag}
for $m\in \N^b \setminus\{ 0 \}$.
Then we obviously have $\Phi^{m}( g)^{(m)}=0, \ \Phi^{m}( g)(0)=1.$
In order to conclude the proof of the Claim~\ref{cl.prop2}, it remains to prove
Claim~\ref{cl.prop4}. Indeed, we then get $\Phi^{m_j} (\tilde I^j_i )\subset \tilde I^{j+1}_i$.
\end{proof}

\begin{claim}\label{cl.prop4}
$|\Phi^{m}( g)^{( n)} | \le |g^{( n)} | $ for $g\in \tilde I_i$, $n\in \N^b$ and $m\in \sM$.
\end{claim}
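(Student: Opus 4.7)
The plan is to reduce the claim to an explicit estimate on the ratio $\frac{\underline{\alpha}^n - \underline{\alpha}^m}{1 - \underline{\alpha}^m}$ and exploit the ultrametric inequality together with the defining property of $\sM = N_{i+1}\setminus N_i$. Since $\sigma$ acts diagonally on $A$ via $T_j \mapsto \alpha_j T_j$, it sends a monomial $T^n$ to $\underline{\alpha}^n T^n$. Hence for $g = \sum_n g^{(n)} T^n$,
\[
\Phi^m(g) \;=\; \frac{\sigma(g) - \underline{\alpha}^m g}{1 - \underline{\alpha}^m} \;=\; \sum_{n\in \N^b} g^{(n)}\,\frac{\underline{\alpha}^n - \underline{\alpha}^m}{1 - \underline{\alpha}^m}\, T^n,
\]
so it suffices to verify that $\bigl|\frac{\underline{\alpha}^n-\underline{\alpha}^m}{1-\underline{\alpha}^m}\bigr|\le 1$ whenever $g^{(n)}\ne 0$ and $g\in \tilde I_i$.

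I would then split according to $n$. For $n=0$ the ratio equals $1$, so equality holds. For $n\in N_i$ we have $g^{(n)}=0$ by the definition of $\tilde I_i$, so there is nothing to prove. The remaining case is $n\in \N^b\setminus(N_i\cup\{0\})$, and this is where the choice of $\sM$ plays its role.

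For such $n$ the definition of $N_i$ gives $|1-\underline{\alpha}^n| < |\lambda^i|$; since the value group of $E$ is $|\lambda|^{\Z}$, this forces $|1-\underline{\alpha}^n| \le |\lambda^{i+1}|$. On the other hand $m\in \sM = N_{i+1}\setminus N_i$ means $|\lambda^{i+1}|\le |1-\underline{\alpha}^m| < |\lambda^i|$, and for the same reason $|1-\underline{\alpha}^m|=|\lambda^{i+1}|$. Writing $\underline{\alpha}^n-\underline{\alpha}^m = (1-\underline{\alpha}^m)-(1-\underline{\alpha}^n)$ and applying the ultrametric inequality yields
\[
|\underline{\alpha}^n-\underline{\alpha}^m| \;\le\; \max\bigl(|1-\underline{\alpha}^n|,\,|1-\underline{\alpha}^m|\bigr) \;=\; |1-\underline{\alpha}^m|.
\]
Therefore the ratio has norm $\le 1$, and $|\Phi^m(g)^{(n)}| \le |g^{(n)}|$ as required.

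There is essentially no obstacle here: the content is bookkeeping around $N_i$ and $\sM$. The only subtle point worth flagging in the write-up is the use of the fact that values of $|\cdot|$ on $E$ lie in $|\lambda|^{\Z}$, which is what converts the strict inequality $|1-\underline{\alpha}^n|<|\lambda^i|$ into the non-strict $|1-\underline{\alpha}^n|\le |\lambda^{i+1}|$ needed to compare with $|1-\underline{\alpha}^m|$.
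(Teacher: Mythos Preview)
Your proof is correct and follows essentially the same route as the paper: compute $\Phi^m(g)^{(n)} = \frac{\underline{\alpha}^n-\underline{\alpha}^m}{1-\underline{\alpha}^m}\,g^{(n)}$, dispose of $n\in N_i$ using $g^{(n)}=0$, and for the remaining $n$ bound the ratio via the ultrametric inequality using $|1-\underline{\alpha}^n|\le|1-\underline{\alpha}^m|$. Your only addition is making explicit the discreteness of the value group and the case $n=0$, both of which the paper handles implicitly.
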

\begin{proof}
As $\Phi^m(g)^{(n)}=g^{(n)}=0$ for $n \in N_i$ we assume that $ n \notin N_i $. This implies
$|1-\underline\alpha^{n}| \le |1-\underline\alpha^{m} |$.
We have
$$
\Phi^m(g)^{( n)} = \frac{\underline\alpha^{ n} - \underline\alpha^{ m}}{1- \underline\alpha^{m}}  g^{( n)}. $$
So
\ml{}{
|\Phi^{m}(g)^{( n)} |=
\left|   \frac{\underline\alpha^{ n} - \underline\alpha^{m}}{1-
    \underline\alpha^{ m}}  \right| |g^{(n)}| \le \\ \frac{\max ( |1- \underline\alpha^{
    n}|, |1 - \underline\alpha^{ m}|)}{|1- \underline\alpha^{ m}|}|g^{(n)}|    \le |g^{(n)}|. \notag
}
This finishes the proof.
\end{proof}

Lemma~\ref{lem.prop1}  and Claim~\ref{cl.prop2}    imply that $\cap_{i} \tilde I_i$  is
non-empty, say it contains $h$. Then $h^{(n)}=0$ for all $n\in \N^b\setminus\{ 0 \}$, so
 $h \in I$ is a constant and hence $1\in I$. 
This  finishes the proof of Proposition~\ref{prop1}.
\end{proof}

In the next proposition we combine Proposition~\ref{prop1} with a ``weight'' argument in
order to deduce that $I$ is homogeneous under suitable assumptions.
We fix a complex embedding  $\iota\colon \QQl \hookrightarrow \C$.

\begin{prop}\label{prop2} Let $\sigma\in {\rm GL}_b(\sO_E)$ be the diagonal matrix $\sigma
  = {\rm diag}(\alpha_1,
\ldots ,  \alpha_b )$.
 We assume  that  $|\iota
 (\alpha_i)|=|\iota (\alpha_j)| \neq 1 $ for all $1\le i,j \le b$.  Let
 $I\subset A$ be a radical  ideal
with $\sigma (I)=I$.  Then $I$ is homogeneous.
\end{prop}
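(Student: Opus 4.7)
The plan relies on the weight hypothesis $|\iota(\alpha_i)|=q\neq 1$, which gives $|\iota(\underline{\alpha}^n)|=q^{|n|}$ for every $n\in\N^b$. Two consequences follow at once: first, $\underline{\alpha}^n\neq 1$ for $n\neq 0$, so the hypothesis of Proposition~\ref{prop1} is satisfied and, assuming $I\neq A$, one has $I\subset M$; second, any multiplicative relation $\prod_i\alpha_i^{a_i}=1$ with $(a_i)\in\Z^b$ forces $\sum_i a_i=0$. This second fact, purely algebraically, implies that the Zariski closure $H\subset \G_m^b$ of the cyclic subgroup generated by $\alpha=(\alpha_1,\ldots,\alpha_b)$ satisfies $H^\perp\subseteq\{(a_i)\colon\sum_i a_i=0\}=\Delta^\perp$; dually, the diagonal torus $\Delta\cong \G_m$ is contained in $H$.

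Given $g\in I$ and a closed point $x\in V(I)$, the $\sigma$-invariance of $I$ together with radicality gives $g(\sigma^k x)=0$ for every $k\in\Z$. Equivalently, the Tate series
\[
 H_{g,x}(U_1,\ldots,U_b):=g(U_1 x_1,\ldots,U_b x_b)=\sum_n g^{(n)}x^n U^n
\]
in $\overline{E}\langle U_1,\ldots,U_b\rangle$ vanishes at every orbit point $\alpha^k\in(\mathcal O_E^\times)^b$. Were one to know that $H_{g,x}$ must then vanish along the diagonal $\{(\lambda,\ldots,\lambda)\colon\lambda\in\mathcal O_E^\times\}\subset \Delta\subset H$, then $g(\lambda x)=\sum_d\lambda^d g_d(x)=0$ would hold for every $\lambda\in\mathcal O_E^\times$, and Weierstrass preparation applied to the one-variable power series $\sum_d\lambda^d g_d(x)$ in $\lambda$ would force each $g_d(x)=0$. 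Since $I$ is radical and Tate algebras are Jacobson, the identity $g_d(x)=0$ for all $x\in V(I)$ would then give $g_d\in I$, whence $I$ is homogeneous.

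The main obstacle is therefore to upgrade the vanishing of $H_{g,x}$ on the discrete orbit $\{\alpha^k\}_{k\in\Z}$ to vanishing along the diagonal $\Delta$. In the purely algebraic category this would be immediate from $\Delta\subset\overline{\langle\alpha\rangle}^{\,{\rm Zar}}=H$; the difficulty in the rigid-analytic category is that a Tate series can vanish on a discrete set without vanishing on its algebraic Zariski closure. I expect the resolution to combine two inputs: the special structure of $H_{g,x}$ as the pullback of $g$ along the group homomorphism $U\mapsto U\cdot x$ restricted to $\G_m^b$, and the $\ell$-adic accumulation of the orbit $\{\alpha^k\}$ inside a positive-dimensional closed $\ell$-adic Lie subgroup of $(\mathcal O_E^\times)^b$. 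Together with the algebraic fact $\Delta\subset H$, these should suffice to force $H_{g,x}$ to vanish along the diagonal by a Weierstrass-type uniqueness argument in the corresponding affinoid slice.
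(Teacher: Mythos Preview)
Your setup captures the two essential observations that the paper also uses: the weight condition forces $\underline\alpha^n\ne 1$ for $n\ne 0$ (so Proposition~\ref{prop1} gives $I\subset M$), and any multiplicative relation among the $\alpha_i$ has degree zero, so the Zariski closure of $\langle\sigma\rangle$ inside the diagonal torus contains the diagonal $\G_m$. The divergence is in how this second fact is exploited, and your route has a genuine gap that you yourself flag.

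The difficulty you identify is real and, in the pointwise formulation you chose, not easily repaired. The orbit $\{\alpha^k\}$ is a discrete countable subset of the affinoid polydisc, and a Tate series can certainly vanish on such a set without vanishing on its algebraic Zariski closure; the $\ell$-adic closure of $\{\alpha^k\}$ is in general a proper $\ell$-adic Lie subgroup of the algebraic group $H$, and there is no reason for it to contain the diagonal $\Delta$. So your hoped-for ``accumulation plus Weierstrass'' argument does not obviously land on $\Delta$.

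The paper sidesteps this entirely by abandoning the pointwise picture. Since $I\subset M$, one has $A/I\hookrightarrow \hat A/\hat I$, so it suffices to show that the image $(I+M^n)/M^n\subset A/M^n$ is graded for every $n$. Now $A/M^n$ is a \emph{finite-dimensional} $E$-vector space on which $\sigma$ acts linearly; the Zariski closure $D$ of $\langle\sigma\rangle$ in $\mathrm{GL}(A/M^n)$ is a diagonalizable algebraic group, and your own computation shows $D\supset\Delta\cong\G_m$. The $\sigma$-stable subspace $(I+M^n)/M^n$ is automatically $D$-stable, hence $\Delta$-stable, and the $\Delta$-weight decomposition is exactly the grading by total degree. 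This is a purely algebraic argument on finite-dimensional representations; no rigid-analytic density is needed. You were one move away: apply the inclusion $\Delta\subset\overline{\langle\sigma\rangle}$ to the finite quotients $A/M^n$ rather than to points of $V(I)$.
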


\begin{proof}
To start with we observe that as a consequence of our assumption on the eigenvalues
$\alpha_i$ we obtain the implication
\begin{equation}\label{eq.prophomo}
 \underline \alpha^{n} =1 \quad \Rightarrow \quad n_1 + \cdots + n_b=0
\end{equation}
 for $n\in \Z^b$.

  As $I$ is the intersection of finitely many minimal prime ideals containing
  it and as $\sigma$ permutes these prime ideals, we can assume without loss of generality that
  $I$ itself is a prime ideal.
The assumptions on the eigenvalues $\alpha_i$ in Proposition~\ref{prop1} are satisfied
by~\eqref{eq.prophomo}, so  we see that $I\subset M=(T_1, \ldots , T_b)$.

One has to check that the homogeneous components of an element $g\in I$ are in $I$. Denote by
$\hat A$ and $\hat I$ the completion at the maximal ideal $M=(T_1, \ldots , T_b)$. As the map
$A/I\to \hat A/\hat I$ is injective, it suffices to show that the
homogeneous components of $g$ are in $\hat I$. This is equivalent to saying that the homogeneous components of
$g$ are in the ideal $ I+M^n/ M^n\subset  A/ M^n$ for all $n>0$, i.e.\ that  $ I+M^n/ M^n$
is graded by degree.

Let $D\hookrightarrow {\rm GL}_{b,E}$ be the smallest linear algebraic subgroup (over $E$) containing $\sigma^m$
for all $m\in \Z$. We can determine the diagonalizable group $D$ a follows. Let $T\hookrightarrow {\rm GL}_{b,E}$ be the
standard maximal torus. Then $D$ is the intersection of the kernels of all characters
$\chi_n\colon T\to \mathbb G_m$ with $\chi_n(\underline \alpha)=1$. Here $n\in\Z^b$ and $\chi_n(\underline
\alpha) =\underline \alpha^n$. By~\eqref{eq.prophomo} we see that those $\chi_n$ are
trivial on the diagonal torus $\mathbb G_m$, so $ D$ contains the diagonal $\mathbb G_m$.

But $D$   acts on $A/M^n$ for any $n>0$ and this action preserves  $I+M^n/M^n$. The weight
decomposition with respect to the action of the diagonal $\mathbb G_m\subset T$ implies
that $I+M^n/M^n$ is graded by degree.
  \end{proof}

\section{Multiplicative formal groups} \label{sec:formgr}

\noindent
In this section we prove our main theorem on multiplicative formal Lie groups, Theorem~\ref{main.thm}.

\subsection{Basics}\label{subsec:formbas}

We recall some basic results on multiplicative formal groups and we define a Zariski
topology on its group of $\QQl$-points.

Let $E$ be a finite extension of $\Q_\ell$ and fix an embedding $E\hookrightarrow \QQl$.
For an abelian pro-finite group $\pi$ we let
\[
R=\sO_E \llbracket \pi \rrbracket = \lim_\Lambda \sO_E [\Lambda] = \lim_{m, \Lambda} (\sO_E/ (\lambda^m)) [\Lambda]
\]
where $\Lambda$ runs through the system of finite quotients
of $\pi$ and $(\lambda)\subset \sO_E$ is the maximal ideal. The quotient $(\sO_E/ (\lambda^m)) [\Lambda]$ is finite and is endowed with the  discrete topology.
Then $\sO_E \llbracket \pi \rrbracket$  is endowed with the limit topology.

  On $\sO_E \llbracket \pi
\rrbracket$ we consider the usual completed Hopf algebra structure over $\sO_E$, for example  the
multiplication   is given by
\[
\sO_E \llbracket \pi \rrbracket\, \widehat \otimes_{\sO_E} \sO_E \llbracket \pi \rrbracket
\to \sO_E \llbracket \pi \rrbracket \quad [e_1]\otimes [e_2]\mapsto [e_1+ e_2].
\]
For another pro-finite group $\pi'$ the
continuous  Hopf algebra homomorphisms
 $\sO_E \llbracket \pi
\rrbracket\to \sO_E \llbracket \pi' \rrbracket$ are in bijection with the continuous
homomorphisms $\pi\to \pi'$.  One can show this by reducing to $\pi$ and $\pi'$ finite and then applying
Cartier duality~\cite[Exp.~VIIB, 2.2.2 Prop.]{SGA3}.  We denote the prime ideal generated by $1-[e]$
for all $e\in \pi$ by $M$, i.e.\ $M$ is the kernel of the counit $  \sO_E \llbracket \pi
\rrbracket \to \sO_E$, $[e]\mapsto 1$ for $e \in \pi$.

In the following we assume that $\pi$ is a finitely generated free $\Z_\ell$-module of rank $b$. In
this situation one says that the associated formal
group $\sG=\Spf (\sO_E \llbracket \pi \rrbracket)$ is a {\it multiplicative} $b$-{\it
  dimensional formal Lie group over} $\sO_E$.
A closed formal subgroup $\sH \hookrightarrow \sG$ is called a {\it formal Lie subgroup}
if it  corresponds to a quotient morphism of $\sO_E$-algebras $\sO_E \llbracket \pi \rrbracket \to
\sO_E \llbracket \pi/\pi' \rrbracket$, where $\pi/\pi'$ is a torsion free quotient $\Z_\ell$-module of $\pi$.

Once we choose a $\Z_\ell$-basis $e_1,\ldots , e_b$ of  $\pi$ we  obtain an  isomorphism
\[
\sO_E\llbracket \pi \rrbracket \cong
\sO_E \llbracket X_1,\ldots , X_b \rrbracket
\]
defined by \[[e_i] \mapsto 1+X_i\]
and the comultiplication becomes
\ml{}{ \sO_E \llbracket X_1,\ldots , X_b \rrbracket\to \sO_E\llbracket Y_1,\ldots , Y_b, Y'_1,\ldots, Y'_b \rrbracket \\
X_i\mapsto Y_i+Y'_i+ Y_i Y'_i. \notag}

 We  identify $\sG(\QQl)=\Hom_{\sO_E}( \sO_E \llbracket \pi \rrbracket ,\QQl)$ with the
 group of continuous homomorphisms $\Hom_{\rm cont}(\pi, \QQl^\times)$.
Recall that by~\cite[Prop.A.2.2.3]{GL96},  $\sG(\QQl )$ can also be
identified with the maximal spectrum $\Spm(\mathfrak R)$ of $\mathfrak R = R
\otimes_{\sO_E} \QQl$. Furthermore, $\mathfrak R$ is a noetherian
Jacobson ring of dimension $b$ ({\it loc.cit.}).

\medskip

As a maximal spectrum, $\sG(\overline \Q_\ell)$  is endowed with a {\it Zariski topology}, which
is  the topology on $\sG(\overline \Q_\ell)$ we use in the sequel.
\begin{lem} \label{lem:torsiondense}
The subset of torsion points of $\sG(\overline \Q_\ell)$ is dense.
\end{lem}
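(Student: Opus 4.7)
The plan is to reformulate the statement algebraically and then verify it by induction on $b=\dim_{\Z_\ell} \pi$, with Weierstrass preparation doing the real work at the base case.

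First, fix a $\Z_\ell$-basis $e_1,\dots,e_b$ of $\pi$ and use the isomorphism $R\cong \sO_E\llbracket X_1,\dots,X_b\rrbracket$ from the text, $[e_i]\mapsto 1+X_i$. A continuous character $\chi\colon \pi\to \QQl^\times$ has finite image (i.e.\ is torsion) if and only if each $\chi(e_i)=\zeta_i$ is a $\ell^{n_i}$-power root of unity; it corresponds under the identification to the point $(X_1,\dots,X_b)=(\zeta_1-1,\dots,\zeta_b-1)$, which lies in the open unit polydisc since $|\zeta_i-1|<1$. Since $\mathfrak R$ is a Jacobson integral domain of dimension $b$, the closure of the torsion set $T\subset \Spm(\mathfrak R)$ equals $V(J)$ where $J=\bigcap_{s\in T}\mathfrak m_s$; density of $T$ is therefore equivalent to $J=(0)$, i.e.\ to the assertion that every nonzero $f\in\mathfrak R$ satisfies $f(s)\ne 0$ for some torsion $s\in T$. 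Multiplying by a suitable power of $\lambda$, we may assume $f\in R$.

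For the base case $b=1$, write $f=\lambda^k h$ with $h\in R\setminus \lambda R$. Weierstrass preparation \cite[Sec.~3.1]{FvdP04} factors $h=u\cdot P$, where $u\in R^\times$ and $P$ is a distinguished polynomial of some degree $d$. The zeros of $f$ in the open unit disc are thus exactly the roots of $P$, a set of cardinality at most $d$. On the other hand, the torsion points $\{\zeta-1 : \zeta^{\ell^n}=1,\ n\ge 1\}$ form an infinite subset of the open unit disc, so $f$ must be nonzero at at least one of them.

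For the inductive step, expand $f=\sum_\alpha f_\alpha(X_1,\dots,X_{b-1}) X_b^\alpha$ and pick $\alpha$ with $f_\alpha\ne 0$. The inductive hypothesis, applied to $f_\alpha$, yields $\ell$-power roots of unity $\zeta_1,\dots,\zeta_{b-1}$ in some finite extension $E'/E$ such that $f_\alpha(\zeta_1-1,\dots,\zeta_{b-1}-1)\ne 0$. The specialization $g(X_b):=f(\zeta_1-1,\dots,\zeta_{b-1}-1,X_b)$ is then a nonzero element of $\sO_{E'}\llbracket X_b\rrbracket$, and the one-variable base case, applied over $E'$, furnishes an $\ell$-power root of unity $\zeta_b$ with $g(\zeta_b-1)\ne 0$. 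The tuple $(\zeta_1-1,\dots,\zeta_b-1)$ is the sought torsion point. The main subtlety is keeping track of the coefficient field, which enlarges at each induction step but remains a finite extension of $\Q_\ell$, so that Weierstrass preparation continues to apply; beyond this the argument is routine.
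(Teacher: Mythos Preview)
Your proof is correct. The reformulation via the Jacobson property is sound, the Weierstrass base case is fine, and the inductive specialization is justified because substituting $X_i\mapsto \zeta_i-1$ with $|\zeta_i-1|<1$ defines a continuous $\sO_E$-algebra map $\sO_E\llbracket X_1,\dots,X_b\rrbracket\to \sO_{E'}\llbracket X_b\rrbracket$.

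The paper takes a different, more structural route. It observes that the kernel $J_n$ of $R\to \sO_E[\pi/\ell^n\pi]$ is the ideal generated by $[\ell^n](M)$, that $\sO_E[\pi/\ell^n\pi]\otimes_{\sO_E}\QQl$ is reduced (group algebra of a finite abelian group over a field of characteristic zero) with maximal spectrum exactly the $\ell^n$-torsion points, and that $\bigcap_n J_n=0$ because $R=\varprojlim_n \sO_E[\pi/\ell^n\pi]$. So any $g$ vanishing on all torsion points lies in every $J_n$, hence is zero. This argument is shorter and uses the formal group structure directly; your argument is more hands-on and avoids the group-ring identification, relying only on the analytic structure of the power series ring. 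Both are natural, but the paper's version makes the role of the multiplicative structure more transparent, while yours would adapt more readily to density questions for other infinite collections of points in the open polydisc.
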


\begin{proof} We have to show that  if $g\in \mathfrak R$  vanishes on all torsion points
  then $g=0$. Without loss of generality $g\in \sO_E \llbracket \pi \rrbracket$.
Let $[\ell^n ]: \sO_E \llbracket \pi \rrbracket \to \sO_E \llbracket \pi \rrbracket   $ be the
morphism induced by $\ell^n$-multiplication on $\pi$.
Let $J_n$ be the ideal generated by $[\ell^n](M)$. 
For all $n>0$ the ring
\begin{equation}\label{eq:lem3.1..}
\sO_E \llbracket \pi
\rrbracket/J_n= \sO_E [ \pi/\ell^n \pi ]
\end{equation}
is flat over $\sO_E$ and  its tensor product with $E$ is reduced.
As the finite group $\Spm(\sO_E [ \pi/\ell^n \pi ]\otimes_{\sO_E} \QQl)$ identifies with the
$\ell^n$-torsion points of $\sG(\QQl)$, we see that the image of $g$ in the
rings~\eqref{eq:lem3.1..} vanishes for all $n>0$. As the limit over $n$ of the
rings~\eqref{eq:lem3.1..} is $\sO_E \llbracket \pi
\rrbracket$ by definition, we get $g=0$.
\end{proof}

\subsection{Exponential map}

We recall  some well-known facts on the $\ell$-adic exponential map,
see~\cite[Ch.~12]{Cas86}.
 We consider the $\ell$-adic absolute value $|-|$ on $\QQl$ with $|\ell| =1/\ell$.
For $\rho\in (0,1)\cap |\QQl^\times |$  we have extensions of rings
\[
\sO_E \llbracket X_1,\ldots , X_b \rrbracket \otimes_{\sO_E} E \subset E\langle  X_1,\ldots ,
X_b  \rangle_{\rho}
\]
where the Tate ring on the right is the completion of the polynomial ring with respect to the $\rho$-Gauss norm.
Recall that the  $\rho$-{\it Gauss norm} of $g=\sum_{n} g^{(n)} \underline X^n$ is defined by
$|g|=\sup_n | g^{(n)}| \rho^n $.

Under the additional assumption $\rho <\ell^{-1/(\ell-1)}$,
we have an exponential isomorphism
\[
 E\langle  X_1,\ldots , X_b  \rangle_{\rho} \xrightarrow{\sim}  E\langle  T_1,\ldots , T_b  \rangle_{\rho} , \ X_i \mapsto \exp (T_i)-1.
\]

Let $B(\rho)\subset \QQl^b$ be the polydisc consisting of points with maximum norm $\le \rho$, where
$\rho \in |\QQl^\times |$. As $B(\rho)$ can be identified with the maximal spectrum of \[
\QQl \langle  T_1,\ldots , T_b  \rangle_{\rho} =\colim_E E\langle T_1, \ldots ,
T_b\rangle_\rho , \] we can endow it with a Zariski
topology.

Independently of the choice of coordinates one can identify $B(\rho)$ with $\Hom_{\ZZl}
(\pi \otimes_{\Z_\ell} \beta \ZZl , \ZZl)$, where $\beta \in \QQl$ is such that $|\beta| =1/\rho $.
So more generally for any finitely generated  $\ZZl$-submodule $\pi'\subset
\pi\otimes_{\Z_\ell} \QQl$ containing $\pi$,  there exists a closed polydisc $ B_{\pi'}
\subset B(1) $.

Similarly, we let $\sG(\QQl)(\rho)$ be the subgroup of  $\sG(\QQl)$
  which consists of the continuous homomorphisms
$\chi\colon \pi\to \QQl^\times $  with $|\chi(e)-1|\le \rho $ for all $e \in \pi$.
We  then obtain an injective group homomorphism
\[
  \exp_\rho\colon B(\rho) \to \sG(\QQl) \quad \text{ for } \rho <\ell^{-1/(\ell-1)}
\]
which is continuous with respect to the Zariski topology and the image of which  is
$\sG(\QQl)(\rho)$.


\begin{lem}\label{lem.denserho} Let $S\subset \sG(\QQl )$
be an irreducible closed subset  and $\rho\in (0,1)\cap |
 \QQl^\times |$ be such that
the subset $S \cap \sG (\QQl)(\rho)$ is non-empty. Then  $S \cap \sG (\QQl)(\rho)$ is
dense in  $S$.

\end{lem}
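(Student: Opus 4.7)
My plan is to show $Z = S$, where $Z$ denotes the Zariski closure of $S\cap\sG(\QQl)(\rho)$ in $\sG(\QQl)$. Since $S$ is irreducible and closed, write $S = V(\mathfrak{p})$ for a prime ideal $\mathfrak{p}\subset\mathfrak{R}$. Choosing a $\Z_\ell$-basis $e_1,\dots,e_b$ of $\pi$ and setting $X_i=[e_i]-1$, the set $\sG(\QQl)(\rho)$ becomes the closed polydisc $\{|X_i|\le\rho\}$, whose affinoid algebra is the Tate algebra $B_\rho := E\langle X_1,\dots,X_b\rangle_\rho$. Because $\rho<1$, every element of $\mathfrak{R}$ has bounded $E$-coefficients and so converges on this disc, yielding an injective $E$-algebra restriction map $\varphi\colon\mathfrak{R}\to B_\rho$. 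Under $\varphi$ we have $S\cap \sG(\QQl)(\rho) = \Spm(B_\rho/\mathfrak{p}B_\rho)$, and this set is non-empty by hypothesis.

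The closure $Z$ equals $V(I)$, where $I := \{f\in\mathfrak{R} : \varphi(f)\text{ vanishes on }\Spm(B_\rho/\mathfrak{p}B_\rho)\}$. Since Tate algebras and their quotients are Jacobson, this ideal is $I = \varphi^{-1}(\sqrt{\mathfrak{p}B_\rho})$. The inclusion $\mathfrak{p}\subseteq I$ is clear so $Z\subseteq S$; primality of $\mathfrak{p}$ then reduces the density statement $Z=S$ to the containment
\[
\mathfrak{p}B_\rho\cap\mathfrak{R}\subseteq\mathfrak{p}.
\]

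To prove the last containment I would localize at the maximal ideals $\mathfrak{m}\subset\mathfrak{R}$ and $\mathfrak{m}'\subset B_\rho$ corresponding to $s_0$, which share a common residue field $E_{s_0}\subset\QQl$. The crucial observation is that both completions $\widehat{\mathfrak{R}_{\mathfrak{m}}}$ and $\widehat{B_{\rho,\mathfrak{m}'}}$ are canonically the formal power series ring $E_{s_0}\llbracket X_1-s_0^{(1)},\dots,X_b-s_0^{(b)}\rrbracket$ in local coordinates at $s_0$, because the $\mathfrak{m}$-adic and $\mathfrak{m}'$-adic topologies both coincide with the formal topology at $s_0$. Hence the local ring map $\mathfrak{R}_{\mathfrak{m}}\to B_{\rho,\mathfrak{m}'}$ becomes an isomorphism after completion, is therefore flat, and being a flat local ring map is automatically faithfully flat. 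Faithful flatness gives $\mathfrak{p}B_{\rho,\mathfrak{m}'}\cap \mathfrak{R}_{\mathfrak{m}} = \mathfrak{p}\mathfrak{R}_{\mathfrak{m}}$, and intersecting with $\mathfrak{R}$ yields $\mathfrak{p}B_\rho\cap\mathfrak{R}\subseteq \mathfrak{p}\mathfrak{R}_{\mathfrak{m}}\cap\mathfrak{R} = \mathfrak{p}$, the last equality holding because $\mathfrak{p}\subseteq\mathfrak{m}$ is prime.

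The main obstacle is the completed-local identification $\widehat{\mathfrak{R}_{\mathfrak{m}}}\cong \widehat{B_{\rho,\mathfrak{m}'}}$: while geometrically evident (both describe the formal neighborhood of $s_0$ in $\sG$), it requires a small direct check comparing the two natural topologies on $E[X_1,\dots,X_b]$ near $s_0$. Once this identification is in place, the rest of the argument is a routine application of descent by faithful flatness combined with the Jacobson property of Tate algebras.
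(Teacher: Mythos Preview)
Your argument is essentially correct, but there is a slip in the setup: since $\mathfrak R = R\otimes_{\sO_E}\QQl$ has $\QQl$-coefficients, there is no map $\mathfrak R\to E\langle X_1,\dots,X_b\rangle_\rho$. You should either take $B_\rho=\QQl\langle X_1,\dots,X_b\rangle_\rho$, or (as the paper does) first descend $S$ to a finite extension of $E$ and work with an integral quotient $A$ of $\sO_E\llbracket\pi\rrbracket$. Once this is fixed, your reduction to the containment $\mathfrak p B_\rho\cap\mathfrak R\subseteq\mathfrak p$ via the Jacobson property, and the completion/faithful-flatness argument at a chosen point $s_0\in S\cap\sG(\QQl)(\rho)$, both go through; the identification of completions is just the observation that $X_i-s_0^{(i)}$ form a regular system of parameters in both local rings and the map is the identity on them.

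The paper's proof reaches the same conclusion more directly. After descending to finite $E$, it writes $S$ via an integral $\sO_E$-flat quotient $A$ of $\sO_E\llbracket\pi\rrbracket$ and uses that the extension $\sO_E\llbracket\pi\rrbracket\to\QQl\langle X_1,\dots,X_b\rangle_\rho$ is flat \emph{globally}. Then $A\to A\otimes_{\sO_E\llbracket\pi\rrbracket}\QQl\langle X\rangle_\rho$ is flat with non-zero target (by the non-emptiness hypothesis), hence injective; since the target is reduced Jacobson with maximal spectrum $S\cap\sG(\QQl)(\rho)$, the intersection of its maximal ideals pulls back to zero in $A$, and the density follows in one line. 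Your local-completion step is in effect a self-contained verification of the flatness the paper simply asserts; the gain is that you do not quote an external flatness fact, the cost is a longer argument and the need to pick a base point $s_0$.
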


\begin{proof}
After replacing $E$ by a finite extension we can assume that $S$ is given by an integral quotient
ring $A$ of $\sO_E \llbracket \pi \rrbracket$ which is flat over $\sO_E$.
The map $A\to A\otimes_{\sO_E \llbracket \pi \rrbracket}  \QQl \langle  X_1,\ldots ,
X_b  \rangle_{\rho}   $ is injective since it is flat and non-zero. The codomain of this map is a reduced
Jacobson ring~\cite[Prop.~2.2]{EK20}  and its maximal ideals correspond to $ S \cap \sG (\QQl)(\rho) $.
As the closure of  $S \cap \sG (\QQl)(\rho)$ corresponds to the intersection of these
maximal ideals inside $A$, which is the zero ideal, we deduce Lemma~\ref{lem.denserho}.
\end{proof}

\subsection{Main Theorem}

This subsection contains the technically central result of our note.
Let $E$ be a finite extension of $\Q_\ell$ together with a fixed embedding
$E\hookrightarrow \QQl$ and let $\sG=\Spf( \sO_E \llbracket \pi \rrbracket )$ be a multiplicative
$b$-dimensional formal Lie group over  $\sO_E$. Let $\sigma\colon \pi\to \pi$ be an automorphism such
that the $\Q_\ell$-linear map $\sigma\colon \pi\otimes_{\Z_\ell} \Q_\ell\to \pi\otimes_{\Z_\ell}
\Q_\ell$ is {\it semi-simple} with eigenvalues $\alpha_1, \ldots , \alpha_b\in \QQl$.
We also denote by $\sigma\colon\sG \to \sG$ the corresponding automorphism of formal groups.
Recall that we fix a complex embedding $\iota\colon \overline \Q_\ell \hookrightarrow \C$.

We define quasi-linearity following de Jong~\cite[Def.~1.1]{deJ00}.

\begin{defn}\label{def:ql}
  A closed subset $S\subset \sG(\QQl)$  is called quasi-linear if it can be written in
   the form
  \[
    S=\bigcup_{r\in I} s_r   \sH_r(\QQl),
  \]
 where
   $I$ is finite, the elements
 $s_r\in \sG(\overline \Q_\ell )$ are torsion and the  $\sH_r$ are  formal Lie
 subgroups of $\sG$.
\end{defn}

\begin{thm}\label{main.thm}
 Assume that for all $i,j\in \{1,\ldots , b \}$ we have $|\iota (\alpha_i)|=|\iota (\alpha_j)| \ne 1$.
 Let $S\subset \sG(\QQl )$ be a Zariski closed subset with $\sigma(S)=S$. Then
 $S$ is quasi-linear.
\end{thm}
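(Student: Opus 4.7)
The strategy is to linearize the problem via the $\ell$-adic logarithm, apply the Tate-algebra results Propositions~\ref{prop1} and~\ref{prop2}, and then invoke de Jong's theorem.

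\emph{Reductions.} First I would reduce to the case that $S$ is irreducible. Since $\sigma$ permutes the finitely many irreducible components of $S$ and the eigenvalue hypothesis is stable under passing to powers of $\sigma$, after replacing $\sigma$ by a suitable power I may assume each component is individually $\sigma$-stable. After enlarging $E$, I may also assume that $\sigma$ is diagonal on $\pi\otimes_{\Z_\ell}\sO_E$ with eigenvalues $\alpha_1,\ldots,\alpha_b\in \sO_E^\times$.

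\emph{Finding a torsion point.} I would then replace $S$ by $\ell^n S$, the image under the finite étale isogeny $[\ell^n]\colon \sG\to \sG$; this remains irreducible, Zariski closed and $\sigma$-stable. Since $\pi$ is pro-$\ell$, any continuous character $\chi\in S$ takes values in $\mu_{\ell^\infty}\cdot U^{(1)}\subset \QQl^\times$, and hence for $n$ large $\chi^{\ell^n}$ lies in $\sG(\QQl)(\rho)$ for some $\rho<\ell^{-1/(\ell-1)}$; thus $\ell^n S\cap \sG(\QQl)(\rho)$ is non-empty. Via $\exp_\rho^{-1}$ this intersection transfers to a Zariski closed subset $V(I)\subset B(\rho)=\Spm(A)$ for the Tate algebra $A=E\langle T_1,\ldots,T_b\rangle_\rho$. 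Because $\exp$ intertwines the group law on $\sG$ with addition, $\sigma$ acts on $A$ by the linear substitution $T_j\mapsto \sum_i \sigma_{ij} T_i$, and $I$ is $\sigma$-stable. The hypothesis $\underline\alpha^{n}\ne 1$ for $0\ne n\in \N^b$ of Proposition~\ref{prop1} follows from $|\iota(\underline\alpha^{n})|=|\iota(\alpha_1)|^{n_1+\cdots+n_b}\ne 1$, so Proposition~\ref{prop1} forces $I\subset M$, i.e.\ $1\in \ell^n S$; in particular $S$ contains an $\ell^n$-torsion point.

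\emph{Conicalness and descent.} Next, since $I$ is a radical ideal (the vanishing ideal of a Zariski closed subset in a reduced Jacobson ring), Proposition~\ref{prop2} shows $I$ is homogeneous, so $V(I)$ is a cone; in particular $\ell\cdot V(I)\subset V(I)$. Translating back, the $\ell$-th power map on $\sG$ sends $\ell^n S\cap \sG(\QQl)(\rho)$ into itself; by Lemma~\ref{lem.denserho} this intersection is Zariski dense in the irreducible $\ell^n S$, so $\ell\cdot(\ell^n S)\subset \ell^n S$. De Jong's theorem~\cite[Prop.~1.2(1)]{deJ00} then yields that $\ell^n S$ is quasi-linear. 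The preimage of a quasi-linear set under the isogeny $[\ell^n]$ is again quasi-linear, since each torsion-translated formal Lie subgroup $s_r\sH_r$ pulls back to a finite union of torsion translates of $\sH_r$. The irreducible $S$ sits inside $[\ell^n]^{-1}(\ell^n S)$ with the same dimension as one of its components and therefore equals that component, yielding the result.

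\emph{Main obstacle.} The substantive difficulty is $\ell$-adic analytic and is packaged in Propositions~\ref{prop1} and~\ref{prop2}. The other delicate step is the interplay between the Zariski and $\ell$-adic topologies — producing some $n$ for which $\ell^n S \cap \sG(\QQl)(\rho)\neq \varnothing$, and deploying Lemma~\ref{lem.denserho} to globalize conicalness from the $\ell$-adic neighborhood to all of $\ell^n S$ — which requires care but no deep new ideas; the final descent along $[\ell^n]$ is essentially formal.
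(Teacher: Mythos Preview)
Your approach is the paper's own: reduce to irreducible $S$, push into a small polydisc via $[\ell^n]$, linearize through the logarithm, apply Proposition~\ref{prop2} to obtain conicalness, globalize by Lemma~\ref{lem.denserho}, and conclude with de Jong. Your final descent along $[\ell^n]$ is actually spelled out more explicitly than the paper's one-line ``this implies the theorem.''

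There is, however, one technical slip. You assert that after enlarging $E$ one may take $\sigma$ diagonal on $\pi\otimes_{\Z_\ell}\sO_E$, i.e.\ that this free $\sO_E$-module has a basis of eigenvectors. This can fail under the hypotheses of the theorem. For instance $\sigma=\left(\begin{smallmatrix}0&-p\\1&0\end{smallmatrix}\right)\in GL_2(\Z_2)$ with $p$ an odd prime is semi-simple with eigenvalues $\pm\sqrt{-p}$ of common complex absolute value $\sqrt p\ne 1$, yet its reduction mod $2$ is $\left(\begin{smallmatrix}0&1\\1&0\end{smallmatrix}\right)$, which is a non-diagonalizable unipotent in characteristic $2$; hence $\sigma$ is not diagonalizable over any $\sO_E$. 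Without diagonal coordinates you cannot invoke Proposition~\ref{prop2} as stated on the Tate algebra attached to a $\Z_\ell$-basis of $\pi$. The paper's fix is to pass to the auxiliary $\ZZl$-lattice $\pi'\supset\pi\otimes\ZZl$ spanned by eigenvectors and to work in the polydisc $B_{\pi'}$, on which $\sigma$ is genuinely diagonal; the integer $w$ with $\ell^w\pi'\subset\pi\otimes\ZZl$ measures the discrepancy between $B_{\pi'}$ and the standard polydisc, which is why the paper arranges $[\ell^n](S)\cap\sG(\QQl)(\rho/\ell^w)\ne\varnothing$ rather than merely $[\ell^n](S)\cap\sG(\QQl)(\rho)\ne\varnothing$. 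This is a local repair and does not change the architecture of your argument.
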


\begin{proof} 
  We can
  assume that $S$ is non-empty and irreducible. We fix $\rho\in (0,
  \ell^{-1/(\ell-1)})\cap |\QQl^\times |$.
There exist $\ZZl$-linear independent eigenvectors
$e'_1,\ldots , e'_b\in \pi\otimes_{\Z_\ell} \ZZl$   of $\sigma$ such that $
\pi $ is
contained in $\pi'=\ZZl e'_1 + \cdots +\ZZl e'_b$. We also fix an integer $w>0$ with
$\ell^w\pi'\subset \pi\otimes_{\Z_\ell} \ZZl$.
  
  There exists $n>0$ such that $[\ell^n]( S)\cap \sG(\rho/\ell^w)$ is non-empty. As $[\ell^n]\colon
  \sO_E \llbracket \pi \rrbracket \to  \sO_E \llbracket \pi \rrbracket $ is a finite,
  faithfully flat ring homomorphism, we deduce that $[\ell^n]( S)$ is closed in $\sG(\QQl)$. So
  $\exp_\rho^{-1}( [\ell^n](S) )\cap B_{\pi'}$ is closed and non-empty in the polydisc
  $B_{\pi'}$. The choice of $e'_1,  \ldots , e'_b$ above allows us to identify $B_{\pi'}$
  with the maximal spectrum of the Tate algebra $\QQl\langle T'_1 , \ldots , T'_b
  \rangle$. So   after replacing $E$ by a
  finite extension the Zariski closed subset $\exp_\rho^{-1}( [\ell^n](S) )\cap B_{\pi'}$
  of $ B_{\pi'}$ corresponds to a radical ideal $I\subset  E\langle  T'_1,\ldots , T'_b
  \rangle $.


  We can apply Proposition~\ref{prop2} in order to see that
  $I$ is homogeneous.
  Consequently,  $\exp_\rho^{-1}( [\ell^n](S) )\cap B_{\pi'}$ and therefore also its subset  $\exp_{\rho/\ell^w}^{-1}( [\ell^n](S) )$ is stabilized by
  the homothety $\ell$,
  which is equivalent to the fact that $[\ell^n](S)\cap
  \sG(\rho/\ell^w) $ is stabilized by $[\ell]$.
  As $[\ell^m](S)\cap
  \sG(\rho/\ell^w) $ is Zariski dense in $[\ell^m](S)$ for all $m\ge n$ by
  Lemma~\ref{lem.denserho}, we deduce that $[\ell]$ also stabilizes $[\ell^n](S)$. By a
  result of de Jong~\cite[Prop.~1.2(1)]{deJ00} this implies the theorem.
\end{proof}

\section{Generalized Fourier-Mellin transform}

\noindent
In this section we  consider a generalization of $\ell$-adic cohomology which for tori is called Mellin transform in~\cite[ Prop.~3.1.3]{GL96}
and which for complex abelian varieties is a completion of the Fourier-Mellin transform
in~\cite[Section~1]{BSS18}. The use of this ``Fourier-Mellin transform'' is limited by the
fact that we do not know any sort of inversion formula at the moment.
The only really new
result in this section is Proposition~\ref{prop.keyvanish} which provides a simple direct
approach to generic vanishing.

\subsection{Definition and basic properties}\label{subsec:meldef}

Let $X$ be a separated, connected scheme of finite type over the algebraically closed field
$F$. All cohomology groups we consider will be with respect to the \'etale topology. {\it In the sequel, a tensor
product involving a derived object (module or sheaf)   means a derived tensor product.}

Let  $E$ be a finite extension of $\Q_\ell$ with a fixed embedding $E\hookrightarrow \QQl$.
Let $R$ be a complete noetherian local $\sO_E$-algebra with finite residue field and maximal
ideal $\mathfrak m$.  Then $\mathfrak
R=R\otimes_{\sO_E} \QQl$ is a noetherian Jacobson ring by~\cite[Prop.A.2.2.3]{GL96}.

Let $\sF$ be in $D^b_c(X, \sO_E)$ and let $\rho\colon\pi_1(X)\to
R^\times$ be a continuous character.
We define $\mathfrak m$-adic \'etale cohomology as
\[
R \Gamma(X,\sF \otimes_{\sO_E} \sL_{R} )  := R \lim_n  R \Gamma(X,\sF \otimes_{\sO_E}
\sL_{R/\mathfrak m^n} ),
\]
where  $\sL_{R/\mathfrak m^n}$ is the \'etale local system on $X$ associated to the finite character
$\pi_1(X)\xrightarrow{\rho}  R^\times  \to (R/\mathfrak m^n)^\times$.
We denote the cohomology of this complex by $H^i(X, \sF \otimes_{\sO_E} \sL_{R})$.
The corresponding cohomology with compact support is defined in the usual way.
Let $\omega_X \in D^b_c(X,\sO_E )$ be the dualizing complex $f^!(\sO_E )$, where $f\colon X\to \Spec(F)$ is the canonical map.

We collect some properties of this $\mathfrak m$-adic cohomology, which follow from \cite{Eke90}.

\begin{prop}\label{prop:propertfm}  \mbox{}
  \begin{itemize}
  \item[(1)]{\rm [Finiteness]} The complex
    $ R \Gamma(X,\sF \otimes_{\sO_E} \sL_{R} ) $ has bounded, coherent cohomology groups,
    i.e.\ it is in $ D^b_\coh (R)$.
  \item[(2)]{\rm [Base change]} For any quotient ring $R'$ of $R$   we have  a base change isomorphism
    \[
  R \Gamma(X,\sF \otimes_{\sO_E} \sL_{R} )  \otimes_{R} R'  \xrightarrow{\simeq}   R
  \Gamma(X,\sF \otimes_{\sO_E} \sL_{R'} )\in D^b_{\rm coh}(R' ).
\]
\item[(3)]{\rm [Limit property]} We have an isomorphism of $R$-modules
  \[
H^i(X, \sF \otimes_{\sO_E} \sL_{R}  ) \xrightarrow{\simeq} \lim_n H^i(X, \sF \otimes_{\sO_E} \sL_{ R/ \mathfrak m^n}).
  \]
\item[(4)]{\rm [Duality]} There is a canonical isomorphism
  \[
    R \Hom_R ( R \Gamma_c(X,\sF \otimes_{\sO_E} \sL_{R} ),R ) \simeq  R
    \Gamma(X,\sF^\vee \otimes_{\sO_E} \sL^\vee_{R} ) 
  \]
  in $D^b_{\rm coh}(R)$,
  where $\sF^\vee = R\Hom(\sF,
    \omega_X)$
  and where $\sL^\vee_R$ is the local system associated to the dual character $\rho^{-1}$.
\end{itemize}
\end{prop}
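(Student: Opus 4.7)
The proposal is to deduce all four parts from Ekedahl's formalism \cite{Eke90} of normalized constructible $R$-adic complexes, applied to the projective system
\[
\bigl\{\sF \otimes_{\sO_E} \sL_{R/\mathfrak m^n}\bigr\}_{n\ge 1}
\]
on $X$. This system defines an object of $D^b_c(X,R)$ in Ekedahl's sense because at each finite level $R/\mathfrak m^n$ the tensor product is a bounded constructible complex of $(R/\mathfrak m^n)$-modules, and the transition maps are compatible with reduction mod powers of $\mathfrak m$. The main work is therefore bookkeeping: translating Ekedahl's results, originally stated for $\Z_\ell$-adic (or $\sO_E$-adic) complexes, to the complete noetherian local $\sO_E$-algebra~$R$, which is already implicit in Ekedahl and explicit in the Jacobson-ring formalism used in \cite{GL96}.

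For (1) I would apply Ekedahl's finiteness theorem for $R\Gamma(X,-)$ acting on $D^b_c(X,R)$: since $X$ is separated of finite type over an algebraically closed field, each $R\Gamma(X,\sF\otimes\sL_{R/\mathfrak m^n})$ has cohomology that is a finite $R/\mathfrak m^n$-module, and the resulting pro-system is then strict enough for the derived limit to land in $D^b_{\rm coh}(R)$. Part (3) then follows because finiteness at each level guarantees Mittag-Leffler for the system $\{H^i(X,\sF\otimes\sL_{R/\mathfrak m^n})\}_n$, so $R^1\lim$ vanishes and $H^i$ commutes with $R\lim$. These two parts are essentially direct quotations of Ekedahl's results and do not require new input beyond recording that his hypotheses are satisfied for $R$.

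For (2), the strategy is to first establish the base change against $R/\mathfrak m^n$ and then bootstrap. For a fixed $n$ the isomorphism
\[
R\Gamma(X,\sF\otimes_{\sO_E}\sL_R)\otimes_R^L R/\mathfrak m^n \xrightarrow{\simeq} R\Gamma(X,\sF\otimes_{\sO_E}\sL_{R/\mathfrak m^n})
\]
follows from part~(1): the left-hand side is a derived base change of a coherent $R$-complex, which, together with completeness of $R$, commutes with $R\lim_n$ in the standard way. For a general quotient $R'$ of $R$ one writes $R'=\lim_n R'/(\mathfrak m^n R')$ and applies the previous case together with (1) and (3) for $R'$. The only non-routine point is to check that the two operations "tensor with $R'$" and "limit over $n$" are genuinely exchangeable, which is where the coherence from (1) does the real work by eliminating higher $\lim$-terms.

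Part (4) will be the main technical step. At each finite level $R/\mathfrak m^n$, Verdier duality on $X$ gives a canonical isomorphism
\[
R\Hom_{R/\mathfrak m^n}\!\bigl(R\Gamma_c(X,\sF\otimes\sL_{R/\mathfrak m^n}),R/\mathfrak m^n\bigr) \;\simeq\; R\Gamma(X,\sF^\vee\otimes\sL^\vee_{R/\mathfrak m^n}),
\]
using $\sF^\vee=R\Hom(\sF,\omega_X)$ and the fact that for a character $\rho$ the dual local system is the one associated to $\rho^{-1}$. The problem is to pass to the limit in $n$: one has to commute $R\Hom_R(-,R)$ with $R\lim$, and to compare $R\Hom_{R/\mathfrak m^n}(-,R/\mathfrak m^n)$ with the reduction of $R\Hom_R(-,R)$. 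Both steps work because $R\Gamma_c(X,\sF\otimes\sL_R)$ is a coherent $R$-complex by (1) and (2), so duality against $R$ is computed by a perfect-complex-level argument and commutes with the base-change $R\to R/\mathfrak m^n$. The delicate point, and the step I expect to require the most care, is verifying that the Ekedahl-level derived limit on the left of the displayed duality isomorphism matches $R\Hom_R$ of the derived limit; this is the place where one must genuinely use that $R$ is a complete noetherian local ring with finite residue field, together with the coherence furnished by part~(1).
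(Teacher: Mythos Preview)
Your proposal is correct and takes essentially the same approach as the paper: both derive all four parts from Ekedahl's adic formalism \cite{Eke90}, with the paper's proof being extremely terse (parts (1), (2), (4) are direct citations of \cite[Thm.~6.3, Thm.~7.2]{Eke90}, and part (3) is one line). Two small refinements worth noting: for (3), the paper's justification is that the modules $H^i(X,\sF\otimes\sL_{R/\mathfrak m^n})$ are \emph{artinian} (finitely generated over the artinian ring $R/\mathfrak m^n$), which immediately kills $\lim_n^1$---this is the precise content of your ``finiteness guarantees Mittag-Leffler''; and the paper explicitly flags that Ekedahl assumes $R$ has finite global dimension, remarking that the general case follows similarly, a hypothesis gap you should also acknowledge when invoking \cite{Eke90}.
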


Part (1), (2) and (4) follow from~\cite[Thm.~6.3, Thm.~7.2]{Eke90},  part (3) follows from the fact that $\lim_n^1
H^i(X,R/ \mathfrak m^n)$ vanishes for all $i\in \Z$ as these $R$-modules  are  artinian.
Note that Ekedahl assumes that $R$ has finite global dimension, which is sufficient for
our application. It is however not difficult to show the general case.

Note that for $\sF\in  D^b_c(X,\QQl)$ we get a corresponding complex
$$ R \Gamma(X,\sF \otimes
\sL_{\mathfrak R} )  \
{\rm in} \   D^b_{\rm coh}(\mathfrak R ),$$
associated to the character  $\pi_1(X)\xrightarrow{\rho}  R^\times  \to \frak{R}^\times$.
\medskip

  One way in which this \'etale $\mathfrak m$-adic cohomology is useful is the following
  isomorphism criterion for a cup-product.
 Let $\sF $ and $\sK$ be in $D^b_c(X,\QQl)$ and assume that $R$ is an integral  domain  in
 which $\ell$ does not vanish.

  \begin{lem}\label{lem.geniso}
    For  $\xi\in H^j(X,\sK)$ and $i\in \Z$ the following are equivalent:
    \begin{itemize}
    \item[(1)] The cup-product
      \[
     H^i(X, \sF \otimes \sL_{\mathfrak R}  )    \xrightarrow{\cup\xi}  H^{i+j}(X, \sF \otimes
     \sK \otimes \sL_{\mathfrak R}  )
      \]
  is an
  isomorphism (resp.\ not an isomorphism) after tensoring with $\Frac (\mathfrak R )$.
\item[(2)] $H^i(X,\sF \otimes \sL_s )  \xrightarrow{\cup\xi} H^{i+j}(X,\sF\otimes \sK\otimes
    \sL_s)$ is an isomorphism (resp.\ not an isomorphism) for all $s\in U$, where
    $U\subset  \Spm (\mathfrak R )$ is a dense
    open subset.
  \end{itemize}
\end{lem}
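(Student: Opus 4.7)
The plan is to reduce the statement to a generic-freeness assertion for the cohomology complexes, and then to specialize to closed points via flat base change.

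Set $C = R\Gamma(X,\sF\otimes\sL_{\mathfrak R})$ and $C' = R\Gamma(X,\sF\otimes\sK\otimes\sL_{\mathfrak R})$; both lie in $D^b_{\coh}(\mathfrak R)$ by Proposition~\ref{prop:propertfm}(1). The cup product with $\xi$ defines a morphism $\phi\colon C\to C'[j]$ in $D(\mathfrak R)$; write $N=\ker H^i(\phi)$ and $M=\coker H^i(\phi)$, both finitely generated $\mathfrak R$-modules. Since $\mathfrak R$ is integral and noetherian and $\Frac(\mathfrak R)$ is flat over it, condition~(1) in the ``iso'' (resp.\ ``not iso'') form is equivalent to $N$ and $M$ both being torsion, resp.\ to at least one of them having full support on $\Spec(\mathfrak R)$.

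Next I would apply generic freeness to the finitely many nonzero cohomology modules $H^k(C)$ and $H^k(C')$ to produce a dense open $U_0\subset\Spec(\mathfrak R)$ over which they are all locally free of constant rank. For a closed point $s\in U_0\cap\Spm(\mathfrak R)$, the hyper-Tor spectral sequence
\[
\mathrm{Tor}_p^{\mathfrak R}\bigl(H^q(C),\kappa(s)\bigr) \;\Rightarrow\; H^{q-p}\bigl(C\otimes^{L}_{\mathfrak R}\kappa(s)\bigr)
\]
degenerates to its $p=0$ column because the higher Tor terms vanish on $U_0$. Combined with the base-change isomorphism of Proposition~\ref{prop:propertfm}(2), this yields $H^i(X,\sF\otimes\sL_s) \cong H^i(C)\otimes_{\mathfrak R}\kappa(s)$, and analogously for $C'$, with the fiberwise cup product induced by $H^i(\phi)$. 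Since $\mathfrak R$ is Jacobson, $U_0\cap\Spm(\mathfrak R)$ is Zariski dense in $\Spm(\mathfrak R)$.

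Finally I would analyze $H^i(\phi)|_{U_0}$ as a map of locally free modules. If the generic ranks of source and target disagree, the map is neither a generic iso nor a fiberwise iso at any $s\in U_0\cap\Spm(\mathfrak R)$, so the ``not iso'' forms of (1) and (2) both hold. If the ranks agree, then after a further shrinking of $U_0$ the map is represented by a square matrix with determinant $\Delta\in\mathfrak R|_{U_0}$; at a point $s$ the map is a fiberwise iso iff $\Delta(s)\ne 0$, so the two cases $\Delta\equiv 0$ and $\Delta\not\equiv 0$ give precisely the ``not iso'' and ``iso'' forms of (1) matched with those of (2). The main technical point is the identification $H^i(X,\sF\otimes\sL_s)\cong H^i(C)\otimes_{\mathfrak R}\kappa(s)$ at the chosen closed points, which forces us to invoke generic freeness up front so that the higher Tor terms in the spectral sequence vanish.
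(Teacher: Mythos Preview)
Your proposal is correct and follows essentially the same route as the paper: generic freeness for the cohomology modules of the two complexes (the paper also adds the cokernel of $\cup\xi$ to this list, which you handle equivalently via the determinant argument), combined with Proposition~\ref{prop:propertfm}(2) and the Tor-spectral sequence to identify the fibers. Your explicit mention of the Jacobson property and the determinant dichotomy is a welcome unpacking of what the paper leaves implicit.
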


Here $\sL_s$ for $s\in \Spm(\mathfrak R )$ is the local system on $X$ corresponding to the
character $\pi_1(X)\to \mathfrak R^\times \to k(s)^\times$, which is given by reduction modulo the
maximal ideal associated to $s$. Note that the residue field $k(s)$ is equal to $\QQl$,
see Section~\ref{subsec:formbas}.

  \begin{proof}
It suffices to prove (1) $\Rightarrow $ (2).
 By Proposition~\ref{prop:propertfm}(1) there exists
a dense open subset $U\subset \Spm(\sR)$ such that the following $\mathfrak R$-modules are
flat over $U$:  
\ml{}{
   H^*(X, \sF \otimes \sL_{\mathfrak R}  )
   ,\   H^*(X, \sF \otimes \sK \otimes  \sL_{\mathfrak R}  )\text{ and}  \notag \\ 
   \coker ( H^*(X, \sF \otimes \sL_{\mathfrak R}  ) \xrightarrow{\cup\xi}   H^*(X, \sF \otimes \sK \otimes
  \sL_{\mathfrak R} ) ) .
  \notag}
Note that then also the kernel of $\cup\xi$ is flat over $U$.
 Then the
conclusion follows from Proposition~\ref{prop:propertfm}(2) and the Tor-spectral sequence.
  \end{proof}

Combining  Lemma~\ref{lem.geniso}  with \cite[Prop.~0.9.2.3]{EGAIII}   we obtain:

  \begin{cor} \label{cor:cons}
The set of $s\in \Spm (\mathfrak R )$ with the property that \[H^i(X,\sF \otimes \sL_s )  \xrightarrow{\cup\xi} H^{i+j}(X,\sF\otimes \sK\otimes
    \sL_s)\] is an isomorphism (resp. not an isomorphism)  is constructible.
    \end{cor}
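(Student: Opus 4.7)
The plan is to invoke the constructibility criterion of [EGA III, Prop.~0.9.2.3]: a subset $T$ of a noetherian topological space $Y$ is constructible if and only if, for every irreducible closed $Z \subseteq Y$, either $T \cap Z$ contains a dense open of $Z$, or $T \cap Z$ is nowhere dense in $Z$. Since $\mathfrak R$ is noetherian, so is $\Spm(\mathfrak R)$, and it suffices to verify this dichotomy for $T$ the iso locus and every irreducible closed $Z \subseteq \Spm(\mathfrak R)$.

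Fix such a $Z$, corresponding to a prime $\mathfrak p \subseteq \mathfrak R$, so that $\mathfrak R/\mathfrak p$ is a noetherian Jacobson integral ring with $\Spm(\mathfrak R/\mathfrak p) = Z$. I would apply Lemma~\ref{lem.geniso} to the ring $\mathfrak R/\mathfrak p$ together with the character $\pi_1(X) \to \mathfrak R^\times \to (\mathfrak R/\mathfrak p)^\times$ obtained by composition with the quotient map. The cup-product either becomes an isomorphism after tensoring with $\Frac(\mathfrak R/\mathfrak p)$ or it does not, and the lemma translates each alternative into the corresponding one at the fibers of a dense open of $Z$: in the first case the iso locus contains a dense open of $Z$, in the second it is contained in the complement of a dense open and is therefore nowhere dense in $Z$. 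This is exactly the EGA dichotomy for $T \cap Z$, and completes the proof.

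The main obstacle is that Lemma~\ref{lem.geniso} and the underlying Proposition~\ref{prop:propertfm} are formulated for rings of the form $R \otimes_{\sO_E} \QQl$ with $R$ complete noetherian local, whereas the quotient $\mathfrak R/\mathfrak p$ need not be of this form. What is needed is that $R\Gamma(X, \sF \otimes \sL_{\mathfrak R}) \otimes^L_{\mathfrak R} \mathfrak R/\mathfrak p$ still lies in $D^b_\coh(\mathfrak R/\mathfrak p)$ and computes $R\Gamma(X, \sF \otimes \sL_{\mathfrak R/\mathfrak p})$ for the local system attached to the composed character; this follows from the derived base change of Proposition~\ref{prop:propertfm}(2) combined with generic flatness on the integral noetherian ring $\mathfrak R/\mathfrak p$ and the Tor spectral sequence, after which the proof of Lemma~\ref{lem.geniso} goes through verbatim for $\mathfrak R/\mathfrak p$. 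One can alternatively bypass this by noetherian induction on $\Spm(\mathfrak R)$, peeling off the dense open supplied by Lemma~\ref{lem.geniso} as stated and recursing on the closed complement component by component, but the inductive step still requires the same extension of the finiteness and base change statements.
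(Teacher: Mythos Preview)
Your proposal is correct and is essentially the paper's argument, spelled out in detail: the paper's proof is the single sentence ``combine Lemma~\ref{lem.geniso} with \cite[Prop.~0.9.2.3]{EGAIII}'', and the EGA criterion you invoke is precisely that proposition, applied via the dichotomy supplied by Lemma~\ref{lem.geniso} on each irreducible closed $Z=V(\mathfrak p)$.

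Your discussion of the ``main obstacle'' is accurate but perhaps slightly overcautious. One can sidestep it rather than redo the proof of Lemma~\ref{lem.geniso}: since $\mathfrak R$ is noetherian, a prime $\mathfrak p\subset\mathfrak R$ is finitely generated, hence after enlarging $E$ to a finite extension $E'$ it is the extension of a prime $\mathfrak p_0$ of the complete (semi-)local ring $R_{E'}=R\otimes_{\sO_E}\sO_{E'}$ not containing $\ell$; passing to the relevant local factor puts $\mathfrak R/\mathfrak p=(R_{E'}/\mathfrak p_0)\otimes_{\sO_{E'}}\QQl$ back into the framework of Proposition~\ref{prop:propertfm} and Lemma~\ref{lem.geniso}. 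Your alternative---observing that the only inputs to the proof of Lemma~\ref{lem.geniso} are coherence of the cohomology over $\mathfrak R$, derived base change to closed points, and generic flatness over the integral quotient $\mathfrak R/\mathfrak p$, all of which persist---is equally valid and arguably cleaner.
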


\smallskip

Now we consider a special ring $R$. Let $\pi$ be a torsion free $\ell$-adic quotient of
$\pi_1^{\rm ab}(X )$ and let $R$ be the completed group ring
\[
R = \sO_E \llbracket \pi \rrbracket = \lim_n \sO_E [\pi/ \ell^n \pi ].
\]
as in Section~\ref{sec:formgr}.
We let $\rho\colon \pi_1(X) \to R^\times$ be the canonical character $e\mapsto [e ]$.
Set $\frak{R}=R \otimes_{\sO_E} \QQl$.

\begin{defn}\label{def.forme}
  The integral Fourier-Mellin transform of $\sF\in D^b_c(X, \sO_E )$ is defined as
  \[
\FM_\pi(X,\sF) =   R \Gamma(X,\sF \otimes_{\sO_E} \sL_{R} )\in D^b_\coh(R).
\]
Up to isogeny we get an induced Fourier-Mellin transform $$\sFM_\pi(X,\sF) \in
D^b_\coh(\mathfrak R)$$ for $\sF\in D^b_c(X,\QQl )$.
The corresponding cohomology modules are denoted by  $\FM^i_\pi(X,\sF)$ resp.\  $\sFM^i_\pi(X,\sF)$.
\end{defn}

In case the group $\pi$ is clear from the context we omit it in the notation.

\begin{rmk}\label{rmk:fmdual}
 For $ R=\sO_E \llbracket \pi \rrbracket  $ and  for an $R$-module (sheaf) $\sM$ we denote by $\sM'$ the same
abelian group (sheaf) with the  $R$-module structure twisted by the automorphism $[-1]\colon R \xrightarrow{\sim}
R$, $[-1]([e]) = [-e]$   for $e\in \pi$.
Then the dual $R$-module sheaf $\sL_R^\vee$ is isomorphic
to $\sL'_R$.
\end{rmk}

\subsection{A vanishing result}

Again we fix a  finitely generated free $\Z_\ell$-module quotient $\pi$ of $\pi_1^{\rm ab}(X)$  and we set
$R = \sO_E\llbracket \pi \rrbracket$.
We have an induced tower of Galois coverings of $X$
\[
\cdots \to X_{n+1} \to X_n \to \cdots \to X_0=X
\]
with $\Gal(X_n/X)=\pi/\ell^n \pi$. We denote this tower by $X_\infty$ and we use the
notation
\[
  H^j(X_\infty , \sK )=\colim_n H^j(X_n , \sK )\]
for $\sK\in D^b_c(X,\sO_E)$.

The following vanishing proposition is our key new technical result which allows us to
obtain a short proof of the generic vanishing theorem. The analog for complex analytic varieties could
be used to give an alternative direct proof of ~\cite[Thm.~1.3]{BSS18},
\cite[Cor.~7.5]{Sch15},~\cite[Thm.~1.1]{KW15}.

\begin{prop}\label{prop.keyvanish}
Let   $\sF$  and $\sK$ be in $ D^b_c(X,\sO_E)$.
If $\xi\in H^j(X,\sK)$ becomes divisible in $H^j(X_\infty,\sK)$,  then the cup-product map
\[
\FM^i(X,\sF) \xrightarrow{\cup\xi} \FM^{i+j}(X,\sF\otimes_{\sO_E} \sK)
\]
vanishes for all $i\in \Z$.
\end{prop}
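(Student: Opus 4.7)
The plan is to reduce the vanishing statement to vanishing modulo each power $\mathfrak{m}^N$ of the maximal ideal of $R$ and then invoke the limit property of Proposition~\ref{prop:propertfm}(3). The divisibility of $\xi$ at the infinite level of the tower will force $\cup\xi$ to factor through multiplication by $\lambda^N$ after base change to a sufficiently high finite level, at which point reduction modulo $\mathfrak{m}^N$ kills it.

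First I would set up the base change identifications. For each $n$, set $G_n = \pi/\ell^n\pi$ and $J_n = \ker(R \twoheadrightarrow \sO_E[G_n])$. Since $R$ is the inverse limit of the finite $\sO_E$-algebras $(\sO_E/\lambda^m)[G_n]$, for every $N$ there exists $n = n(N)$ with $J_n \subseteq \mathfrak{m}^N$, so $R/\mathfrak{m}^N$ is a further quotient of $\sO_E[G_n]$. The projection formula for the Galois étale cover $p_n\colon X_n \to X$ identifies $p_{n,*}\underline{\sO_E}_{X_n}$ with $\sL_{\sO_E[G_n]}$, whence Proposition~\ref{prop:propertfm}(2) produces a base change isomorphism
\[
  \FM(X,\sF) \otimes^L_R \sO_E[G_n] \;\simeq\; R\Gamma(X_n, p_n^*\sF),
\]
and analogously for $\sF\otimes_{\sO_E}\sK$. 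Under this identification $\cup\xi$ corresponds to $\cup p_n^*\xi$ by naturality of cup product with respect to pullback.

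Now fix $N \geq 1$ and choose $n \geq n(N)$ large enough that the divisibility hypothesis provides an element $\eta \in H^j(X_n, p_n^*\sK)$ with $p_n^*\xi = \lambda^N \eta$. Then $\cup p_n^*\xi$ factors as $\lambda^N \cdot (\cup\eta)$ as a morphism in $D(\sO_E[G_n])$. Applying Proposition~\ref{prop:propertfm}(2) once more to the quotient $\sO_E[G_n] \twoheadrightarrow R/\mathfrak{m}^N$ and using that $\lambda^N$ acts as zero on every $R/\mathfrak{m}^N$-module, the base-changed cup product
\[
  \cup\xi\colon \FM(X,\sF) \otimes^L_R R/\mathfrak{m}^N \;\to\; \FM(X, \sF\otimes_{\sO_E}\sK)[j] \otimes^L_R R/\mathfrak{m}^N
\]
vanishes in $D(R/\mathfrak{m}^N)$; in particular it induces the zero map on each $H^i$. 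Proposition~\ref{prop:propertfm}(3) identifies $\FM^i(X,\sF)$ with $\lim_N H^i(X,\sF\otimes\sL_{R/\mathfrak{m}^N})$ and similarly for $\sF\otimes\sK$, so $\cup\xi$ on $\FM^i(X,\sF)$ is the inverse limit of the zero maps just constructed, and therefore vanishes. The main technical point is bookkeeping the compatibility of $\cup\xi$ along the chain of base changes $R \twoheadrightarrow \sO_E[G_n] \twoheadrightarrow R/\mathfrak{m}^N$; once this is in place, the divisibility at the infinite level is converted directly into the scalar factor $\lambda^N$ that is annihilated by the final reduction.
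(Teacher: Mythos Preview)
Your proof is correct and follows essentially the same strategy as the paper's: both identify $\FM^i$ as an inverse limit over the tower, use the projection formula to pass to cohomology of $X_n$, and exploit divisibility of $p_n^*\xi$ to kill the cup product at each finite stage. The only cosmetic difference is that the paper works directly with the doubly-indexed pro-system $\{(\sO_E/\ell^m)[\pi/\ell^n\pi]\}_{m,n}$ and establishes its isomorphism with $\{R/\mathfrak m^n\}_n$ by an explicit binomial computation, whereas you first base change to the integral group ring $\sO_E[G_n]$ and then reduce modulo $\mathfrak m^N$; your cofinality claim $J_n\subseteq\mathfrak m^N$ is exactly one direction of the paper's pro-ring isomorphism.
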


\begin{proof}
 Recall that   $R=\sO_E \llbracket \pi \rrbracket  $ and that this ring is identified with  $\sO_E \llbracket X_1,
  \ldots , X_b \rrbracket $ by sending $[e_i]$ to $1+X_i$, see Section~\ref{subsec:formbas}.
  The key observation is that we have an isomorphism of pro-rings
  \begin{equation}\label{eq.prosy}
\{ R / \mathfrak m^n \}_n \simeq   \{\sO_E/\ell^m \sO_E [\pi / \ell^n \pi]  \}_{m,n}.
\end{equation}
One easily sees the two isomorphisms of pro-rings
\begin{equation}\label{eq.proiso1}
\{ R / \mathfrak m^n \}_n \simeq \{ R /(\ell^m R +  X_1^n R +
\cdots + X_b^n R)  \}_{m,n}
\end{equation}
and
\begin{equation}\label{eq.proiso2}
\{ \sO_E/\ell^m [\pi / \ell^n \pi] \}_{m,n}  = \{  R/(\ell^m R +   ((X_1+1)^{\ell^n}-1)
R +  \cdots)  \}_{m,n}  .
\end{equation}
So to prove the isomorphism of pro-rings ~\eqref{eq.prosy} we have to show that for fixed $m>0$ the right sides of~\eqref{eq.proiso1} and
of~\eqref{eq.proiso2} are isomorphic as pro-systems in $n$.
As the ring on the right side of~\eqref{eq.proiso2} is artinian,  $X_i$ is nilpotent in
it, which shows one
direction.

Conversely, it suffices  to show that $ (X_i+1)^{\ell^r}-1$
vanishes in the ring on the right side of~\eqref{eq.proiso1}  for $r\gg 0$
depending on $m$ and $n$.
We verify this by observing that  in the ring $ R/\ell^m $ for any $n\ge m>0 $  we
have $X_i^{\ell^{n-m+1}} | (X_i+1)^{\ell^n}-1$. In order to verify this divisibility one has to show that
all coefficients of the integral polynomial $(X_i+1)^{\ell^n}-1 = \sum_{r=1}^{\ell^n}
\binom{\ell^n}{r} X_i^r $ are divisible by
$\ell^m$ in degrees $< \ell^{n-m+1}$.  This follows from  ${\rm ord}_\ell\left( \binom{\ell^n}{r} \right) =
n-{\rm ord}_\ell(r) $ for $r>0$. 

From Proposition~\ref{prop:propertfm}(3) and from the isomorphism~\eqref{eq.prosy} we deduce that 
\ga{}{ \FM^i(X,\sF)= {\rm lim}_{m,n}   H^i(X, \sF\otimes_{\sO_E} \sL_{\sO_E/\ell^m  [\pi / \ell^n \pi] } ). \notag}
We have
\begin{equation}\label{eq.cohocov}
  H^i(X, \sF\otimes_{\sO_E} \sL_{\sO_E/\ell^m  [\pi / \ell^n \pi] } ) \cong
  H^i(X_n, p_n^*(\sF)\otimes_{\sO_E}\sO_E/\ell^m )
\end{equation}
and similarly for $\FM^{i+j}(X,\sF\otimes_{\sO_E} \sK)$, where $p_{n}\colon X_n
 \to X$ is the canonical finite \'etale map .
For the
isomorphism~\eqref{eq.cohocov}  one uses that
 $p_{n\, *} \,\sO_E /\ell^m \cong  \sL_{\sO_E/\ell^m  [\pi / \ell^n \pi] }$ and the projection formula.
  Here  $ \sO_E /\ell^m$ is the constant sheaf on $X_n$.

Our assumption on $\xi$ says that if  $n$ is large enough, depending on $m$, then $\xi$ is
$\ell^m$-divisible in $H^j(X_n,\sK)$. In this situation  the
cup-product
\begin{equation}\label{eq.provan}
 H^i(X_n, p_n^*(\sF)\otimes_{\sO_E}\sO_E/\ell^m ) \xrightarrow{\cup\xi}  H^{i+j}(X_n, p_n^*(\sF)\otimes_{\sO_E}\sK\otimes_{\sO_E}\sO_E/\ell^m )
\end{equation}
vanishes. So taking the limit over $m$ and $n$ in~\eqref{eq.provan} and using the
projection formula we finish the proof of Proposition~\ref{prop.keyvanish}.
  \end{proof}

\section{Hard Lefschetz theorem} \label{sec:HL}

\subsection{Formulation of the theorems}

Let $X$ be a separated scheme of finite type  over an algebraically closed field $F$ of
characteristic different from $\ell$.

\begin{defn} \label{defn:arithmperv} A complex
$\sF\in D^b_c(X , \QQl )$ is called arithmetic, if there exists a  finitely
generated field 
$F_0\subset F$ such that
\begin{itemize}
 \item[(1)]
   $X$ descends to a separated scheme of finite type $X_0/F_0$;
\item[(2)]
   $\sF$
lies in the full subcategory
\[
D^b_c(X_0\otimes_{F_0}\overline F_0 , \QQl ) \hookrightarrow D^b_c(X , \QQl ),
\]
where $ F_0\subset  \overline{F_0}$ is the algebraic closure of $F_0$ in $F$;
\item[(3)]
  For each element  $\sigma\in
 {\rm Gal}(\overline F_0 /F_0)$, one has
\[ \sigma(\sF)\simeq \sF \in D^b_c(X_0\otimes_{F_0}\overline F_0 , \QQl ) .\]
\end{itemize}
\end{defn}
 In particular  if
$F=\overline \F_p$ then $\sF$ is arithmetic if and only if it is stabilized by a non-trivial
power of the Frobenius.
\begin{rmk} \label{rmk:arithm}
The notion of arithmeticity in Definition~\ref{defn:arithmperv} is the same as the one in   in~\cite[Defn.~1.4]{EK20},
replacing $\C$ by $F$.
  In
 \cite[p.~163]{BBD82}  {\it semi-simple}  $\sF\in D^b_c(X, \overline \Q_\ell)$  {\it of
   geometric origin} are defined (over $F=\C$, but this is irrelevant for the discussion) and
one easily checks that these $\sF$ are arithmetic.

In fact wherever we impose the arithmeticity condition in this note a slightly weaker
condition would be sufficient, which is however quite technical to formulate precisely.
 Recall  that the condition  labeled (P) in  \cite[Lem.~6.2.6]{BBD82} says that after a
  spreading of $X,$ a suitable  specialization of $\sF$
 to $\sF_{\bar s}$ is fixed by a Frobenius, where $\bar s$ is a sufficiently generic
 $\overline \F_p$-point.
 The specialization depends on the choice of a strictly henselian discrete valuation ring $V$
 with $ V \subset F$ such that the closed point of $\Spec(V)$ is $\bar s$.
 In our use of the arithmeticity condition, all we shall need is precisely this {\it
   invariance of $\sF$ under one generic Frobenius action}.
\end{rmk}

\smallskip

The Lefschetz isomorphism  is shown for mixed semi-simple perverse sheaves defined over a finite field in \cite[Thm.~5.4.10]{BBD82}  and
for semi-simple perverse sheaves of geometric origin  over the
complex numbers  in \cite[Thm.~6.2.10]{BBD82}.
 Combining this with the Langlands correspondence for function fields~\cite{Laf02} one can
show Theorem~\ref{thm:hl2} by first specializing to an algebraic closure of a finite field $F$ similarly
to~\cite[Rmk.~1.7]{Dri01} and then using the method of the proof of~\cite[1.8]{Dri01}.  In the
next two theorems, $X$ is a smooth projective variety over an algebraic closed field $F$ of
characteristic different from $\ell$, and $\eta\in H^2(X, \Z_\ell)$ is the first Chern class of an ample line bundle.

\begin{thm}[Hard Lefschetz] \label{thm:hl2}
 Let $\sF\in D^b_c(X, \overline \Q_\ell)$ be an arithmetic  semi-simple perverse sheaf.  Then for any $i\in \N,$ the cup-product map
 \[
\cup \eta^{ i}\colon H^{-i}(X, \sF ) \xrightarrow{\sim}   H^{i}(X, \sF)
\]
is an isomorphism.
\end{thm}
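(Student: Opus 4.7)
The plan is to reduce, via spreading out and specialization, to the case of a Frobenius-invariant semi-simple perverse sheaf over a finite field, and then to combine Lafforgue's theorem with BBD's Hard Lefschetz for pure perverse sheaves.

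First, by the arithmeticity assumption and Remark~\ref{rmk:arithm}, I would choose a finitely generated subring $R\subset F$, a projective scheme $\sX \to \Spec(R)$ and a complex $\sF_R \in D^b_c(\sX, \QQl)$ restricting to $(X,\sF)$ over the geometric generic point, together with a spreading of the polarization $\eta$. After shrinking $\Spec(R)$ I may assume $\sX\to \Spec(R)$ is smooth and projective and that the higher direct images of $\sF_R$ are $\QQl$-adic local systems on $\Spec(R)$; by proper and smooth base change the Lefschetz operator $\cup\eta^i$ is then compatible with specialization along any closed geometric point. It thus suffices to verify the isomorphism at a chosen closed geometric point $\bar s$, which lies over a finite field $\F_q$.

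Next, the condition (P) of~\cite[Lem.~6.2.6]{BBD82} recalled in Remark~\ref{rmk:arithm} shows that for a sufficiently generic $\bar s$ the specialization $\sF_{\bar s}$ is a Frobenius-invariant semi-simple perverse sheaf on $\sX_{\bar s}$. Decomposing it into simple constituents and invoking Lafforgue's theorem on the Langlands correspondence for ${\rm GL}_r$ over function fields~\cite[Thm.~7.6]{Laf02}, each simple constituent becomes pure of some weight after twisting by a continuous character of $\Gal(\overline{\F}_q/\F_q)$ of finite order. Such a twist commutes with the cup product by $\eta$, so it does not affect whether $\cup\eta^i$ is an isomorphism. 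This reduces the theorem to the case of a pure perverse sheaf on a projective $\F_q$-scheme, which is settled by~\cite[Thm.~5.4.10]{BBD82}.

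The main obstacle is the purity step: passing from ``Frobenius-invariant simple perverse sheaf over $\F_q$'' to ``pure up to a finite-order twist'' requires Lafforgue's theorem, which I would use as a black box. The remaining ingredients --- smooth/proper base change to propagate $\cup\eta^i$ through the spreading family, and assembling the pure semi-simple summands into the final isomorphism --- are then relatively formal, as already emphasized in~\cite[1.8]{Dri01}.
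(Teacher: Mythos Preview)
The paper does not supply its own proof of this statement; it records the theorem as a known consequence of \cite[Thm.~5.4.10]{BBD82} together with Lafforgue's theorem~\cite{Laf02}, referring to~\cite[1.8]{Dri01} for the observation. Your sketch is precisely an unpacking of that cited argument, so it matches what the paper intends.

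Two points of precision are worth flagging. First, the twist produced by Lafforgue is by an arbitrary Weil character $\alpha^{\deg}$ of $\Gal(\overline{\F}_q/\F_q)$, not in general one of finite order; this is harmless, since any such geometrically constant twist leaves the Lefschetz map unaffected, as you correctly note. Second, and more substantively, condition~(P) of~\cite[Lem.~6.2.6]{BBD82} only guarantees that $\sF_{\bar s}$ is Frobenius-invariant, not that it is semi-simple, and specialization does not preserve semi-simplicity in general. The standard remedy is to reduce first to $\sF$ simple over $F$, write $\sF=j_{!*}\sL[\dim U]$ with $\sL$ an irreducible local system on a smooth locally closed $U\subset X$, and arrange the spreading so that $\sL_{\bar s}$ remains irreducible at a generic closed point. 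Then Lafforgue (extended from curves to higher-dimensional varieties as in~\cite[1.8]{Dri01}) yields $\iota$-purity of $\sL_{\bar s}$ and hence of $\sF_{\bar s}=j_{!*}\sL_{\bar s}[\dim U]$, so that \cite[Thm.~5.4.10]{BBD82} applies without an independent semi-simplicity step.
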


The aim of this section is to prove Theorem~\ref{thm:hl} for which weights are not available.

\begin{thm}[Hard Lefschetz]  \label{thm:hl}
Let $\sL$ be an \'etale rank one $\QQl$-local system on $X$.  Let $\sF\in D^b_c(X, \QQl)$ be an arithmetic  semi-simple perverse sheaf.
Then for any $i\in \N,$ the cup-product map 
 \[
\cup \eta^{ i}\colon H^{-i}(X, \sF\otimes \sL) \xrightarrow{\sim}   H^{i}(X, \sF\otimes \sL)
\]
is an isomorphism.
\end{thm}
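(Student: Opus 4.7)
The plan is the strategy sketched in the Introduction. Since $X$ is smooth and projective, the $\ell$-adic completion of $\pi_1^{\rm ab}(X)$ is a finitely generated $\Z_\ell$-module; denote by $\pi$ its torsion-free quotient and let $\sG$ be the associated multiplicative formal Lie group of Section~\ref{sec:formgr}, so that $\sG(\QQl) = \Hom_{\rm cont}(\pi, \QQl^\times)$. Any rank one $\QQl$-local system $\sL$ on $X$ factors as $\sL = \sL_\tau \otimes \sL_{s_1}$ with $\sL_\tau$ a torsion local system and $s_1 \in \sG(\QQl)$. Since tensoring with $\sL_\tau$ preserves the class of arithmetic semi-simple perverse sheaves, we may replace $\sF$ by $\sF' := \sF \otimes \sL_\tau$ and reduce to showing, for every $s \in \sG(\QQl)$ and every $i \ge 1$, that
\[
\cup \eta^i\colon H^{-i}(X, \sF' \otimes \sL_s[d]) \to H^{i}(X, \sF' \otimes \sL_s[d])
\]
is an isomorphism. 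Let $B \subset \sG(\QQl)$ be the set of $s$ for which this fails for at least one $i$; only finitely many $i$ contribute, so Corollary~\ref{cor:cons} implies that $B$ is constructible. Let $S$ denote its Zariski closure.

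Next, produce a Frobenius stabilizing $S$. By arithmeticity of $\sF'$ (Definition~\ref{defn:arithmperv}), fix a finitely generated $F_0 \subset F$, a model $X_0/F_0$ of $X$ carrying the polarization $\eta$, and $\sF'_0$ on $X_0\otimes_{F_0}\bar F_0$ with $\tau^*\sF'_0 \simeq \sF'_0$ for every $\tau \in \Gal(\bar F_0/F_0)$. Spread out to a smooth projective $\sX \to \Spec A_0$ over an integral $\Z[1/\ell]$-algebra $A_0$ of finite type, pick a closed point $t \in \Spec A_0$ with residue field $\F_q$, and denote by $\sigma$ a corresponding geometric Frobenius. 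By smooth-proper $\ell$-adic base change, $\pi$ is identified with the torsion-free quotient of $\pi_1^{\rm ab}(\sX_t\otimes_{\F_q}\bar\F_q)^{(\ell)}$, i.e.\ with the $\ell$-adic Tate module of the Albanese variety of the geometric fiber. Weil's theorem then yields that $\sigma$ acts semi-simply on $\pi \otimes_{\Z_\ell} \Q_\ell$ with eigenvalues of common complex absolute value $q^{1/2} \ne 1$. The Galois invariance of $\sF'$, combined with that of $\eta$ and the Frobenius-equivariance of the cup product, gives $\sigma(B) = B$ and hence $\sigma(S) = S$.

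Theorem~\ref{main.thm} now applies and produces a quasi-linear decomposition $S = \bigcup_{r\in I} s_r \sH_r(\QQl)$ with the $s_r$ torsion and the $\sH_r$ formal Lie subgroups. By Lemma~\ref{lem:torsiondense} applied to each $\sH_r$, the torsion points of $\sG$ are Zariski dense in every translate $s_r\sH_r(\QQl)$ and thus in each irreducible component of $S$. Because $B$ is constructible with $\overline B = S$, it contains a dense open subset of every component of $S$, hence a torsion point $s_\circ$. The local system $\sL_{s_\circ}$ is then of finite order and in particular arithmetic, so $\sF' \otimes \sL_{s_\circ}$ is an arithmetic semi-simple perverse sheaf, and Theorem~\ref{thm:hl2} implies that every $\cup \eta^i$ is an isomorphism at $s_\circ$, contradicting $s_\circ \in B$. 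Therefore $B = \emptyset$ and Theorem~\ref{thm:hl} follows.

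The main obstacle is the second step: verifying that the Frobenius twist of characters interacts with the Galois-fixed isomorphism class of $\sF'$ and with the Lefschetz operator so that indeed $\sigma(B) = B$, and checking the semi-simple, pure, non-unit eigenvalue hypothesis of Theorem~\ref{main.thm}, which rests on Weil's theorem for the first $\ell$-adic cohomology of a smooth projective variety over a finite field.
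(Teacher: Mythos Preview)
Your argument is essentially the paper's: absorb the torsion part of $\sL$ into $\sF$, define the constructible bad locus in $\sG(\QQl)$, produce a Frobenius stabilizing it via arithmeticity, apply Theorem~\ref{main.thm}, and contradict Theorem~\ref{thm:hl2} at a torsion point. Two points to correct. First, the paper performs the reduction to $F=\overline{\F}_p$ \emph{before} defining the bad locus, invoking \cite[Lem.~6.1.9]{BBD82} and \cite[Rmk.~1.7]{Dri01}; your spreading over $\Z[1/\ell]$ and choice of a closed point $t$ is the same manoeuvre, but you must also specialize $\sF'$ (not only $X$ and $\pi$) and know that its specialization is Frobenius-fixed---this is exactly what those references supply and is the gap you yourself flag in your final paragraph. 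Second, the semi-simplicity of Frobenius on $\pi\otimes_{\Z_\ell}\Q_\ell\cong V_\ell(\mathrm{Alb})$ is Tate's theorem \cite[Thm.~2]{Tat66}, not Weil's; Weil (via \cite{Del80}) gives the common archimedean absolute value $q^{1/2}$ of the eigenvalues.
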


\begin{rmk}
One conjectures the Hard Lefschetz isomorphism to hold for any semi-simple perverse
$\QQl$-sheaf on a projective scheme $X$ over an algebraically closed field $F$ of
characteristic different from $\ell$, see \cite{Dri01}.
\end{rmk}

\subsection{Proof of Hard Lefschetz}

We now prove  Theorem~\ref{thm:hl}.
We follow the general strategy of Drinfeld in \cite[Lem.~2.5]{Dri01}.

We first make the reduction to the case $F=\overline \F_p$.  To this aim we quote
\cite[Lem.~6.1.9]{BBD82} which unfortunately is only written for the passage from
the field of complex numbers to positive characteristic, so we quote in addition
\cite[Rmk.~1.7]{Dri01}  where it is observed that the spreading and specialization also
work in positive characteristic with only minor changes.

From now on, we assume that $F=\overline \F_p$. Then $X$ descends to a variety over a
finite subfield $F_0\subset F$.  With the notation of
Section~\ref{sec:formgr} we let $\pi$  be the $\ell $-adic completion of the abelian \'etale
fundamental group $\pi_1(X)^{\rm
  ab}$  modulo torsion.


We first show that if the Hard Lefschetz theorem is true for all $\sL$ which factor
through $\pi$, then it is true in general. Indeed,   write $\chi\colon \pi(X)^{\rm ab}\to
\sO_E^\times$ for the character corresponding to $\sL$ where $\pi_1(X)^{\rm ab}$ is
the maximal abelian quotient.  We can decomposte $\pi_1(X)^{\rm ab}$ into a product
$\pi\times \pi'$, where $\pi'$ is itself a product of a finite $\ell$-group and a
pro-finite prime to $\ell$ group. As $\chi(\pi')$ is finite this leads to a decomposition
$\sL = \sL' \otimes \sM$ such that $\sL'$ is a torsion rank one $\QQl$-local system and
such that the character of $\sM$ factors through $\pi$.
The sheaf  $\sF\otimes \sL'$  is still
semi-simple perverse and arithmetic,  so by our assumption, Hard Lefschetz holds for
$(\sF\otimes \sL' )\otimes \sM=\sF\otimes \sL$.

We now prove Hard Lefschetz under the assumption that the character of $\sL$ factors through $\pi$. 
We consider the multiplicative  formal Lie group
$\sG={\rm Spf} (\Z_\ell\llbracket \pi \rrbracket)$. So
$\sG(\QQl)$ parametrizes the $\QQl$-local systems whose character
 factors through $\pi$.
We define  $S_\circ\subset \sG(\QQl)$ to correspond to those $\QQl$-local systems  $\sL$ such that Theorem~\ref{thm:hl} fails. By
Corollary~\ref{cor:cons},  $S_\circ$ is constructible.
We define $S\subset \sG(\QQl)$ to be the Zariski closure of $S_\circ$.

The geometric Frobenius $\sigma\in {\rm Gal}(F/F_0)$ induces an automorphism   $\sigma\colon \sG\to \sG$  of the formal Lie group. As
$\sF$ is assumed to be arithmetic, we can replace $F_0$ by a finite extension and assume that  $\sigma$  fixes $\sF$ up to quasi-isomorphism. Thus we obtain a
(non-canonical) isomorphism
\[
\sigma \colon H^*(X,\sF\otimes \sL) \xrightarrow{\sim}  H^*(X,\sF\otimes \sigma(\sL))
\]
compatible with the cup-product with $\eta^i$.
So $\sigma (S_\circ) =S_\circ$ and $\sigma(S)=S$.

The Frobenius $\sigma$ acts $\Z_\ell$-linearly and semi-simply
on $\pi\otimes_{\Z_\ell}\Q_\ell$, use \cite[Thm.~2]{Tat66} and note that the Albanese map $X\to {\rm Alb}(X)$
induces an isomorphism with the Tate module $\pi\cong T_\ell({\rm Alb}(X))$~\cite[Ann.~II]{Ser58}.
Furthermore, the Frobenius $\sigma$ acts with  weight $-1$ on $\pi$, see \cite[Thm.~1]{Del80}.
Therefore Theorem~\ref{main.thm} is applicable and says that
$S$ is quasi-linear.

By Lemma~\ref{lem:torsiondense}, the torsion points are dense in $S$.
If $S_\circ$ were non-empty  it would contain a torsion point corresponding to an arithmetic rank one
$\QQl$-local system $\sL$.  But then $\sF\otimes \sL$ would be perverse, semi-simple,
arithmetic and Hard Lefschetz would fail for it. This contradicts Theorem~\ref{thm:hl2}.  So $S_\circ$ is empty. This finishes the proof.

\section{Jumping loci and generic vanishing}\label{sec:jumvan}

\noindent
In this section we discuss further applications of our main Theorem~\ref{main.thm}.

\subsection{Flat locus of the Fourier-Mellin transform and the cohomological jumping locus}\label{subsec:jump}

Let $X$ be a separated, connected scheme of finite type over the algebraically closed field
$F$ of characteristic different from $\ell$. Let $E$ be a finite extension of $\Q_\ell$
with a fixed embedding $E\hookrightarrow\QQl$.
Let $\pi$ be a finitely generated free $\Z_\ell$-module  which is  a quotient of $\pi_1^{\rm ab}(X)$. Set
$$R = \sO_E\llbracket \pi \rrbracket, \quad \mathfrak R=R\otimes_{\sO_E} \QQl$$ and let $\sG =
\Spf(\sO_E \llbracket \pi \rrbracket ) $ be
the associated multiplicative formal Lie group. For $\sF\in D^b_c(X,\QQl)$ and $i,j\in \Z$ consider the subset
 \[
\Sigma^i_\pi(\sF,j) := \{ \sL\in \sG(\QQl ) \, |\, {\rm dim} \ H^i(X ,\sF\otimes\sL ) > j \}.
\]
of $\sG(\QQl)$, where we omit the index $\pi$ if it is clear from the context.
It is Zariski closed as one easily sees from combining Proposition~\ref{prop:propertfm}
and \cite[Thm.~7.6.9]{EGAIII}.

As $\pi\otimes_{\Z_\ell} \Q_\ell$ is dual to a subgroup of $H^1(X,\Q_\ell)$, it has a
canonical weight filtration (\cite[Sec.~2]{Jan10}). The weight zero part of  $H^1(X,\Q_\ell)$ is
equal to the kernel  $H^1(X,\Q_\ell)\to  H^1(X^{\rm reg},\Q_\ell)$, where
$X^{\rm reg}$ is
the regular locus of the reduced scheme $X_{\rm red}$.

\begin{ex}\label{ex:weightdo}
  \begin{itemize}
    \item[(1)]
      For $X$ proper, integral and geometrically unibranch the group $H^1(X,\Q_\ell)$ is pure of weight one.
    \item[(2)]
  For $X\subset Y$ an open
      subscheme of a smooth variety $Y$ over $F$ with $H^1(Y,\Q_\ell)=0$
    the cohomology group $H^1(X,\Q_\ell) $ is pure of weight two. Indeed,
\[
  H^1(X,\Q_\ell) \to
   H^2_{Y\setminus
      X}(Y,\Q_\ell) = \Q_\ell(-1)^{\oplus b}
  \]
  is injective and the group on the right side 
  is pure of weight two, where $b$ is the number of irreducible components of $Y\setminus
  X$ which are of codimension one.
 This holds for example for $X=\G_m^d\subset Y=\P^d_F$.
\end{itemize}
\end{ex}

\begin{thm}\label{thm.jump}
  Assume that  $\pi$ is pure of weight different from zero.
  Let
  $\sF\in D^b_c(X,\QQl)$ be arithmetic.
  \begin{itemize}
  \item[(1)] Then
$\Sigma^i(\sF,j)$ is quasi-linear for all $i,j\in \Z$.
\item[(2)]
  The non-flat locus
  $S\subset \Spm(\mathfrak R )=\sG(\QQl )$ of the $\mathfrak R$-module $\sFM^i(X,\sF)$ is quasi-linear.
\end{itemize}
\end{thm}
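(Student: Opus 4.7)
The plan is to parallel the proof of Theorem~\ref{thm:hl}, invoking the Main Theorem~\ref{main.thm} to deduce quasi-linearity from stability under a suitable Frobenius. Using arithmeticity of $\sF$ together with spreading and specialization as in \cite[Lem.~6.1.9]{BBD82} and \cite[Rmk.~1.7]{Dri01}, I would first reduce to the case $F=\overline{\F}_p$. Then $X$, $\pi$ and $\sF$ all descend to a finite subfield $F_0\subset F$, and after enlarging $F_0$ one may assume that $\sF$ is fixed up to quasi-isomorphism by the geometric Frobenius $\sigma\in\Gal(F/F_0)$. In particular $\sigma$ acts on $\pi$ and hence induces automorphisms of $\mathfrak R$ and of $\sG(\QQl)$.

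Both loci in question are Zariski closed: the jumping locus by combining Proposition~\ref{prop:propertfm} with upper semi-continuity \cite[Thm.~7.6.9]{EGAIII}, as already observed in the excerpt; the non-flat locus of the finitely generated $\mathfrak R$-module $\sFM^i(X,\sF)$ by the standard fact that the locally free locus of a coherent module over a noetherian ring is open. For $\sigma$-stability, the isomorphism $\sigma(\sF)\simeq\sF$ together with naturality gives, for every $s\in\sG(\QQl)$, an isomorphism
\[
 H^i(X,\sF\otimes\sL_s)\;\xrightarrow{\sim}\; H^i(X,\sF\otimes\sL_{\sigma(s)}),
\]
which shows $\sigma(\Sigma^i(\sF,j))=\Sigma^i(\sF,j)$. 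Applying the same functoriality to the tautological local system $\sL_{\mathfrak R}$ produces an $\mathfrak R$-module isomorphism $\sigma^*\sFM^i(X,\sF)\simeq \sFM^i(X,\sF)$, whence the non-flat locus is $\sigma$-stable.

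It remains to verify the hypotheses of the Main Theorem. By Tate's theorem \cite[Thm.~2]{Tat66}, as in the proof of Theorem~\ref{thm:hl}, $\sigma$ acts semi-simply on $\pi\otimes_{\Z_\ell}\Q_\ell$. The assumption that $\pi$ is pure of weight $w\ne 0$ means that, under any fixed embedding $\iota\colon\QQl\hookrightarrow\C$, all eigenvalues of $\sigma$ on $\pi\otimes_{\Z_\ell}\Q_\ell$ share a single archimedean absolute value $q^{w/2}\ne 1$, where $q=|F_0|$. Theorem~\ref{main.thm} then applies and yields quasi-linearity of both $\Sigma^i(\sF,j)$ and of the non-flat locus of $\sFM^i(X,\sF)$.

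The most delicate point I expect is making the $\sigma$-equivariance in part~(2) precise, since the tautological local system $\sL_{\mathfrak R}$ is itself twisted by the induced action of $\sigma$ on $\mathfrak R$: one must match the Frobenius action on $\mathfrak R$ with the Frobenius-invariance of $\sF$ so as to conclude that the non-flat locus is translated to itself rather than to a locus transformed by an $\mathfrak R$-automorphism. Once this bookkeeping is in place, the remainder of the argument reduces immediately to the Main Theorem.
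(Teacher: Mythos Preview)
Your overall strategy matches the paper's: reduce to $F=\overline{\F}_p$, show the loci are closed and Frobenius-stable, then apply Theorem~\ref{main.thm}. However, there is a genuine gap in your verification of the semi-simplicity hypothesis. You invoke Tate's theorem ``as in the proof of Theorem~\ref{thm:hl}'', but in that proof $X$ is projective and one uses the identification $\pi\cong T_\ell({\rm Alb}(X))$ coming from the Albanese map~\cite[Ann.~II]{Ser58}. In the present theorem $X$ is only assumed to be a separated connected scheme of finite type, so this identification is not available, and the weight-two case (e.g.\ $X=\G_m^d$, see Example~\ref{ex:weightdo}(2)) is not touched by Tate's theorem on abelian varieties at all.

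The paper fills this gap as follows: after reducing to $\pi$ being the full $\ell$-adic completion of $\pi_1^{\rm ab}(X)$ modulo torsion, it chooses a regular closed curve $Y\hookrightarrow X$ such that $\pi_1(Y)\to\pi_1(X)\to\pi$ is surjective (using~\cite[App.~C]{Dri12}). Then $\pi\otimes_{\Z_\ell}\Q_\ell$ is a quotient of $V_\ell({\rm Alb}(Y))$ for the semi-abelian Albanese of $Y$; on the abelian part semi-simplicity follows from Tate~\cite[Thm.~2]{Tat66}, while on the torus part the Frobenius becomes a scalar after a finite extension of $F_0$, hence is trivially semi-simple. By contrast, the point you flag as ``most delicate''---the $\sigma$-equivariance for part~(2)---is routine; the paper simply notes that part~(2) is proved almost verbatim the same way as part~(1).
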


\begin{rmk}
Conjecturally, the theorem remains true if we only assume that
$\pi$ is mixed of non-zero weights instead of pure. Indeed, for $\ch(F)=0$,  this is true
by~\cite[Thm.~1.5] {EK20} and the general case would follow along the same lines from a
``mixed version'' of~\cite{deJ00}.

If  $\sF\in D^b_c(X,\QQl)$ is not assumed to be arithmetic we can still conjecture the theorem
to hold without the part that the translation is by torsion points $s_r\in \sG(\QQl)$ in
the  Definition~\ref{def:ql}
of quasi-linearity.
 \end{rmk}

\begin{rmk}
 Results related  to Theorem~\ref{thm.jump}  for  $\ch(F)=0$ have been extensively  studied using complex analytic techniques,
  see \cite{GL91}, \cite[Thm.~3.1,Cor.~6.4]{Sim93}, \cite[Thm.~3.1]{Sab92}, \cite[Sec.~11]{KW15}  and~\cite[Thm.~1.1]{BW15}.
For the torus $X=\G_m^d$
part (1) of Theorem~\ref{thm.jump}  proves part of a conjecture of Loeser~\cite[Intro.~p.9]{Loe96} under our arithmeticity assumption.
\end{rmk}

\begin{proof}[Proof of Theorem~\ref{thm.jump}]
  For simplicity of notation we stick to part (1), as part (2) is proved almost verbatim
  the same way.
  By the same technique as in the proof of Hard Lefschetz in Section~\ref{sec:HL}, we can assume
  that $F$ is the algebraic closure of a finite field $F_0$ and that
the scheme $X$ descends to a scheme $X_0$ of finite type over $F_0$. By the arithmeticity
condition on $\sF$, after replacing $F_0$ by a finite extension, for any $\sigma\in {\Gal}(F/F_0)$
we have $\sF \simeq \sigma (\sF)\in D^b_c(X ,\QQl )$.
Without loss of generality we can assume that $\pi$ is the $\ell$-adic completion of
$\pi_1^{\rm ab}(X)$ modulo torsion. Then $ {\Gal}(F /F_0)$ acts on $\pi$. It follows that  $\Sigma^i(\sF,j)$ is stabilized by any $\sigma \in   {\Gal}(F/F_0)$ for $i,j\in \Z$.
We claim that we can apply
Theorem~\ref{main.thm} with $\sigma\in  {\Gal}(F /F_0)$ the Frobenius, to see that
$\Sigma^i(\sF,j)$ is quasi-linear. For this we have to see that the Frobenius $\sigma$
acts semi-simply on $\pi\otimes\Q_\ell$.

 We may assume that $X$ has dimension $\ge 1$. Choose a regular closed subscheme $Y\hookrightarrow X$ of dimension one such that the
composition $\pi_1(Y)\to\pi_1(X)\to\pi$ is surjective, for example by
using~\cite[App.~C]{Dri12}. By~\cite[Ann.~II]{Ser58} the  Albanese map  $\pi_1(Y)\to
T_\ell( {\rm Alb}(Y))$ identifies $T_\ell( {\rm Alb}(Y))$ with the $\ell$-adic completion
of $\pi_1(Y)$ modulo the torsion subgroup. So  $\pi\otimes_{\Z_\ell}\Q_\ell$ is isomorphic to a quotient  of the torus
part or of the abelian part of the Tate module $V_\ell( {\rm Alb}(Y))$.    By Tate~\cite[Thm.~2]{Tat66} the
Frobenius action  on the Tate module of an abelian variety is
semi-simple. The Frobenius action on the Tate module of the torus is a scalar
multiplication after replacing $F_0$ by a finite extension.
This finishes the proof.

\end{proof}

\subsection{Generic vanishing}\label{subsec:genvan}

This section is motivated by~\cite[Thm.~1.1]{BSS18}. Except for the proof of Theorem~\ref{thm.genvan},
what we say is only an $\ell$-adic translation of  {\it loc.cit. }
The classical question on   a lower bound for the codimension of the jumping loci
$\Sigma^i(\sF,0)$ has been initiated by Green-Lazarsfeld for line bundles~\cite{GL91}.

We
consider a smooth projective variety $X$ over the algebraically closed field $F$ of
characteristic different from $\ell$. As before $\pi$  is a  free $\Z_\ell$-module
quotient of $\pi_1^{\rm ab}(X )$. Our
criterion on generic vanishing depends on properties of the tower of Galois coverings
\[
\cdots \to X_{n+1} \to X_n \to \cdots \to X_0 =X
\]
  with
Galois groups $\Gal(X_n/X)=\pi/\ell^n\pi$. We then write $X_\infty$ for this tower and $H^i(X_\infty ,\Z_\ell )$ for $\colim_n
H^i(X_n , \Z_\ell )$. Let $\eta\in H^2(X , \Z_\ell )$ be a polarization, i.e.\ the
first Chern class of an ample line bundle, where we omit Tate twists for simplicity of notation.
We consider a situation in which $\eta $ becomes {\it divisible} in $H^2(X_\infty
,\Z_\ell)$. Note that a class
 $z\in  H^i(X, \Z_\ell)$
becomes divisible in $H^i(X_\infty, \Z_\ell)$
  if
\[\forall m, \exists n \ \text{such  that} \  p_n^* ( z) \in \ell^m H^i(X_n, \Z_\ell) \subset H^i(X_n, \Z_\ell),\]
where $p_n \colon X_n\to X$ is the covering map.

We describe now two kinds of Examples~\ref{eq.tower10} and~\ref{eq.tower20}.

\begin{ex}\label{eq.tower10}
Let $X$ be an abelian variety of dimension $d$ with
  polarization $\eta$. Let $V$ be a
  maximal isotropic subspace of the Tate module $T_\ell(X)\otimes_{\Z_\ell} \Q_\ell$   with respect to the Weil
  pairing associated to $\eta$, so $\dim_{\Q_\ell}(V)=d$. Note that $T_\ell(X)$   can be identified with  the $\ell$-adic completion of
  $\pi_1^{\rm ab}(X)$.
 We set $\pi=T_\ell(X)/
  T_\ell(X)\cap V$ which is a free  $\Z_\ell$-module of rank $d$.  Then $\eta$ becomes   divisible in $H^2(X_\infty ,\Z_\ell
  )$ by~\cite[Thm.~23.3]{Mum70}.
\end{ex}

\begin{ex}\label{eq.tower20}
Let $X=X^{(1)}\times \cdots \times X^{(d)}$ be a product of smooth proper curves with polarizations
$\eta^{(1)},\ldots , \eta^{(d)}$ and let  $ X_\infty^{(j)}\to X^{(j)}$ be towers with Galois group
$\Z_\ell$. We consider the tower $ X_n =  X^{(1)}_n      \times \cdots \times X^{(d)}_n$
and the polarization $\eta= \eta^{(1)} + \cdots + \eta^{(d)}$ of $X$.  Then $\eta$ becomes
divisible in  $H^2(X_\infty ,\Z_\ell )$.
\end{ex}

\begin{rmk}
 The tower in Example~\ref{eq.tower20} is used in \cite[Prop.4.5.1]{BBD82} to prove via  \cite[Cor.4.5.5]{BBD82}
 the purity theorem for
 the intermediate perverse extension in \cite[Cor.~5.3.3]{BBD82}.
\end{rmk}

\begin{thm}\label{thm.genvan}
Let $X$ be a smooth projective variety over the algebraically closed field $F$.
  Assume that $\sF \in {}^p\! D^{\le 0}_c(X,\QQl )$ is arithmetic and that the pullback to $H^2(X_\infty , \Z_\ell )$
  of a polarization $\eta \in H^2(X , \Z_\ell )$
  becomes divisible. Then
    \[
\codim_{\Spm (\mathfrak R )}(\Sigma^i(\sF,0)) \ge i
\]
for all $i\ge 0$.
\end{thm}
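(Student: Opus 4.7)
The plan is to combine the Hard Lefschetz isomorphism of Theorem~\ref{thm:hl} with the vanishing of the Lefschetz operator on the Fourier--Mellin transform (Proposition~\ref{prop.keyvanish}) supplied by the divisibility of $\eta$ on the tower $X_\infty$, supplemented by the duality of Proposition~\ref{prop:propertfm}(4) in order to upgrade generic torsion of $\sFM^\ast(X,\sF)$ into the desired codimension bound.

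First I would reduce to the case that $\sF$ is an arithmetic simple perverse sheaf. For $\sF\in {}^p\!D^{\le 0}_c(X,\QQl)$ the perverse cohomology sheaves ${}^p\!H^q(\sF)$ vanish for $q>0$ and are each arithmetic perverse, and the perverse spectral sequence $H^p(X,{}^p\!H^q(\sF)\otimes\sL)\Rightarrow H^{p+q}(X,\sF\otimes\sL)$ yields
\[
 \Sigma^i(\sF,0) \subseteq \bigcup_{q\le 0}\Sigma^{i-q}({}^p\!H^q(\sF),0).
\]
Since $i-q\ge i$ for $q\le 0$, it suffices to prove $\codim\Sigma^j(\sG,0)\ge j$ for every arithmetic perverse $\sG$ and every $j\ge 0$. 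A Jordan--H\"older filtration in the abelian category of arithmetic perverse sheaves, together with the long exact cohomology sequence, reduces further to $\sG$ arithmetic simple perverse.

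Next I would combine Hard Lefschetz with the divisibility to obtain generic vanishing of the Fourier--Mellin transform of $\sG$. For arithmetic simple perverse $\sG$, Theorem~\ref{thm:hl} gives for every $s\in\sG(\overline\Q_\ell)$ an isomorphism $\cup\eta^i\colon H^{d-i}(X,\sG\otimes\sL_s)\xrightarrow{\sim}H^{d+i}(X,\sG\otimes\sL_s)$, and Lemma~\ref{lem.geniso} promotes this to an isomorphism of Fourier--Mellin modules after inverting a generic element of $\mathfrak R$. On the other hand, since $\eta$ (and hence $\eta^i$) becomes divisible in $H^{2i}(X_\infty,\Z_\ell)$ by hypothesis, Proposition~\ref{prop.keyvanish} gives that $\cup\eta^i\colon\sFM^{d-i}(X,\sG)\to\sFM^{d+i}(X,\sG)$ is the zero map of $\mathfrak R$-modules. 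The two facts together force $\sFM^{d\pm i}(X,\sG)$ to be $\mathfrak R$-torsion for every $i\ge 1$, so on a dense open $U\subseteq\Spm(\mathfrak R)$ the complex $\sFM(X,\sG)|_U$ is concentrated in cohomological degree $d$. To extract the sharp codimension bound $\codim\Sigma^j(\sG,0)\ge j$ from this generic concentration, I would apply the same reasoning to the Verdier dual $\sG^\vee$ (also arithmetic perverse) and invoke the duality $R\Hom_{\mathfrak R}(\sFM(X,\sG),\mathfrak R)\simeq \sFM(X,\sG^\vee)'$ of Proposition~\ref{prop:propertfm}(4). The resulting hypercohomology spectral sequence
\[
 E_2^{p,-m}={\rm Ext}^p_{\mathfrak R}(\sFM^m(X,\sG),\mathfrak R)\Longrightarrow\sFM^{p-m}(X,\sG^\vee)',
\]
together with the generic concentration of $\sFM^\ast(X,\sG^\vee)$ in degree $d$, yields vanishing of ${\rm Ext}^p_{\mathfrak R}(\sFM^j(X,\sG),\mathfrak R)$ for $p$ below a threshold depending on $j$; by the Auslander--Buchsbaum formula on the regular local rings of $\mathfrak R$ this converts into the codimension bound for the support of $\sFM^j(X,\sG)$, and hence for $\Sigma^j(\sG,0)$ via the Tor spectral sequence relating the two.

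The main obstacle will be this last step: the naive combination of Hard Lefschetz and key vanishing only yields generic torsion (codim~$\ge 1$), and extracting the sharp codim~$\ge j$ requires a careful duality argument exploiting both the perverse constraints on $\sG$ and on $\sG^\vee$, and careful bookkeeping of which cohomology degrees of $\sFM$ survive generically.
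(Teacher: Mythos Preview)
Your reduction to simple perverse $\sG$ matches the paper, and your observation that Hard Lefschetz together with the divisibility of $\eta$ on $X_\infty$ forces $\sFM^i(X,\sG)$ to be $\mathfrak R$-torsion for $i\ne 0$ is correct (the indices should be $\pm i$ rather than $d\pm i$ once $\sG$ is perverse, but this is cosmetic). As you yourself flag, this gives only codimension~$\ge 1$.

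The proposed duality upgrade is the genuine gap, and it cannot be made to work from the hypotheses you have assembled. Knowing only that $\sFM^m(X,\sG)$ and $\sFM^m(X,\sG^\vee)$ are torsion for all $m\ne 0$, plus the Ext spectral sequence from Proposition~\ref{prop:propertfm}(4), does not force the Ext-vanishing you need for Auslander--Buchsbaum: over a regular ring $\mathfrak R$ the complex $K=(\mathfrak R/(x))[-2]$ and its $\mathfrak R$-dual $(\mathfrak R/(x))[1]$ satisfy all of your constraints (bounded, torsion cohomology concentrated away from degree~$0$, dual of the same shape), yet $H^2(K)$ has codimension-one support. Nothing in your list of inputs distinguishes $\sFM(X,\sG)$ from such a $K$. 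The paper in fact runs duality in the \emph{opposite} direction (Corollary~\ref{cor:genvan1}), with Theorem~\ref{thm.genvan} as input rather than output.

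The ingredients you are missing are Theorem~\ref{thm.jump} and Lemma~\ref{lem:crucial}. The paper argues by contradiction: if an irreducible component $S\subset\Sigma^i(\sF,0)$ has $\codim S<i$, then by Theorem~\ref{thm.jump} it is, after a torsion twist absorbed into $\sF$, the subgroup $\Spm(\mathfrak R')$ corresponding to a quotient $\pi/\pi'$ with $\dim\pi'=\codim S<i$. One now reruns your Lefschetz argument for the Fourier--Mellin transform over $\mathfrak R'$, i.e.\ on the \emph{sub}-tower $X'_\infty$ with Galois group $\pi/\pi'$. Lemma~\ref{lem:crucial}, proved via the Hochschild--Serre spectral sequence of the residual cover $X_\infty\to X'_\infty$ with group $\pi'$ of $\ell$-cohomological dimension $\dim\pi'<i$, shows that $\eta^i$ (not merely $\eta$) becomes divisible on $X'_\infty$. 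So $\cup\eta^i$ vanishes on $\sFM^*_{\pi/\pi'}$ by Proposition~\ref{prop.keyvanish}, while Hard Lefschetz makes it a generic isomorphism over $\mathfrak R'$; since $\sFM^i_{\pi/\pi'}$ is generically nonzero by the choice of $S$, this is the contradiction. The passage to a smaller $\pi/\pi'$, made possible by quasi-linearity, is exactly what converts ``codimension~$\ge 1$'' into ``codimension~$\ge i$''.
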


The proof of Theorem~\ref{thm.genvan} is given at the end of this section. 
  It uses Proposition~\ref{prop.keyvanish}, Theorem~\ref{thm:hl} and  Theorem~\ref{thm.jump}.

\begin{cor}\label{cor:genvan1}
For  $\sF \in{} ^{p}\! D^{\ge 0}_c(X,\QQl )$  the Fourier-Mellin transform satisfies
  $\sFM^i(X,\sF)=0$ for $i<0$.
\end{cor}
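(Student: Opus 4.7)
The plan is to dualize and reduce to Theorem~\ref{thm.genvan} applied to $\sF^\vee \in {}^p D^{\le 0}$. Since $X$ is proper, $R\Gamma_c$ and $R\Gamma$ agree, so Proposition~\ref{prop:propertfm}(4) after base change from $R$ to $\mathfrak R$ yields a canonical isomorphism
\[
R\Hom_{\mathfrak R}(\sFM(X, \sF), \mathfrak R) \simeq \sFM(X, \sF^\vee)',
\]
where the twist $'$ of Remark~\ref{rmk:fmdual} does not affect ${\rm Ext}$-dimensions. Since $R = \sO_E \llbracket X_1, \ldots, X_b \rrbracket$ is regular local and $\mathfrak R = R \otimes_{\sO_E} E$ is therefore regular Noetherian of Krull dimension $b$, one has $D^b_{\rm coh}(\mathfrak R) = D^b_{\rm perf}(\mathfrak R)$, so $R\Hom(-, \mathfrak R)$ is reflexive and I obtain $\sFM(X, \sF) \simeq R\Hom_{\mathfrak R}(\sFM(X, \sF^\vee)', \mathfrak R)$. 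The hyper-${\rm Ext}$ spectral sequence
\[
E_2^{p, q} = {\rm Ext}^p_{\mathfrak R}\bigl(\sFM^{-q}(X, \sF^\vee), \mathfrak R\bigr) \Rightarrow \sFM^{p+q}(X, \sF)
\]
then reduces the desired vanishing $\sFM^n(X, \sF) = 0$ for $n < 0$ to showing that ${\rm Ext}^p_{\mathfrak R}(\sFM^j(X, \sF^\vee), \mathfrak R) = 0$ for all $j > 0$ and all $0 \le p < j$.

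Since $\sF \in {}^p D^{\ge 0}$ gives $\sF^\vee \in {}^p D^{\le 0}$, Theorem~\ref{thm.genvan} supplies the codimension bound $\codim_{\Spm(\mathfrak R)} \Sigma^i(\sF^\vee, 0) \ge i$ for all $i \ge 0$. Combined with the inclusion
\[
{\rm supp}_{\mathfrak R}\, \sFM^j(X, \sF^\vee) \subseteq \bigcup_{i \ge j} \Sigma^i(\sF^\vee, 0),
\]
this forces every irreducible component of ${\rm supp}\, \sFM^j(X, \sF^\vee)$ to have codimension at least $j$ for $j > 0$. Since $\mathfrak R$ is regular, hence Cohen-Macaulay, the grade inequality ${\rm Ext}^p_{\mathfrak R}(M, \mathfrak R) = 0$ for $p$ strictly less than the minimum codimension of a component of ${\rm supp}\, M$ then delivers the required ${\rm Ext}$-vanishing and completes the argument.

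The main obstacle is justifying the displayed inclusion. My plan is to invoke the amplitude-from-fibers principle for perfect complexes: if $K$ is a perfect complex over a Noetherian local ring $(A, \mathfrak m)$ with residue field $k$ and $H^i(K \otimes_A^L k) = 0$ for all $i \ge j$, then a minimal free representative of $K$, whose term in degree $i$ has rank $\dim_k H^i(K \otimes_A^L k)$, forces $\mathcal H^i(K) = 0$ for $i \ge j$. Applied to $K = \sFM(X, \sF^\vee)$ localized at a closed point $s \in \Spm(\mathfrak R)$, whose derived fiber computes $R\Gamma(X, \sF^\vee \otimes \sL_s)$ by Proposition~\ref{prop:propertfm}(2), this yields the inclusion on closed points; density of closed points in every closed subset of $\Spm(\mathfrak R)$, i.e.\ the Jacobson property, then extends the containment to the full support.
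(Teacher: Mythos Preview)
Your argument is correct and follows the same strategy as the paper: dualize via Proposition~\ref{prop:propertfm}(4), establish the codimension estimate $\codim\bigl(\mathrm{supp}\,\sFM^j(X,\sF^\vee)\bigr)\ge j$ from Theorem~\ref{thm.genvan}, and then convert this into vanishing of the dual complex in negative degrees by the $\mathrm{Ext}$-spectral sequence together with the Cohen--Macaulay grade inequality. The paper packages the codimension estimate as a descending induction (using the Tor spectral sequence to identify $\sFM^i\otimes k(s)$ with $H^i(X,\sF^\vee\otimes\sL_s)$ once the higher supports are avoided) and then outsources the commutative algebra to \cite[Lem.~2.8]{BSS18}; your amplitude-from-fibers formulation for perfect complexes is the same content stated contrapositively, and your explicit grade argument is precisely what that cited lemma does.
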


\begin{proof}
Using Proposition~\ref{prop:propertfm}(4)  on duality together with
Remark~\ref{rmk:fmdual}, we see that
 \[
  \sFM(X,\sF)' \simeq R\Hom_{\mathfrak R}(
  \sFM(X,\sF^\vee) , \mathfrak R).\]
  We prove by descending induction on $i$ that
  \begin{equation}\label{eq:codimfm}
\codim\big({\rm supp} ( \sFM^i(X,\sF^\vee) )\big) \ge i .
\end{equation}
In fact,  Corollary~\ref{cor:genvan1} follows from~\eqref{eq:codimfm} and the above
duality  by  support estimates for dual complexes~\cite[Lem.~2.8]{BSS18}.

For $i>\dim(X)$ we have $ \sFM^i(X,\sF^\vee)=0$ for cohomological dimension reasons that,
i.e.\ ${\rm supp} ( \sFM^i(X,\sF^\vee) )$ is empty. This starts the induction.
Now we fix $i>0$ and we assume that~\eqref{eq:codimfm} is known in degrees bigger than $i$. From
Proposition~\ref{prop:propertfm}(2) and the Tor-spectral sequence we deduce that for $s\notin {\rm supp} (
\sFM^j(X,\sF^\vee))$ for all $j>i$, i.e.\ for $s$ outside of a closed subset of codimension $\ge i+1$, we have
\[
  \sFM^i(X,\sF^\vee)\otimes_{\mathfrak R} k(s) \cong H^i(X,\sF^\vee \otimes \sL_s ).
\]
The right hand side vanishes for $s$ outside of a closed subset of codimension $ \ge i$
by  Theorem~\ref{thm.genvan}. This shows~\eqref{eq:codimfm}.
  \end{proof}

\begin{cor}\label{cor:genvan}
  Assume that $X$ is an abelian variety and that $\pi=T_\ell(X)$. Let $\sF \in
{}^p\!  D^{\le 0}_c(X,\QQl )$ be arithmetic.
 Then
    \[
\codim_{\Spm (\mathfrak R )}(\Sigma^i(\sF,0)) \ge 2i
\]
for all $i\ge 0$.
\end{cor}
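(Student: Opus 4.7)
The plan is to combine the quasi-linear structure from Theorem~\ref{thm.jump} with Theorem~\ref{thm.genvan} applied to every maximal isotropic subspace of $V_\ell(X)$ coming from the Weil pairing (Example~\ref{eq.tower10}); the improvement from codim $\ge i$ to codim $\ge 2i$ then arises from symplectic linear algebra on $V_\ell(X)$.

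First, by Theorem~\ref{thm.jump}(1), I write $\Sigma^i(\sF,0)=\bigcup_r s_r\sH_r(\QQl)$ with $s_r$ torsion and each $\sH_r$ a formal Lie subgroup of $\sG$ corresponding to a saturated submodule $\pi'_r\subset T_\ell(X)$. Setting $W_r:=\pi'_r\otimes_{\Z_\ell}\Q_\ell\subset V_\ell(X)$, one has $\codim_{\sG}\sH_r=\dim_{\Q_\ell}W_r$, so it is enough to prove $\dim W_r\ge 2i$ for each $r$. Fix one component; abbreviate $W=W_r$, $s=s_r$, $\sH=\sH_r$. To remove the torsion translate, I replace $\sF$ by $\sF\otimes \sL_s$, which is again arithmetic (the torsion local system $\sL_s$ is arithmetic) and still in ${}^p\!D^{\le 0}_c(X,\QQl)$, and observe that $\Sigma^i(\sF\otimes\sL_s,0)=s^{-1}\Sigma^i(\sF,0)$ contains the untranslated component $\sH$, so I may assume $s=1$.

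Next, for each maximal isotropic subspace $V\subset V_\ell(X)$ (with respect to the Weil pairing from $\eta$), the tower of Example~\ref{eq.tower10} makes Theorem~\ref{thm.genvan} applicable to the rank $d$ quotient $\pi^V=T_\ell(X)/(T_\ell(X)\cap V)$, yielding
\[
\codim_{\sG^V}\big(\Sigma^i(\sF,0)\cap \sG^V\big)\ge i,
\]
where $\sG^V\subset \sG$ is the codim-$d$ formal Lie subgroup dual to $T_\ell(X)\cap V$. Since $1\in \sH\cap \sG^V$, the intersection is non-empty, and a direct calculation with saturated submodules gives $\dim(\sH\cap \sG^V)=d-\dim W+\dim(W\cap V)$; combining with the codim bound,
\[
\dim W\;\ge\; i+\dim(W\cap V).
\]

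The key is to choose $V$ making $\dim(W\cap V)$ as large as possible. Since $W\cap V$ is automatically isotropic, it is bounded by the maximal dimension of an isotropic subspace of $W$, which equals $\tfrac{1}{2}(\dim W+\dim W_0)$ where $W_0:=W\cap W^{\perp_s}$ is the isotropic radical of $W$. By Witt's extension theorem I may choose a maximal isotropic $V\subset V_\ell(X)$ containing a maximal isotropic subspace of $W$, realizing this value. Substituting gives $\dim W-\dim W_0\ge 2i$, hence $\dim W\ge 2i+\dim W_0\ge 2i$. (When $W$ is isotropic, $W_0=W$ and the inequality forces $i=0$, so for $i\ge 1$ no component is isotropic and the non-degenerate argument applies cleanly.) The main subtlety is the torsion translation by $s_r$ conflicting with the symplectic choice of $V$; this is precisely why the reduction to $s=1$ in Step~2 is essential, and it uses crucially that the $s_r$ produced by Theorem~\ref{thm.jump} are torsion.
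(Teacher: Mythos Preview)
Your proof is correct and follows essentially the same route as the paper: reduce via Theorem~\ref{thm.jump} to a single component $\sH$ with torsion translate absorbed into $\sF$, intersect with the subgroup $\sG^V$ coming from a maximal isotropic $V$, and apply Theorem~\ref{thm.genvan} to get $\dim W-\dim(W\cap V)\ge i$; then choose $V$ so that $\dim(W\cap V)\ge\tfrac12\dim W$. The paper phrases the last step tersely as ``choose $V$ with $2\dim(\pi'\cap V)\ge\dim(\pi')$'' without justification, whereas you spell out the symplectic linear algebra (Witt extension, the radical $W_0$) and thereby obtain the marginally sharper $\dim W\ge 2i+\dim W_0$; but the underlying argument is the same.
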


\begin{proof}
By Theorem~\ref{thm.jump} we know that up to a translation by a torsion character, which
we can assume to be trivial in the following,
each irreducible component $S$ of $ \Sigma^i_\pi(\sF,0)$ corresponds to a free $\Z_\ell$-module quotient
$\pi/\pi'$ of $\pi$. In particular,  $\codim (S)={\rm rank} (\pi')$.
 Let $\pi^{\rm half}$ be a quotient of $\pi=T_\ell(X)$  by an isotropic
subgroup $V\cap \pi$ as in Example~\ref{eq.tower10}, where $V$ is chosen  such that
  $2\, {\rm rank} (\pi'\cap V)\ge {\rm rank}  (\pi')$. Clearly, ${\rm
    rank}(\pi^{\rm half})= \dim(X)$.  We have an exact sequence
\ga{}{ 0\to V\cap \pi'\to \pi' \to {\rm im}(\pi'\to \pi^{\rm half})\to 0.\notag
}
So 
\[
{\rm rank}(\pi')/2\ge  {\rm rank} \left( \im (\pi'\to \pi^{\rm half} ) \right)  \ge {\rm codim} (\Sigma^i_{\pi^{\rm half}}(\sF,0))    \ge i.
\]
Here the right inequality follows from Theorem~\ref{thm.genvan} applied to the setup of
Example~\ref{eq.tower10} where $X_\infty/X$ has automorphism group $\pi^{\rm half}$. 
\end{proof}

\begin{rmk}\label{rmk:genvan1}
One can expect the theorem and its corollaries to hold for non-arithmetic $\sF$. In fact,   for $\ch(F)=0$ this
 can be shown by essentially the same technique using that  Hard Lefschetz is known
in general~\cite[Lem.2.6]{Sim92}  (and more generally \cite[Cor.1.1]{Moc07}) and using \cite[Thm.~3.1 (c)]{Sim93} instead of Theorem~\ref{main.thm}.
\end{rmk}

\begin{rmk}
  For $\ch(F)=0$ Corollary~\ref{cor:genvan} is equivalent
to  ~\cite[Thm.~4.1]{Sch15} and \cite[Thm.~1.3]{BSS18}, who do not
need the arithmeticity assumption, compare Remark~\ref{rmk:genvan1}, see also~\cite{KW15}.
For $\ch(F)>0$,  the  inequality    $\codim(\Sigma^i(\sF,0)) >0$ for $i> 0$ is shown in \cite[Intro]{Wei16}.
\end{rmk}

\begin{rmk}
In case of Example~\ref{eq.tower20}, the  theorem and its corollaries can be easily deduced
from \cite[Thm.~1.1]{Esn21}, even for
non-arithmetic $\sF$. In fact in this
situation we deduce from {\it loc. cit.} that Corollary~\ref{cor:genvan1} even holds integrally, i.e.\ for $\sF\in {}
^p \!D^{\ge 0}_c(X,\F_\ell)$ we have $H^i(X, \sF \otimes_{\F_\ell } \sL_{\F_\ell \llbracket
\pi \rrbracket } )=0$ for $i<0$. Bhatt--Schnell--Scholze ask \cite[Rmk.~2.11]{BSS18}  whether the analog of the
latter integral result is true in the situation of Example~\ref{eq.tower10}.

\end{rmk}

In the proof of Theorem~\ref{thm.genvan} the following lemma is crucial.

\begin{lem} \label{lem:crucial}
Let $\pi'$ is a closed subgroup of $\pi$ with $\pi/\pi'$ torsion free.  Let
 $X'_\infty \to X$ be the Galois tower of $\pi/\pi'$ and  $r={\rm rank}  ( \pi' )+1$. Then $\eta^r$ becomes divisible
in $H^{2r}(X'_\infty, \Z_\ell)$.
\end{lem}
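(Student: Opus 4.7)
My plan is to analyze the pro-\'etale Galois cover $f\colon X_\infty \to X'_\infty$, whose Galois group is the free $\Z_\ell$-module $\pi'$ of rank $d_1 = r-1$, via its Hochschild-Serre spectral sequence, exploiting that $\pi'\cong \Z_\ell^{d_1}$ has $\ell$-adic cohomological dimension $d_1$. For each integer $M\ge 1$, I would work with the multiplicative spectral sequence
\[
E_2^{p,q} = H^p\bigl(\pi',\, H^q(X_\infty, \Z/\ell^M)\bigr) \Longrightarrow H^{p+q}(X'_\infty, \Z/\ell^M),
\]
whose induced filtration $F^\bullet$ on the abutment satisfies $F^p\cdot F^q \subset F^{p+q}$. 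Because $\pi'$ is a torsion-free compact $\ell$-adic Lie group of dimension $d_1$, the relevant continuous cohomology vanishes above degree $d_1$, so $E_2^{p,q}=0$ for $p > d_1$, and hence $F^{d_1+1} H^n(X'_\infty, \Z/\ell^M) = 0$ for every $n$ and every $M$.

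Given this setup, fix $M$. By the divisibility hypothesis of Theorem~\ref{thm.genvan}, for some $n$ the class $\eta|_{X_n}$ lies in $\ell^M H^2(X_n, \Z_\ell)$, so $\eta$ vanishes in $H^2(X_\infty, \Z/\ell^M) = \colim_n H^2(X_n, \Z/\ell^M)$. The edge map carries $\eta|_{X'_\infty} \in H^2(X'_\infty, \Z/\ell^M)$ to this vanishing class in $H^2(X_\infty, \Z/\ell^M)^{\pi'}$, so $\eta|_{X'_\infty} \in F^1$. Multiplicativity of the filtration then gives
\[
\eta^r|_{X'_\infty}\;\in\; F^r H^{2r}(X'_\infty, \Z/\ell^M) \;=\; F^{d_1+1} H^{2r}(X'_\infty, \Z/\ell^M) \;=\; 0.
\]
The Bockstein exact sequence $H^{2r}(X'_\infty,\Z_\ell) \xrightarrow{\ell^M} H^{2r}(X'_\infty,\Z_\ell) \to H^{2r}(X'_\infty,\Z/\ell^M)$ then converts mod-$\ell^M$ vanishing into $\eta^r|_{X'_\infty} \in \ell^M H^{2r}(X'_\infty, \Z_\ell)$. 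Since $M$ is arbitrary, $\eta^r$ becomes divisible in $H^{2r}(X'_\infty, \Z_\ell)$, as required.

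The main obstacle is the cohomological-dimension vanishing $H^p(\pi',-)=0$ for $p>d_1$ on the $E_2$ page: na\"\i{}ve profinite group cohomology of $\pi'\cong \Z_\ell^{d_1}$ with discrete $\ell$-torsion coefficients is unbounded in degree, so the spectral sequence must be set up in its continuous / pro-\'etale form, where $H^q(X_\infty,\Z_\ell)$ is treated as a continuous $\Z_\ell\llbracket\pi'\rrbracket$-module and Lazard's theorem on compact $\ell$-adic analytic groups supplies the bound $d_1$. Matching this formulation with the colimit definition of $H^*(X_\infty,\Z_\ell)$ used elsewhere in the paper---so that ``$\eta$ becomes divisible'' translates into ``$\eta$ vanishes mod $\ell^M$ on $X_\infty$''---is the only nontrivial technical point; once this is in hand, the three steps above are essentially formal. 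A direct transfer argument on $X_n \to X'_n$ gives the estimate $\ell^{n d_1} \eta^r|_{X'_n} = \ell^{r N(n)} (f_n)_*\beta_n^r$ but leaves $\ell$-power torsion in $H^{2r}(X'_n,\Z_\ell)$ to be controlled, which the Hochschild-Serre route sidesteps by working modulo $\ell^M$ at the level of the tower from the outset.
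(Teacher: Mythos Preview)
Your approach is essentially identical to the paper's: the Hochschild--Serre spectral sequence for the cover $X_\infty \to X'_\infty$ with $\Z/\ell^M$-coefficients, multiplicativity of the filtration, and the vanishing of $E_2^{p,q}$ for $p>r-1$.

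However, your stated ``main obstacle'' is a misconception. Ordinary profinite group cohomology of $\pi'\cong\Z_\ell^{d_1}$ with discrete $\ell$-primary torsion coefficients \emph{is} bounded in degree: one has $\mathrm{cd}_\ell(\Z_\ell)=1$ because $\Z_\ell$ is a free pro-$\ell$ group of rank one (Serre, \emph{Cohomologie Galoisienne}, I.\S3--4), and then $\mathrm{cd}_\ell(\Z_\ell^{d_1})=d_1$ by the standard additivity under products. The module $H^q(X_\infty,\Z/\ell^M)=\colim_n H^q(X_n,\Z/\ell^M)$ is genuinely a discrete $\ell$-torsion $\pi'$-module (the $\pi'$-action on each finite stage factors through a finite quotient), so the Hochschild--Serre spectral sequence for the pro-\'etale Galois cover with profinite Galois group $\pi'$ applies directly with its usual $E_2$-page, and $E_2^{p,q}=0$ for $p>d_1$ is immediate. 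There is no need to invoke Lazard's theory of $p$-adic analytic groups, nor to pass to a continuous-cohomology formalism with $\Z_\ell$-coefficient modules; the paper simply writes ``${\rm cd}_\ell(\pi')=r-1$'' and moves on. Once you drop this unnecessary detour, your argument and the paper's coincide.
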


\begin{proof}
  The lemma is equivalent to saying that for all $m>0$,  the image of $\eta^r$ in $H^{2r}(X'_\infty,
  \Z/\ell^m)$ vanishes.
 The Hochschild-Serre spectral sequence of the covering $X_\infty \to X'_\infty$ yields a
 spectral sequence
 \[
E^{pq}_2 = H^p(\pi', H^q(X_\infty , \Z /\ell^m  ) ) \Rightarrow H^{p+q}(X'_\infty,  \Z /\ell^m ),
\]
with associated filtration $F^* H^{p+q}(X'_\infty),$ which is compatible with cup-product. As $\eta $ vanishes in
$$E_2^{0  2}\subset H^2(X_\infty,\Z /\ell^m ), $$ the image of $\eta$ in $
H^{2}(X'_\infty,  \Z /\ell^m)$ lies in $ F^1
H^{2}(X'_\infty,  \Z /\ell^m)$. So the image of $\eta^r $ lies in $ F^r
H^{2r}(X'_\infty,  \Z /\ell^m) .$  As
 ${\rm cd}_\ell (\pi')=r-1$, $E^{pq}_2=0$ for $p>r-1$. Thus
 $$ F^r
H^{2r}(X'_\infty,  \Z /\ell^m)=0 .$$
This finishes the proof.
\end{proof}

\begin{proof}[Proof of Theorem~\ref{thm.genvan}]
  If $\sF_1 \to \sF_2 \to \sF_3 \to \sF_1[1]$ is an exact triangle in $D^b_c(X,\QQl)$ such that
  Theorem~\ref{thm.genvan} holds for $\sF=\sF_1$ and for $\sF=\sF_3$ then it also holds
  for $\sF_2$.
By~\cite[Thm.~4.3.1]{BBD82}  we can therefore assume that $\sF$ is a
simple perverse sheaf.

Assume that the theorem fails in some degree $i>0$. Let $S$
be an irreducible component of $\Sigma^i(\sF,0)$ of codimension $<i$ in $\Spm(\sR)=\sG(\QQl)$. By Theorem~\ref{thm.jump}(1) we see that
$S$ is the translation by a torsion  point $s$ of a closed subset
of the form $\Spm(\mathfrak R' ) $, where $\mathfrak R' = \Z_\ell \llbracket
\pi/\pi'\rrbracket  \otimes_{\Z_\ell} \QQl $. Replacing $\sF$ by $\sF\otimes \sL_s$ we can assume without
loss of generality that $s=1$, i.e.\  $S= \Spm (\mathfrak R')$.

In terms of the Fourier-Mellin transform we see that
\begin{equation} \label{eq.fmnonvan}
\sFM_{\pi/\pi'}^i(X,\sF)\otimes_{\mathfrak R'} {\rm Frac}({\mathfrak R'}) \ne 0,
\end{equation}
because for a generic point $s\in \Spm (\mathfrak R' )$ we have   \[
\sFM_{\pi/\pi'}^i(X,\sF)\otimes_{\mathfrak R'} k(s)\cong H^i(X, \sF\otimes \sL_s )\] by
Proposition~\ref{prop:propertfm} and because the cohomology group on the right does not vanish.

By Theorem~\ref{thm:hl} and Lemma~\ref{lem.geniso} we see that the Lefschetz map
\begin{equation}\label{eq:fmnonvan2}
\sFM_{\pi/\pi'}^{-i}(X,\sF) \xrightarrow{\cup  \eta^i} \sFM_{\pi/\pi'}^i(X,\sF)
\end{equation}
 becomes an isomorphism when tensored with  $ {\rm Frac}({\mathfrak R'})$, so the map~\eqref{eq:fmnonvan2} is non-vanishing according
to~\eqref{eq.fmnonvan}. But by Proposition~\ref{prop.keyvanish} and Lemma~\ref{lem:crucial}
we see that the map~\eqref{eq:fmnonvan2} vanishes, since  ${\rm rank}(\pi')= \codim (S) <i$. This
is a contradiction and  finishes the proof.
\end{proof}


\begin{thebibliography}{1}
\bibitem[BSS18]{BSS18} \textsc{ Bhatt, B., Schnell, C., Scholze, P.}:  {\it Vanishing theorems for perverse sheaves  on abelian varieties, revisited}, Selecta Math.  {\bf 24} (2018), no. 1, 63--84.
\bibitem[BBD82]{BBD82} \textsc{ Beilinson, A., Bernstein, J., Deligne, P.}: {\it Faisceaux pervers}, Analysis and topology on singular spaces, I (Luminy, 1981), Ast\'erisque {\bf 100} (1982), 5--171.
\bibitem[BW15]{BW15} \textsc{Budur, N., Wang, B.}: {\it Cohomology jump loci of quasi-projective varieties}, Ann. Sci. \'Ecole Norm. Sup. {\bf 48}  (2005), no.~1,  227--236.
 \bibitem[Cas86]{Cas86} \textsc{Cassels, J.}: { \it Local Fields}, London Mathematical Society Student Texts  {\bf 3} (1986), Cambridge University Press, xiv + 360 pp.
\bibitem[deJ00]{deJ00}  \textsc{de Jong, J.}: {\it A result on formal linearity},  J. of Algebra {\bf 225} (2000), 936--942.
\bibitem[Del80]{Del80} \textsc{Deligne, P.}: {\it La conjecture de Weil II}, Publ. math. I.H.\'E.S. {\bf 42} (1980), 137--252.
\bibitem[Dri01]{Dri01} \textsc{Drinfeld, V.} : {\it On a conjecture of Kashiwara}, Math. Res. Lett. {\bf 8} (2001) no 5-6, 713--728.
\bibitem[Dri12]{Dri12} \textsc{Drinfeld, V.} {\it On a conjecture of Deligne}, Mosc. Math.
  J. {\bf 12} (2012), no. 3, 515--542, 668.
\bibitem[Eke90]{Eke90} \textsc{Ekedahl, T.}: {\it On the adic formalism}, The Grothendieck Festschrift, Vol. II, 197--218, Progr. Math. {\bf 87}, Birkh\"auser Boston, Boston, MA, 1990.
\bibitem[Esn21]{Esn21} \textsc{ Esnault, H.}: {\it  Cohomological dimension in pro-p-towers},  Int. Math. Res. Not. {\bf 2021} 8 (2021), 5757--5765.
\bibitem[EK20]{EK20}  \textsc{ Esnault, H., Kerz, M.}: {\it  Arithmetic subspaces of moduli spaces of rank one local systems},  Cambridge J. of Mathematics {\bf 8} (2020) (3), 453--478.
 \bibitem[FvdP04]{FvdP04} \textsc{ Fresnel, J., van der Put, M.}: {\it Rigid analytic geometry and its applications},  Progress in Mathematics {\bf 218} (2004), 296 p.,  Birkh\"auser Verlag.
\bibitem[GL87]{GL87} \textsc{ Green, M., Lazarsfeld, R.}: {\it  Deformation theory, generic vanishing theorems, and some conjectures of Enriques, Catanese and Beauville, } Invent. math. {\bf 90} (1987), 389--407.
\bibitem[GL91]{GL91} \textsc{ Green, M., Lazarsfeld, R.}: {\it
Higher obstructions to deforming cohomology groups of line bundles},
J. Amer. Math. Soc. {\bf 4} (1991), no. 1, 87--103.
\bibitem[GL96]{GL96} \textsc{Gabber, O., Loeser, F.}: {\it Faisceaux pervers $\ell$-adiques sur un tore}, Duke. Math. J. {\bf 83} (3)  (1996), 1--106.
\bibitem[Jan10]{Jan10} \textsc{Jannsen, U.}: {\it Weights in arithmetic geometry},  Jpn. J. Math.{\bf  5}  (2010), no. 1, 73--102.
\bibitem[KW15]{KW15} \textsc{Kr\"amer, T., Weissauer, R.}: { \it Vanishing theorems for constructible sheaves on abelian varieties},  J. Alg. Geom. {\bf 24} (3) (2015), 531--568.
\bibitem[Laf02]{Laf02} \textsc{Lafforgue, L.}: {\it   Chtoucas de Drinfeld et correspondance de Langlands}, Invent.
math. {\bf 147} (2002), no. 1, 1--241.
\bibitem[Loe96]{Loe96} \textsc{Loeser, F.}: {Faisceaux pervers, transformation de Mellin et d\'eterminants}, M\'emoires de la Soc. Math. de France  {\bf 66} (1996), 105 pp.
\bibitem[Moc07]{Moc07} \textsc{ Mochizuki, T.}: { \it Asymptotic behaviour of tame harmonic bundles and an application to pure twistor $D$-modules,} Part 2, Memoirs of the Am. Math. Soc. Vol. {\bf 185} (2007), 564 pp.
\bibitem[Mum70]{Mum70} \textsc{Mumford, D.}: {\it Abelian varieties}, Tata Institute of Fundamental Research Studies in Mathematics {\bf 5} (1970), 242 pp.
\bibitem[PR04]{PR04} \textsc{Pink, R., R\"ossler, D.}: {\it A conjecture of Beauville and Catanese revisited}, Math. Ann. {\bf 330}  (2004)  2, 293--308.
\bibitem[Sab92]{Sab92} \textsc{Sabbah, C.}: {\it
Lieu des p\^oles d'un syst\`eme holonome d'\'equations aux diff\'erences finies},
Bull. Soc. Math. France {\bf 120} (1992), no. 3, 371--396.
\bibitem[Sab05]{Sab05} \textsc{Sabbah, C.}: {\it Polarizable twistor $\sD$-modules}, Ast\'erisque {\bf 300} (2005), vi+ 208 pp.
\bibitem[Sch15]{Sch15} \textsc{Schnell, C.}: {\it Holonomic $\sD$-modules on abelian varieties}, Publ. math. I.H.\'E.S. {\bf 121} (1) (2015), 1--55.
 \bibitem[Ser58]{Ser58} Serre, J.-P.: {\it Morphismes universels et vari\'et\'e d'Albanese}, S\'eminaire Claude Chevalley {\bf 4} (1958-1959), exp. 10, 1--22.
\bibitem[Sim92]{Sim92}  \textsc{Simpson, C.}: {\it Higgs bundles and local systems}, Publ. math. I.H.\'E.S. {\bf 75} (1992), 5--95.
\bibitem[Sim93]{Sim93} \textsc{Simpson, C.}:  {\it Subspaces of moduli spaces of rank one local systems}, Annales de l'\'E. N. S. 4\`eme s\'erie {\bf 26}(3) (1993), 361--401.
\bibitem[Tat66]{Tat66}  \textsc{Tate, J.}: {\it Endomorphisms of abelian varieties over finite fields}, Invent. math. {\bf 2} (1966), 134--144.
\bibitem[Wei16]{Wei16}\textsc{ Weissauer, R.}:  {\it Vanishing theorems for constructible
    sheaves on abelian varieties over finite fields}, Math. Ann. {\bf 365} (1-2) (2016),
  559--578.
\bibitem[EGAIII]{EGAIII}
  \textsc{ Grothendieck, A., Dieudonn\'e, J.}:
   {\it \'El\'ements de G\'eom\'etrie Alg\'ebrique, III, \'Etude
   cohomologique des faisceaux coh\'erents},
   Publ. Math. I.H.\'E.S. {\bf 11} (1961) and {\bf 17} (1963).
\bibitem[SGA3]{SGA3} \textsc{Grothendieck, A., Demazure, M.}: {\it S\'eminaire de
    G\'eom\'etrie Alg\'ebrique du Bois Marie - 1962--64 - Sch\'emas en groupes}, Lecture
  Notes in Mathematics {\bf 151}, {\bf 152} and {\bf 153} (1970).
\end{thebibliography}
\end{document}